\newtheorem{theorem}{Theorem}[section]
\newtheorem{corollary}[theorem]{Corollary}
\newtheorem{remark}[theorem]{Remark}
\newtheorem{lemma}[theorem]{Lemma}
\newtheorem{proposition}[theorem]{Proposition}
\numberwithin{equation}{section}
\newcommand{\R}{\mathbb R}
\newcommand{\N}{\mathbb N}
\newcommand{\cH}{{\mathcal H}}
 \def\eps{\varepsilon}
\title{Small order limit of fractional Dirichlet sublinear-type problems}
\author{Felipe Angeles\footnote{
Instituto de Matemáticas, Universidad Nacional Autónoma de México, Circuito Exterior, Ciudad Universitaria, 04510 Coyoacán, Ciudad de México, Mexico, \texttt{teojkd@ciencias.unam.mx} 
}\ \ \& Alberto Saldaña\footnote{(\Letter \ Corresponding author) 
Instituto de Matemáticas, Universidad Nacional Autónoma de México, Circuito Exterior, Ciudad Universitaria, 04510 Coyoacán, Ciudad de México, Mexico, \texttt{alberto.saldana@im.unam.mx}}}
\date{}
\begin{document}

\maketitle

\begin{abstract}
    
    We study the asymptotic behavior of solutions to various Dirichlet sublinear-type problems involving the fractional Laplacian when the fractional parameter $s$ tends to zero. Depending on the type on nonlinearity, positive solutions may converge to a characteristic function or to a positive solution of a limit nonlinear problem in terms of the logarithmic Laplacian, that is, the
pseudodifferential operator with Fourier symbol $\ln(|\xi|^2)$. In the case of a logistic-type nonlinearity, our results have the following biological interpretation: in the presence of a toxic boundary, species with reduced mobility have a lower saturation threshold, higher survival rate, and are more homogeneously distributed. As a result of independent interest, we show that sublinear logarithmic problems have a unique least-energy solution, which is bounded and Dini continuous with a log-Hölder modulus of continuity.

\medbreak

\noindent{\bf Keywords:} Logarithmic Laplacian, nonlocal operators, nonlinear eigenvalues, Allen-Cahn.

\noindent{\bf 2020 MSC:} 
35S15 ·  
35B40 ·   
35P30. 
\end{abstract}

\section{Introduction}

Consider a positive solution of a sublinear-type problem such as 
\begin{align*}
    (-\Delta)^{s} u_s = f(u_s)\quad \text{ in }\Omega,\qquad u_s=0\quad \text{ on }\R^N\backslash\Omega,
\end{align*} 
where $s\in(0,1)$, $N\geq 1$, $\Omega\subset \R^N$ is an open bounded Lipschitz set, and $f(u)$ is a sublinear-type nonlinearity such as $f(u)=|u|^{p-2}u$ with $p\in(1,2)$ or a bistable nonlinearity such as $f(u)=ku-|u|^{q-1}u$ with $k>0$ and $q>1$.  Here, $(-\Delta)^{s}$ is the fractional Laplacian of order $2s$ given by
\begin{align*}
(-\Delta)^{s}u(x):=c_{N,s} \emph{p.v.} \int_{\mathbb{R}^{N}}\frac{u(x)-u(y)}{|x-y|^{N+2s}}\, dy,\quad 
c_{N,s}:=s(1-s)\frac{\Gamma(\tfrac{N}{2}+s)4^{s}}{\Gamma(2-s)\pi^{\frac{N}{2}}},    
\end{align*}
and \emph{p.v} stands for the integral in the principal value sense. 

In this paper, we study the asymptotic profile of positive solutions $u_s$ as $s\to 0^+$. This asymptotic analysis has only been done for superlinear problems in \cite{HSS22} for least energy solutions and for linear problems in \cite{CW19,FJW20}. The motivation behind the understanding of these profiles is twofold.  On one hand, the parameter $s$ plays an important role in some models coming from population dynamics \cite{CDV17,PV18}, optimal control \cite{SV17}, approximation of fractional harmonic maps \cite{ABS21}, and fractional image denoising \cite{AB17}. In these models, a small value for the fractional parameter $s$ can yield an optimal choice; for instance, for the population models in \cite{CDV17,PV18}, it can happen that a species survives only for dispersal strategies associated to a small value of $s$ (for more information and references we refer to \cite{HSS22}). Another motivation comes from the understanding of the interesting underlying mathematical structures behind the asymptotic profiles of weak solutions as $s\to 0$.  Indeed, in this paper we show that sublinear and superlinear problems have very different behaviors as $s\to 0^+$ and the challenges to characterize the limits are also distinct. 

We begin by discussing the paradigmatic case of the power nonlinearity.  Let $(s_n)_{n\in\N}\subset (0,1)$ and $(p_{n})_{n\in\N}\subset (1,2)$ be such that $\lim\limits_{n\to \infty}s_n=0$ and $\lim\limits_{n\to \infty}p_n=p\in[1,2]$ and consider the equation 
\begin{align}\label{eq:peq}
    (-\Delta)^{s_n} u_n = |u_n|^{p_{n}-2}u_{n}\quad \text{ in }\Omega,\qquad u_{n}=0\quad \text{ on }\R^N\backslash \Omega.
\end{align} 
Since $p_n\in(1,2)$, the problem \eqref{eq:peq} has a unique positive solution for every $n\in\N$ (see, for instance, \cite[Section 6]{BFMST18}), which can be found by global minimization of an associated energy functional (see Section \ref{notation}). Furthermore, these solutions are uniformly bounded independently of $n$, see Proposition~\ref{prop1} below.  This is one of the advantages of the sublinear regime, since similar uniform bounds for superlinear powers in the small order limit are not known. 

Heuristically, it is easy to see that the asymptotic behavior of the sequence of positive solutions $(u_{n})_{n\in\N}$ is closely related to the limit $p$ of the sequence $(p_n)_{n\in \N}$. Indeed, if $p\in [1,2)$, we are led (at least formally) to the limit equation 
\begin{align}\label{eqpm2}
u=u^{p-1}\quad \text{ in }\Omega,    
\end{align}
 where we have used that $(-\Delta)^s$ goes in some suitable sense to the identity operator as $s\to 0^+$ (see, \emph{e.g.}, \cite[Proposition 4.4]{DNPV12}).  This suggests that the limiting profile of the sequence $(u_n)_{n\in\N}$ must be (piecewisely) constant.  On the other hand, if $p=2$, then the limit equation becomes the trivial identity $u=u$, which does not provide information on the asymptotic profile. In this case, similarly as in \cite{HSS22}, we need to consider a first order expansion in $s$ of the fractional Laplacian $(-\Delta)^s$.

As a consequence of the discussion above, we split our analysis of \eqref{eq:peq} in two cases depending on the limit $p$ of the sequence $p_n$. The following result focuses on the case $p=2$.
\begin{theorem}\label{case2}
Let $(s_n)_{n\in\N}\subset (0,1)$ and $(p_n)_{n\in\N}\subset(1,2)$ be such that
\begin{align}\label{snpn}
\lim\limits_{n\to\infty}s_n=0,\quad \lim\limits_{n\to\infty}p_n=2,\quad \text{ and }\quad \mu:=\lim_{n\to\infty}\frac{2-p_n}{s_n}\in(0,\infty).     
\end{align}
Let $u_{n}$ be a positive solution of \eqref{eq:peq}, then $u_{n}\rightarrow u_{0}$ in $L^{q}(\mathbb{R}^{N})$ as $n\rightarrow\infty$ for all $1\leq q<\infty,$ where $u_0\in\mathbb{H}(\Omega)\cap L^{\infty}(\Omega)\backslash \{0\}$ is the unique nonnegative least energy solution of 
\begin{align}
L_{\Delta}u_{0}=-\mu \ln(|u_{0}|)u_{0}\quad\text{ in }\Omega,\qquad u_{0}=0\quad\text{ on }\mathbb{R}^{N}\setminus\Omega.\label{eq:727}
\end{align}
\end{theorem}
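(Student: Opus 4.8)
The plan is to derive uniform a priori bounds, extract a convergent subsequence, pass to the limit in the weak formulation of \eqref{eq:peq}, and then identify the limit using the uniqueness of the least-energy solution of \eqref{eq:727} established earlier in the paper. Throughout, write $\mu_n:=(2-p_n)/s_n\to\mu$, so that $|t|^{p_n-2}=|t|^{-s_n\mu_n}$, and record the elementary facts $s_n^{-1}(|t|^{p_n-2}-1)\to-\mu\ln|t|$ for $t\neq 0$ and $|t|^2\,\bigl|\,|t|^{p_n-2}-1\,\bigr|\le C(K)\,s_n$ for $|t|\le K$ and $n$ large, which will be used repeatedly.

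\emph{Uniform bounds and compactness.} By Proposition~\ref{prop1}, $\|u_n\|_{L^\infty(\mathbb{R}^N)}\le C_0$ for all $n$, hence $\|u_n\|_{L^2}\le C_0|\Omega|^{1/2}$. Testing \eqref{eq:peq} with $u_n$ gives $\mathcal{E}_{s_n}(u_n,u_n)=\int_\Omega|u_n|^{p_n}$, where $\mathcal{E}_{s_n}$ is the quadratic form of $(-\Delta)^{s_n}$, so $s_n^{-1}(\mathcal{E}_{s_n}(u_n,u_n)-\|u_n\|_{L^2}^2)=\int_\Omega|u_n|^2\,s_n^{-1}(|u_n|^{-s_n\mu_n}-1)\,dx$ is bounded by the elementary estimate. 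Combined with $\|u_n\|_{L^2}$ bounded and the comparison between the Gagliardo forms and the space $\mathbb{H}(\Omega)$ from \cite{CW19}, this yields $\sup_n\|u_n\|_{\mathbb{H}(\Omega)}<\infty$. By the compact embedding $\mathbb{H}(\Omega)\hookrightarrow L^2(\mathbb{R}^N)$ from \cite{CW19} we may pass to a subsequence with $u_n\rightharpoonup u_0$ in $\mathbb{H}(\Omega)$ and $u_n\to u_0$ in $L^2$ and a.e.; interpolating with the uniform $L^\infty$ bound upgrades this to $u_n\to u_0$ in $L^q(\mathbb{R}^N)$ for all $q\in[1,\infty)$, with $u_0\ge 0$ and $u_0\in L^\infty(\Omega)$.

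\emph{Passing to the limit in the equation.} Fix $\varphi\in C_c^\infty(\Omega)$. Using $\mathcal{E}_{s_n}(u_n,\varphi)=\int_\Omega u_n\,(-\Delta)^{s_n}\varphi\,dx$, the weak form of \eqref{eq:peq} becomes $\int_\Omega u_n\,s_n^{-1}\bigl((-\Delta)^{s_n}\varphi-\varphi\bigr)\,dx=\int_\Omega s_n^{-1}(|u_n|^{p_n-2}-1)\,u_n\varphi\,dx$. On the left, the Chen--Weth expansion $s^{-1}\bigl((-\Delta)^{s}\varphi-\varphi\bigr)\to L_\Delta\varphi$ uniformly on $\overline\Omega$ \cite{CW19} and $u_n\to u_0$ in $L^1$ give the limit $\int_\Omega u_0\,L_\Delta\varphi\,dx=\mathcal{E}_L(u_0,\varphi)$, where $\mathcal{E}_L$ denotes the quadratic form of $L_\Delta$. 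On the right, the integrand is uniformly bounded (by the elementary estimate and $\|u_n\|_\infty\le C_0$) and converges a.e. to $-\mu\ln|u_0|\,u_0\varphi$, with the convention $0\ln 0=0$, so dominated convergence gives $-\mu\int_\Omega\ln|u_0|\,u_0\varphi\,dx$. Hence $\mathcal{E}_L(u_0,\varphi)=-\mu\int_\Omega\ln|u_0|\,u_0\varphi\,dx$ for all $\varphi\in C_c^\infty(\Omega)$, i.e.\ $u_0$ is a nonnegative weak solution of \eqref{eq:727}. It remains to rule out $u_0\equiv 0$ and to identify $u_0$. A Taylor expansion as above shows $s_n^{-1}\mathcal{J}_{s_n}(v)\to\mathcal{J}_0(v):=\tfrac12\mathcal{E}_L(v,v)+\tfrac{\mu}{2}\int_\Omega v^2\ln|v|-\tfrac{\mu}{4}\int_\Omega v^2$ for fixed $v\in\mathbb{H}(\Omega)\cap L^\infty(\Omega)$, where $\mathcal{J}_{s}$ is the energy functional of \eqref{eq:peq} and $\mathcal{J}_0$ that of \eqref{eq:727}. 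Choosing $v=t\phi$ with $\phi\in C_c^\infty(\Omega)$, $\phi\ge 0$, $\phi\not\equiv 0$ fixed, one has $\mathcal{J}_0(t\phi)=t^2\bigl(\tfrac{\mu}{2}(\ln t)\|\phi\|_{L^2}^2+C(\phi)\bigr)<0$ for $t>0$ small, since $t^2\ln t$ dominates $t^2$ and $\mu>0$. As $u_n$ minimizes $\mathcal{J}_{s_n}$, $s_n^{-1}\mathcal{J}_{s_n}(u_n)\le s_n^{-1}\mathcal{J}_{s_n}(t\phi)\to\mathcal{J}_0(t\phi)<0$; combining this with the identity $s_n^{-1}\mathcal{J}_{s_n}(u_n)=-\tfrac{\mu_n}{2p_n}\int_\Omega|u_n|^{p_n}$ (from $\mathcal{E}_{s_n}(u_n,u_n)=\int_\Omega|u_n|^{p_n}$) and Hölder's inequality $\int_\Omega|u_n|^{p_n}\le\|u_n\|_{L^2}^{p_n}|\Omega|^{1-p_n/2}$ gives $\|u_n\|_{L^2}\ge c>0$ for $n$ large, hence $\|u_0\|_{L^2}\ge c>0$. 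Thus $u_0\in\mathbb{H}(\Omega)\cap L^\infty(\Omega)\setminus\{0\}$ is a nonnegative solution of \eqref{eq:727}; by the uniqueness of the nonnegative least-energy solution of \eqref{eq:727} --- combined, if that statement does not already cover all nonnegative solutions, with the liminf inequality $\mathcal{J}_0(u_0)\le\liminf_n s_n^{-1}\mathcal{J}_{s_n}(u_n)\le\lim_n s_n^{-1}\mathcal{J}_{s_n}(w)=\mathcal{J}_0(w)$ along $u_n\rightharpoonup u_0$ (using density of $C_c^\infty(\Omega)$ in $\mathbb{H}(\Omega)$ to make $w$ admissible) --- one concludes $u_0=w$. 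Since every subsequence of $(u_n)$ has a further subsequence converging in this way to the same $w$, the full sequence converges.

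\emph{Main obstacle.} The technical heart is the passage between the Gagliardo forms $\mathcal{E}_{s_n}$ and the logarithmic form $\mathcal{E}_L$, needed both for the uniform $\mathbb{H}(\Omega)$-bound and for the weak lower semicontinuity: the far-field parts (double integrals over $\{|x-y|>1\}$) of $\mathcal{E}_{s_n}$ individually scale like $s_n^{-1}$ and must be regrouped with $\|u_n\|_{L^2}^2$ before the desired estimates become available, which is precisely where the quantitative results of \cite{CW19} enter. Relatedly, one must never apply the expansion $(-\Delta)^s=I+sL_\Delta+o(s)$ directly to $u_n$, whose remainder is not controlled uniformly in $n$, but only to the fixed test function $\varphi$ and, pointwise, to the nonlinearity.
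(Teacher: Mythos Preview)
Your proof follows the same overall strategy as the paper: uniform $L^\infty$ bound via Proposition~\ref{prop1}, uniform $\mathbb{H}(\Omega)$ bound, compactness, passage to the limit through the Chen--Weth expansion applied to the test function, and identification of the limit using the uniqueness result of Theorem~\ref{exisleast}. Your nontriviality argument is actually simpler than the paper's: you compare $s_n^{-1}J_{s_n}(u_n)$ directly with $s_n^{-1}J_{s_n}(t\phi)\to J_0(t\phi)<0$ for a fixed small $t$, whereas the paper goes through the eigenvalue asymptotics of Lemma~\ref{bounds} and Theorem~\ref{ebounds} to obtain a lower bound on $\|u_n\|_{s_n}^2$.

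Two steps need sharpening. First, the ``comparison between the Gagliardo forms and $\mathbb{H}(\Omega)$ from \cite{CW19}'' you invoke for the $\mathbb{H}$-bound is not a ready-made lemma there; what is actually required is the pointwise inequality $(|\xi|^{2s}-1)/s\ge\ln(|\xi|^2)$ in Fourier variables, yielding $s^{-1}(\|u\|_s^2-|u|_2^2)\ge\mathcal{E}_L(u,u)\ge\|u\|^2-C|u|_2^2$. This is precisely the content of the paper's Theorem~\ref{boundH}, and Lemma~\ref{smallorder} alone is too weak (it would leave an uncontrolled $s_n^{-1}|u_n|_2^2$).

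Second, your ``liminf inequality $\mathcal{J}_0(u_0)\le\liminf_n s_n^{-1}\mathcal{J}_{s_n}(u_n)$ along $u_n\rightharpoonup u_0$'' is not a weak-lower-semicontinuity statement---the functionals vary with $n$ and no such $\Gamma$-liminf is established. The correct route (which you already have the pieces for, and which is what the paper does) is: since $u_0$ solves \eqref{eq:727} one has $J_0(u_0)=-\tfrac{\mu}{4}|u_0|_2^2$, and your own identity $s_n^{-1}J_{s_n}(u_n)=-\tfrac{\mu_n}{2p_n}\int_\Omega|u_n|^{p_n}$ combined with strong $L^2$ convergence and the uniform $L^\infty$ bound gives $s_n^{-1}J_{s_n}(u_n)\to-\tfrac{\mu}{4}|u_0|_2^2$; then the density comparison with $v_k\in C_c^\infty(\Omega)$ approximating the minimizer (Lemma~\ref{haprox}) closes the argument.
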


Here $L_\Delta$ stands for the logarithmic Laplacian, whose weak solutions belong to a suitable Hilbert space $\mathbb {H}(\Omega)$ (see \eqref{Hdef} below).  The logarithmic Laplacian appears naturally as the first order expansion of the fractional Laplacian; in particular,
\begin{align}
\lim_{s\rightarrow 0^{+}}\left\lvert\frac{(-\Delta)^{s}\varphi-\varphi}{s}-L_{\Delta}\varphi\right\rvert_{p}=0\qquad\mbox{for all}~1<p\leq\infty \text{ and }\varphi\in C^\infty_c(\R^N),\label{eq:726}
\end{align}
where $|\cdot|_p$ denotes the usual $L^p$-norm, see \cite[Theorem 1.1]{CW19}. These type of operators are also related to geometric stable Lévy processes, we refer to \cite{bass94,beghin14,feulefack2021nonlocal,feulefack2021logarithmic,FKT20,MR3626549,vsikic06,JSW20,LW21,CV22} and the references therein for an overview of the different applications that they have (in engineering, finances, physics, mathematics, etc).  For precise definitions and further properties of the logarithmic Laplacian and of the Hilbert space $\mathbb {H}(\Omega)$, we refer to Section \ref{notation} below. We also refer to Remark \ref{rmk:r:1} for a version of Theorem \ref{case2} without sequences (see also Remark \ref{rmk:r:2}).

As a byproduct of Theorem~\ref{case2}, we obtain the following qualitative information on the unique (up to a sign) least energy solution of the limit logarithmic problem.
\begin{theorem}\label{exisleast:intro}
For every $\mu>0$ there is a unique (up to a sign) least energy solution of 
\begin{align}\label{s}
L_{\Delta}v=-\mu\ln(|v|)v\quad\mbox{in}~\Omega,\qquad v=0\quad \text{ on }\R^N\backslash \Omega,
\end{align}
which is a global minimizer of the energy functional
\begin{align}\label{J0def}
J_{0}:\mathbb{H}(\Omega)\rightarrow\mathbb{R},\quad J_{0}(u):=\frac{1}{2}\mathcal{E}_{L}(u,u)+I(u),\quad I(u):=\frac{\mu}{4}\int_{\Omega}u^{2}\left(\ln(u^{2})-1\right)\, dx.
\end{align}
Moreover, $v$ does not change sign and
\begin{align}\label{C0clthm}
0<\sup_{x\in\Omega}|v(x)|\leq (R^{2}e^{\frac{1}{2}-\rho_{N}})^{\frac{1}{\mu}},\qquad \text{where $R:=2\operatorname{diam}(\Omega)$}
\end{align}
and $\rho_N$ is an explicit constant given in \eqref{rhon}. Furthermore, if $\Omega$ satisfies a uniform exterior sphere condition, then $|v|>0$ in $\Omega$, $v\in C(\R^N)$, and there are $\alpha\in (0,1)$ and $C>0$ such that
\begin{align}\label{lhr}
 \sup_{\substack{x,y\in\R^N \\ x\neq y}}\frac{|v(x)-v(y)|}{\ell^\alpha(|x-y|)}<C,\qquad \ell(r):=\frac{1}{|\ln(\min\{r,\tfrac{1}{10}\})|}.
\end{align}
\end{theorem}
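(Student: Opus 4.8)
The plan is to obtain $v$ as a global minimizer of $J_0$ on $\mathbb{H}(\Omega)$ and then extract every assertion from minimality, from a hidden-convexity argument in the spirit of Brezis--Oswald, and from the regularity theory for $L_\Delta$.

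\emph{Step 1: a nonnegative nontrivial minimizer.} First one checks that $J_0$ is well defined, coercive and sequentially weakly lower semicontinuous on $\mathbb{H}(\Omega)$. For the term $I$ this rests on the pointwise inequality $t^2(\ln(t^2)-1)\ge-1$, on the embedding properties of $\mathbb{H}(\Omega)$ (in particular a logarithmic Sobolev inequality ensuring $\int_\Omega u^2\ln_+|u|\,dx<\infty$), on the compact embedding $\mathbb{H}(\Omega)\hookrightarrow L^2(\Omega)$, and on Fatou's lemma; coercivity holds because, for $|u|$ large, the superquadratic term $\tfrac{\mu}{4}u^2\ln(u^2)$ dominates $-\tfrac{\mu}{4}u^2$ and absorbs the bounded zeroth-order defect of the quadratic form $\mathcal{E}_L$. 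Hence $J_0$ attains its infimum at some $v\in\mathbb{H}(\Omega)$. Testing with $t\varphi$, $\varphi\in C_c^\infty(\Omega)\setminus\{0\}$, and letting $t\to0^+$ gives $J_0(t\varphi)=\tfrac{\mu t^2}{4}\ln(t^2)\,\|\varphi\|_{L^2(\Omega)}^2+O(t^2)<0$, so $v\neq0$. Since $\bigl||v|(x)-|v|(y)\bigr|\le|v(x)-v(y)|$ and $I$ depends only on $v^2$, we get $J_0(|v|)\le J_0(v)$, so we may assume $v\ge0$.

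\emph{Step 2: Euler--Lagrange equation and the $L^\infty$ bound.} Differentiating $J_0$ at $v$ along $\varphi\in C_c^\infty(\Omega)$ (legitimate for bounded $\varphi$, with dominating bound coming from $|t\ln(t^2)|\le C(1+t^2)$) shows that $v$ solves $L_\Delta v=-\mu\ln(v)v$ weakly against test functions in $C_c^\infty(\Omega)$. For \eqref{C0clthm} one compares $v$ with the supersolution $w:=M\mathbf{1}_B$, where $B\supset\overline\Omega$ is a ball of radius $R/2=\operatorname{diam}(\Omega)$: using the explicit action of $L_\Delta$ on indicators of balls one computes $L_\Delta\mathbf{1}_B$ on $\Omega$ and finds it smallest at the centre, so that $w$ is a supersolution of the logarithmic problem in $\Omega$ precisely when $M\ge(R^2e^{1/2-\rho_N})^{1/\mu}$; the weak maximum principle for $L_\Delta$ then gives $v\le M$ in $\Omega$, the logarithmic potential $\mu\ln v$ (bounded above, negative where $v$ is small) not obstructing the comparison, a truncation handling the set $\{v\approx0\}$. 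A cruder, self-contained bound also follows from the competitor $\min\{v,M\}$ and minimality; in particular $-\mu\ln(v)v$ becomes bounded, so the weak formulation extends to all $\varphi\in\mathbb{H}(\Omega)$.

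\emph{Step 3: uniqueness up to sign.} On the convex cone $\mathcal{C}:=\{\sigma\ge0:\sqrt\sigma\in\mathbb{H}(\Omega)\}$ set $\widetilde J_0(\sigma):=J_0(\sqrt\sigma)$. Writing $\mathcal{E}_L$ as a Gagliardo-type form with a nonnegative kernel plus a bounded zeroth-order term, $\sigma\mapsto\mathcal{E}_L(\sqrt\sigma,\sqrt\sigma)$ is convex (since $(a,b)\mapsto(\sqrt a-\sqrt b)^2$ is jointly convex on $[0,\infty)^2$ and the zeroth-order part is linear in $\sigma$), while $I(\sqrt\sigma)=\tfrac{\mu}{4}\int_\Omega\sigma(\ln\sigma-1)\,dx$ is \emph{strictly} convex in $\sigma$; hence $\widetilde J_0$ is strictly convex and has at most one minimizer $\sigma_*$. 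From $J_0(u)\ge J_0(|u|)=\widetilde J_0(u^2)\ge\widetilde J_0(\sigma_*)=J_0(\sqrt{\sigma_*})$ one gets $\inf_{\mathbb{H}(\Omega)}J_0=J_0(\sqrt{\sigma_*})$, attained exactly at $\pm\sqrt{\sigma_*}$. Any least-energy solution $w$ has $J_0(w)$ equal to this infimum (it is a nontrivial critical point, so $J_0(w)\ge\inf J_0=J_0(\sqrt{\sigma_*})$, and $\sqrt{\sigma_*}$ is a nontrivial critical point realizing the infimum), hence $w$ is a global minimizer; the forced identity $\mathcal{E}_L(|w|,|w|)=\mathcal{E}_L(w,w)$, together with positivity of the integration kernel of the Gagliardo form on $\Omega\times\Omega$, forces $w$ to be sign-definite, so $w=\pm v$ with $v:=\sqrt{\sigma_*}$. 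This yields uniqueness up to sign, that $v$ is a global minimizer, and (by Step 1) that $v$ does not change sign.

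\emph{Step 4: regularity, and the main difficulty.} Now $v\in L^\infty(\Omega)$ solves $L_\Delta v=g$ with $g:=-\mu\ln(v)v\in L^\infty(\Omega)$ and $v=0$ on $\R^N\setminus\Omega$. Since $-\mu\ln(v)v\ge-\mu\ln(\|v\|_{L^\infty(\Omega)})\,v$ wherever $v>0$, the strong maximum principle for $L_\Delta$ gives $v>0$ in $\Omega$; interior regularity for $L_\Delta$ with bounded right-hand side gives local continuity of $v$ with a log-Hölder modulus; and, under the uniform exterior sphere condition, a barrier built from the behaviour of $L_\Delta$ on the complement of a ball touching $\partial\Omega$ gives continuity up to the boundary with the same modulus. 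Combining interior and boundary estimates yields $v\in C(\R^N)$ and \eqref{lhr}. The two steps carrying the real weight are the sharp $L^\infty$ bound of Step 2 --- producing the exact constant $(R^2e^{1/2-\rho_N})^{1/\mu}$ requires the precise value of $L_\Delta$ on indicators of balls and a comparison tolerating the logarithmic potential degenerating near $\partial\Omega$ --- and the global log-Hölder estimate of Step 4, resting on a barrier construction valid under only the exterior sphere condition; Steps 1 and 3 are comparatively standard.
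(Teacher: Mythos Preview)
Your Steps 1 and 3 (existence of a nontrivial nonnegative minimizer by coercivity and weak lower semicontinuity, and uniqueness up to sign by hidden convexity in the variable $\sigma=u^2$) are essentially the paper's approach (Lemma~\ref{lem:c} and Theorem~\ref{exisleast}), and Step~4 correctly defers the log-H\"older regularity to the existing theory for $L_\Delta$ with bounded right-hand side.

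The genuine gap is Step~2, the sharp $L^\infty$ bound \eqref{C0clthm}. The paper does \emph{not} obtain this by a direct barrier; it is inherited from the fractional approximation. Concretely, Proposition~\ref{prop1} (via Lemma~\ref{reg:lem}) gives $|u_n|_\infty\le (R^2e^{1/2-\rho_N})^{1/\mu}+o(1)$ for the positive solutions $u_n$ of \eqref{eq:peq}, and the proof of Theorem~\ref{case2} shows $u_n\to u_0$ in $L^2(\Omega)$ with $u_0$ equal (by uniqueness) to the least-energy solution $v$ of \eqref{s}; the bound on $v$ then follows by passing to the limit. The authors explicitly flag this: ``Here the asymptotic analysis done in Theorem~\ref{case2} is essential, since it is not clear how to obtain a bound as in \eqref{C0clthm} directly from the equation \eqref{s}.'' Your proposed barrier $w=M\mathbf 1_B$ with $B$ a ball of radius $R/2$ is therefore a genuinely different route, and the details do not check: at the centre of $B_r$ (for $r\ge 1$) one computes $L_\Delta\mathbf 1_{B_r}(0)=\rho_N-\ln(r^2)$ (using $c_N|\mathbb S^{N-1}|=2$), so the supersolution condition $L_\Delta\mathbf 1_B\ge -\mu\ln M$ at the centre reads $\mu\ln M\ge \ln((R/2)^2)-\rho_N$, i.e.\ $M\ge((R/2)^2e^{-\rho_N})^{1/\mu}$, \emph{not} $(R^2e^{1/2-\rho_N})^{1/\mu}$. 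The extra factor $e^{1/2}$ in the paper's constant arises from the Taylor expansion of the Riesz-potential constant in Lemma~\ref{reg:lem}, not from any indicator computation. Beyond the constant, you have also hand-waved the nonlinear comparison (why does $L_\Delta w\ge -\mu\ln(w)w$, $L_\Delta v=-\mu\ln(v)v$, $w\ge v$ on $\R^N\setminus\Omega$ force $v\le w$ without an a~priori bound on $v$?), the fact that the centre of $B$ need not lie in $\Omega$, and the case of small $R$; and the fallback ``competitor $\min\{v,M\}$'' does not obviously work since $\mathcal E_L$ is not sign-definite.

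A smaller issue: your strict-positivity argument (``$-\mu\ln(v)v\ge -\mu\ln(\|v\|_\infty)v$, so the strong maximum principle applies'') leads to $L_\Delta v+cv\ge 0$ with $c=\mu\ln\|v\|_\infty$, which could exceed the first eigenvalue of $L_\Delta$ on $\Omega$ and hence fall outside the range of the maximum principle. The paper's Lemma~\ref{strong:mp} avoids this by working on a small ball $B_r(x_0)$ around a hypothetical zero, where continuity forces $v$ small, hence $-\mu\ln(v)v\ge 0$ there, and then uses a barrier built from the torsion function of $L_\Delta$ on $B_r(x_0)$.
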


Theorems \ref{case2} and \ref{exisleast:intro} are the sublinear counterparts of \cite[Theorem 1.1]{HSS22} and \cite[Theorem 1.2]{HSS22}.  A crucial difference between these results is the sign of $\frac{p_n-2}{s_n}$, which is positive for superlinear problems and negative in the sublinear regime.  This means that, for logarithmic problems, a notion of sublinearity is encoded in the negative sign in front of the coefficient $\mu$ in \eqref{s}.  This sign has several consequences on the asymptotic analysis and on the qualitative properties of the limiting profile.  One key feature in the sublinear case is that the sequence of positive solutions of \eqref{eq:peq} is uniformly bounded (see Proposition~\ref{prop1}). This boundedness is then inherited to the limiting profile, which is the first step to characterize further regularity properties (observe that \eqref{lhr} is a lower-order log-Hölder estimate, see Remark~\ref{reg:rmk}). Here the asymptotic analysis done in Theorem~\ref{case2} is essential, since it is not clear how to obtain a bound as in \eqref{C0clthm} directly from the equation \eqref{s}. Another important difference is the uniqueness of positive solutions, which does not hold in general for superlinear fractional problems (see, for example, \cite[Theorem 1.2]{DRS17} or \cite[Remark 2,11]{DIS22} for a multiplicity result). An $L^\infty$-bound and the uniqueness properties of solutions are not known for logarithmic problems in the \textquotedblleft superlinear regime" ($\mu<0$), see \cite{HSS22}.

Furthermore, methodologically, the treatment of sublinear problems requires a different approach with respect to its superlinear counterpart; for example, \cite[Theorems 1.1 and 1.2]{HSS22} are strongly based on Sobolev logarithmic inequalities; but these do not play any role in our asymptotic analysis.  Instead, we use Fourier transforms, sharp regularity bounds, and direct integral estimates to find a uniform bound of the solutions of \eqref{eq:peq} in the norm of $\mathbb H(\Omega)$ (see Theorem~\ref{boundH}).  This bound together with the compact embedding $\mathbb H(\Omega) \hookrightarrow L^2(\Omega)$ gives the main compactness argument to characterize the limiting profile. We also mention that the uniqueness property stated in Theorem \ref{exisleast:intro} relies strongly on the fact that $\mu>0$ (see the proof of Theorem \ref{exisleast}).  If $\mu<0$, then uniqueness or multiplicity results for \eqref{s} are not known. 

These arguments, however, cannot be used if the limit of the sequence of powers $p_n$ is strictly less than $2$, because in that case the logarithmic Laplacian does not relate in any way to the limit equation \eqref{eqpm2}. Our next result summarizes our asymptotic analysis for \eqref{eq:peq} when $p\in[1,2)$.
\begin{theorem}\label{main:power}
Let $(s_n)_{n\in\N}\subset (0,1)$ and $(p_{n})_{n\in\N}\subset (1,2)$ be such that $\lim\limits_{n\to \infty}s_n=0$ and $\lim\limits_{n\to \infty}p_n=p\in[1,2)$, and let $u_n$ be the unique positive solution of \eqref{eq:peq}.  Then,
\begin{align*}
u_{n}\rightarrow 1\qquad \text{in $L^{q}(\Omega)$ as $n\rightarrow\infty$ for any $1\leq q<\infty$.}
\end{align*}
\end{theorem}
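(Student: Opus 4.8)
The plan is to establish the convergence $u_n\to 1$ in $L^q(\Omega)$ through a combination of uniform bounds, equi-integrability/compactness, and identification of the limit via the pointwise limit equation.

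First I would recall from Proposition~\ref{prop1} that the positive solutions $u_n$ are uniformly bounded in $L^\infty(\Omega)$, say $\|u_n\|_{L^\infty(\Omega)}\le C$ independently of $n$. This immediately reduces the problem to proving convergence in $L^1(\Omega)$ (or $L^2(\Omega)$), since uniform $L^\infty$ bounds upgrade $L^1$ convergence to $L^q$ convergence for all finite $q$ by interpolation. Next, I would test the equation against $u_n$ itself to get the energy identity
\begin{align*}
\frac{c_{N,s_n}}{2}\int_{\R^N}\int_{\R^N}\frac{|u_n(x)-u_n(y)|^2}{|x-y|^{N+2s_n}}\,dx\,dy=\int_\Omega u_n^{p_n}\,dx,
\end{align*}
and observe that the right-hand side is uniformly bounded (by $C^{p_n}|\Omega|$), while the left-hand side controls the Gagliardo-type seminorm weighted by $c_{N,s_n}$. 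Since $c_{N,s_n}\sim s_n$ as $s_n\to 0$, this gives that the $s_n$-Gagliardo seminorm of $u_n$ is $O(1/s_n)$, which by the standard estimate $s(1-s)[\,u\,]_{W^{s,2}}^2 \gtrsim$ (fractional energy) combined with the Bourgain–Brezis–Mironescu / Maz'ya–Shaposhnikova asymptotics does not by itself give compactness — so instead I would rely on the approach of extracting an $L^2(\Omega)$-weakly convergent subsequence $u_n\rightharpoonup u_0$ (possible by the $L^\infty$ bound) and then improving to strong convergence.

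For the strong convergence and identification of the limit, the cleanest route is: since $0\le u_n\le C$, by weak-$*$ compactness in $L^\infty$ and a diagonal argument, along a subsequence $u_n\to u_0$ weakly in every $L^q$. To identify $u_0$, I would pass to the limit in the weak formulation tested against a fixed $\varphi\in C_c^\infty(\Omega)$: using \eqref{eq:726}-type facts that $s^{-1}((-\Delta)^s-\mathrm{Id})\varphi \to L_\Delta\varphi$ but more crudely that $(-\Delta)^{s_n}\varphi\to\varphi$ uniformly as $s_n\to 0$ (see \cite[Proposition 4.4]{DNPV12}), the left side $\int u_n (-\Delta)^{s_n}\varphi \to \int u_0\varphi$, while the right side $\int u_n^{p_n}\varphi\to\int u_0^{p}\varphi$ provided we already have strong $L^1$ convergence (or at least a.e.\ convergence plus domination). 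To get a.e.\ convergence I would use a compactness argument: the bound $c_{N,s_n}[u_n]^2_{s_n}\le C$ gives, via the characterization that such sequences are relatively compact in $L^2_{loc}$ when $s_n\to s_*>0$, but here $s_n\to 0$, so instead I would use that $u_n$ solves an equation with uniformly bounded right-hand side and invoke uniform Hölder/Besov regularity estimates for $(-\Delta)^{s_n}$ with constants that degenerate controllably; alternatively, the most robust option is to use the uniform bound in the space $\mathbb H(\Omega)$ established for a related sequence (Theorem~\ref{boundH}) adapted to this regime, giving compactness in $L^2(\Omega)$ via $\mathbb H(\Omega)\hookrightarrow\hookrightarrow L^2(\Omega)$. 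Once $u_n\to u_0$ strongly in $L^2$, hence a.e.\ along a subsequence, the limit equation reads $u_0=u_0^{p-1}$ a.e.\ in $\Omega$ (when $p\in(1,2)$) with $0\le u_0\le C$; the only nonnegative solutions are $u_0\in\{0,1\}$ pointwise, so $u_0=\chi_E$ for some measurable $E\subseteq\Omega$.

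The main obstacle — and the crux of the argument — is ruling out $E\subsetneq\Omega$, i.e.\ showing the limit is the full characteristic function $\chi_\Omega\equiv 1$ rather than $0$ or an intermediate set. I would attack this via the energy: the minimal energy $c_n:=J_{s_n}(u_n)$ of the solution equals $(\tfrac12-\tfrac1{p_n})\int_\Omega u_n^{p_n}\,dx<0$, and by comparison with an explicit test function (e.g.\ a fixed bump, or the constant-like competitor) one shows $c_n$ is bounded away from $0$ and in fact $\int_\Omega u_n^{p_n}\,dx$ is bounded below by a positive constant, which forces $|E|>0$. To get $|E|=|\Omega|$, I would use the minimality more carefully: if $u_0=\chi_E$ with $|E|<|\Omega|$, one can construct a competitor with strictly lower limiting energy (since the nonlocal energy $c_{N,s_n}[u_n]^2_{s_n}$ of a characteristic function with a ``boundary'' inside $\Omega$ carries extra positive cost in the $s_n\to 0$ limit — quantitatively, $c_{N,s_n}[\chi_E]^2_{s_n}\to (1-|E|/|\text{something}|)\cdot$ measure-type term via the Maz'ya–Shaposhnikova limit $\lim_{s\to 0}s[u]^2_{W^{s,2}(\R^N)}=\tfrac{2\omega_{N-1}}{N}\|u\|_{L^2}^2$), making $\chi_\Omega$ strictly better than any proper $\chi_E$. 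Carefully, one compares $\lim_n (\tfrac12 c_{N,s_n}[u_n]_{s_n}^2 - \tfrac1{p_n}\int u_n^{p_n})$: the second term tends to $-|E|$ (since $p_n\to p$, $u_n^{p_n}\to \chi_E$), while the first tends to a constant times $\|\chi_E\|_{L^2(\R^N)}^2=|E|$ by the Maz'ya–Shaposhnikova formula, so the limit energy is proportional to $(c_* -1)|E|$ with $c_*=\tfrac{\omega_{N-1}}{N}>0$ possibly less than $1$, giving that energy is minimized by taking $|E|$ as large as possible, i.e.\ $E=\Omega$; and matching this with $\lim_n c_n$ computed from any competitor converging to $\chi_\Omega$ pins down $u_0=1$. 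Since the limit is uniquely identified as $1$ independently of the subsequence, the whole sequence converges, completing the proof.
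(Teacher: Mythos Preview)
Your proposal has a genuine gap at the compactness step, which you yourself flag as the ``main obstacle'' but do not actually resolve. None of the three routes you sketch works here: the Bourgain--Brezis--Mironescu / Maz'ya--Shaposhnikova asymptotics give no compactness as $s_n\to 0$ (as you note); uniform H\"older estimates for $(-\Delta)^{s_n}$ degenerate as $s_n\to 0$ and cannot yield a.e.\ convergence; and the suggestion to invoke Theorem~\ref{boundH} ``adapted to this regime'' fails outright, since that bound in $\mathbb{H}(\Omega)$ relies essentially on $p_n\to 2$ with $\mu=\lim\frac{2-p_n}{s_n}\in(0,\infty)$ (the logarithmic Laplacian enters precisely through that limit and has no relation to the equation when $p<2$; the paper says this explicitly in the introduction). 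Without strong or a.e.\ convergence you cannot pass to the limit in $\int u_n^{p_n-1}\varphi$, so you never reach the identity $u_0=u_0^{p-1}$, and the rest of your argument (identifying $u_0=\chi_E$ and then excluding $E\subsetneq\Omega$) is built on sand. Your energy comparison at the end is also muddled: with the paper's normalization one has $\|\phi\|_{s_n}^2\to|\phi|_2^2$ for fixed $\phi$, not $c_*|\phi|_2^2$ with some $c_*\neq 1$, and for the moving sequence $u_n$ you would use the equation ($\|u_n\|_{s_n}^2=|u_n|_{p_n}^{p_n}$) rather than Maz'ya--Shaposhnikova.

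The paper avoids compactness entirely by a different device: it passes to a normalized auxiliary problem $\Lambda_n=\inf\{\|v\|_{s_n}^2:\ |v|_{p_n}=1\}$ on the rescaled domain $\Omega_\lambda$ with $|\Omega_\lambda|=1$, shows $\Lambda_n\to\Lambda_0=1$ (Lemma~\ref{lem:lam0}, Proposition~\ref{prop:Lamn}), and then proves directly that the normalized minimizers $v_n$ converge to $1$ in $L^2(\Omega_\lambda)$ via an elementary Young-inequality trick: writing $1=|v_n|_{p_n}^{p_n}\le (p_n-1)|v_n|_2^2+(2-p_n)|v_n|_1$ and using $|v_n|_2^2\to 1$, any defect $\int|v_n-1|^2\ge\delta$ forces $\int v_n\le 1-\delta/2+o(1)$ and hence a contradiction since $p<2$. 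The original solutions $u_n$ are then recovered by rescaling. The key idea you are missing is this normalization, which turns the problem into one where the limit is pinned down by an algebraic inequality rather than by compactness.
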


The main difficulty in showing Theorem~\ref{main:power} comes from the absolute lack of compactness tools.  Indeed, as $n\to \infty$, the Sobolev norm $\|\cdot\|_{s_n}$ converges to the $L^2-$norm $|\cdot|_2$ (see, \emph{e.g.}, \cite[Corollary 3]{BBM01}), and therefore it is not possible to use any type of Sobolev embedding. Similarly, all Hölder regularity estimates for $u_n$ degenerate in the limit $s\to 0^+$. Furthermore, since the logarithmic Laplacian does not relate to the limit equation \eqref{eqpm2}, the compactness properties of the space $\mathbb H(\Omega)$ cannot be used. However, since, heuristically, the limit equation is given by \eqref{eqpm2}, it is easy to guess that the limiting profile must be the characteristic function of the set $\Omega$.  As a consequence, this asymptotic analysis is the opposite of that of Theorem~\ref{case2}, since we \textquotedblleft know" a priori the limiting profile, but we do not have any compact embedding at our disposal.  This requires a new approach.

To show Theorem~\ref{main:power}, we use an auxiliary nonlinear eigenvalue problem. To be more precise, consider
\begin{align*}
    \Lambda_n:=\inf\{\|v\|^2_{s_n}\::\: v\in \cH^s_0(\Omega),\ |v|_{p_n}=1\},
\end{align*}
where $\cH^s_0(\Omega)$ is the homogeneous fractional Sobolev space given by 
\begin{align*}
\cH^{s}_{0}(\Omega):=\left\lbrace u \in H^{s}(\mathbb{R}^{N}):u=0~\mbox{on}~\mathbb{R}^{N}\setminus\Omega\right\rbrace
\end{align*}
and 
\begin{align}\label{sf}
\|u\|_{s_n}:=\left(c_{N,s_n}\int_{\R^N}\int_{\R^N}\frac{|u(x)-u(y)|^2}{|x-y|^{N+2s_n}}\, dx\, dy\right)^\frac{1}{2},\qquad |u|_{p_n}:=\left(\int_{\R^N}|u|^{p_n}\, dx\right)^\frac{1}{p_n}.
\end{align}

 A minimizer of $\Lambda_n$ is (after a suitable rescaling) a solution of \eqref{eq:peq}, but the $L^{p_n}$-normalization will turn out to be a useful tool in the asymptotic analysis. Indeed, we show that $(\Lambda_{n})_{n\in \N}$ converges to $\Lambda_{0}>0$ given by 
 \begin{align*}
    \Lambda_0:=\inf\left\{\int_{\Omega}|v|^{2}\, dx\::\: v\in L^2(\Omega),\ \int_{\Omega}|v|^{p}\, dx=1\right\}>0.
\end{align*}
Note that this variational problem does not have any kind of differential operator and a minimizer is achieved at a characteristic function of $\Omega$ (see Lemma~\ref{lem:lam0}). From this fact, we derive that the minimizers $v_n$ of $\Lambda_{n}$ converge to 1 in $L^{2}(\Omega)$. Finally, we use that the solutions $u_n$ of \eqref{eq:peq} are related to $v_n$ by a direct rescaling to obtain the convergence of $u_n$.

Theorems \ref{case2} and \ref{main:power} show that sublinear problems behave very differently than their superlinear counterparts. Moreover, a link between the cases $p<2$ and $p=2$ resides in the assumption $\mu\in(0,\infty)$ required in Theorem~\ref{case2}.  If $\mu=0$, then the limit problem cannot be characterized by the logarithmic Laplacian. To analyze this case, it would be necessary to consider a second (or higher) order expansion of the fractional Laplacian in the parameter $s$.

In the last result we present here, we show that, with some adjustments, a similar strategy can also be used to characterize the limiting profile of other sublinear-type fractional problems.  For instance, consider the nonlinearity $f(u)=ku-u^{p}$ for $k>1$, $p>1$, and $u\geq0$. This nonlinearity is widely studied in the literature; in particular, $p=2$ (the logistic nonlinearity) is used in ecology in the study of population dynamics, where $k$ is a birth rate and $-u^2$ is called a concentration or saturation term (see, \emph{e.g.}, \cite{CDV17,PV18} and the references therein); and $p=3$ (the Allen-Cahn nonlinearity) is used in the study of phase transitions in material sciences (see, \emph{e.g.}, \cite{MSW19} and the references therein).  In this regard, we have the following.

\begin{theorem}\label{thm:f}
Let $k>1$ and $p>1$. There is $s_0=s_0(\Omega,k)\in(0,1)$ so that, for $s\in(0,s_0)$, there is a unique positive solution $u_s\in \cH^{s}_0(\Omega)\cap L^{p+1}(\Omega)$ of
\begin{align}\label{f:p}
(-\Delta)^{s}u_{s}=k u_s - u_s^{p}\text{ in }\Omega, \qquad u_s=0\quad \text{ in }\R^N\backslash\Omega.
\end{align}
Moreover, $u_{s}\to (k-1)^\frac{1}{p-1}$ in $L^{q}(\Omega)$  as $s\rightarrow 0^+$ for every $1\leq q<\infty$.
\end{theorem}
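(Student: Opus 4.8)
The plan is to follow the same three-stage strategy used for Theorem~\ref{main:power}: (i) establish existence, uniqueness, and a uniform $L^\infty$-bound for $u_s$ when $s$ is small; (ii) reformulate the convergence as a variational problem that degenerates, as $s\to 0^+$, into one with no differential operator; and (iii) identify the limiting profile as the constant $(k-1)^{1/(p-1)}$. For stage (i), existence and uniqueness of a positive solution of \eqref{f:p} should follow from the sublinear-type structure of $f(u)=ku-u^p$: the associated energy functional $u\mapsto \frac12\|u\|_s^2-\frac{k}{2}|u|_2^2+\frac{1}{p+1}|u|_{p+1}^{p+1}$ is, for $s$ small, coercive and weakly lower semicontinuous on $\cH^s_0(\Omega)$ because the first eigenvalue $\lambda_1^s(\Omega)$ of $(-\Delta)^s$ satisfies $\lambda_1^s(\Omega)\to 1$ as $s\to 0^+$ (this is where $s_0=s_0(\Omega,k)$ enters: we need $\lambda_1^s(\Omega)>1$... more precisely we want $k$ below a threshold tied to $\lambda_1^s$, but since $k>1$ is fixed we instead exploit that the saturation term $-u^p$ dominates for large $u$, so coercivity holds regardless; positivity comes from replacing $u$ by $|u|$ and the strong maximum principle; uniqueness from the strict monotonicity of $t\mapsto f(t)/t$ on $(0,\infty)$ combined with a standard Brezis--Oswald-type argument). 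The uniform bound $0<u_s\le (k-1+\eps)^{1/(p-1)}$ should come from testing the equation against $(u_s-M)^+$ with $M=(k-1)^{1/(p-1)}$, or alternatively by comparison with the constant supersolution: on the region $\{u_s>M\}$ one has $f(u_s)=u_s(k-u_s^{p-1})<0$, and a maximum-principle argument for $(-\Delta)^s$ forces this region to be empty, giving $u_s\le M$ uniformly in $s$; a matching uniform lower bound $u_s\ge c>0$ on compact subsets, or in an averaged sense, is obtained by comparison from below with small multiples of the first eigenfunction, again using $\lambda_1^s\to1<k$.

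For stages (ii)--(iii), I would mirror the nonlinear-eigenvalue device of the excerpt. Set $w_s:=u_s$ and note that $u_s$ is the unique positive critical point of the above functional; testing against $u_s$ gives the identity $\|u_s\|_s^2=k|u_s|_2^2-|u_s|_{p+1}^{p+1}$. Since $\|\cdot\|_s\to|\cdot|_2$ on $\cH^s_0$ in the sense that $\|v\|_s^2\to |v|_2^2$ for fixed $v$ (see \cite[Corollary~3]{BBM01}) and, crucially, $\|v\|_s^2\ge |v|_2^2$ is false in general but the uniform $L^\infty$-bound plus the equation gives a uniform bound on $\|u_s\|_s$, a subsequence of $u_s$ converges weakly in some $\cH^{s_n}_0$ — but since the spaces vary, the cleaner route is: from $0\le u_s\le M$ and $|\Omega|<\infty$, $u_s$ is bounded in every $L^q(\Omega)$, so along a subsequence $u_{s_n}\rightharpoonup u_0$ weakly in $L^2(\Omega)$ (and weakly-$*$ in $L^\infty$). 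Passing to the limit in the weak formulation tested against a fixed $\varphi\in C_c^\infty(\Omega)$, using \eqref{eq:726}-type convergence $(-\Delta)^{s}\varphi\to\varphi$ and the uniform bound to control the nonlinear terms (here $u_s^p\to u_0^p$ needs an upgrade from weak to strong $L^2$ convergence), we would obtain $u_0=k u_0-u_0^p$ a.e.\ in $\Omega$, i.e.\ $u_0(u_0^{p-1}-(k-1))=0$, whence $u_0\in\{0,(k-1)^{1/(p-1)}\}$ pointwise. The uniform \emph{lower} bound rules out $u_0\equiv 0$ on any set of positive measure, forcing $u_0\equiv(k-1)^{1/(p-1)}$; since the limit is the same for every subsequence, the full family converges, and the convergence can be upgraded from weak-$L^2$ to strong $L^q$ for all $q<\infty$ because $|u_s|_2^2\to|u_0|_2^2$ (norm convergence plus weak convergence in a Hilbert space gives strong convergence) combined with the uniform $L^\infty$-bound and interpolation.

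The main obstacle I expect is \textbf{promoting weak $L^2$-convergence of $u_s$ to strong convergence} — that is, closing the loop between the a priori bound and the passage to the limit in the nonlinear term $u_s^p$. Unlike Theorem~\ref{case2}, there is no compact embedding available (the Sobolev norm degenerates to the $L^2$-norm, exactly the difficulty flagged after Theorem~\ref{main:power}), so one cannot extract strong convergence from $\cH^{s_n}_0$-boundedness. The way around this, as in the proof of Theorem~\ref{main:power}, is to run the argument at the level of the normalized nonlinear eigenvalue problem rather than the PDE directly: define $\Lambda_s:=\inf\{\|v\|_s^2-k|v|_2^2+\tfrac{2}{p+1}|v|_{p+1}^{p+1}\}$ over a suitable normalization (or better, compare the energy levels $J_s(u_s)$ with the limiting energy $J_0(t):=\tfrac12 t^2-\tfrac{k}{2}t^2+\tfrac{1}{p+1}t^{p+1}$ evaluated on constants), show $\liminf$ and $\limsup$ of the energies sandwich the constant-function value using $\|1_{\Omega_\delta}\|_s\to|1_{\Omega_\delta}|_2$ for inner approximations, and conclude that the energy deficit $\|u_s\|_s^2-|u_s|_2^2\to0$, which then yields $|u_{s_n}|_2\to|u_0|_2$ and hence strong convergence. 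A secondary technical point is pinning down the threshold $s_0(\Omega,k)$: it is exactly the largest $s_0$ such that $k<$ (some quantity comparable to, but because of the saturation term not necessarily equal to) $\lambda_1^s(\Omega)^{-1}$ fails to obstruct — in fact $s_0$ is only needed so that the unique positive solution exists and the comparison functions are available, and since $\lambda_1^s(\Omega)\to1$ as $s\to0^+$ while $k>1$, for all small $s$ one has $k>\lambda_1^s(\Omega)$, which is precisely the Brezis--Oswald existence condition; so $s_0$ is the infimum of $s$ for which $k>\lambda_1^s(\Omega)$ still holds, a monotonicity argument in $s$.
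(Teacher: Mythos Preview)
Your overall strategy is sound and genuinely different from the paper's, but there is a gap at the point you yourself flag, and a minor computational slip in the $L^\infty$ bound.

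\medskip
\textbf{The $L^\infty$ bound.} The claim that $f(u_s)=u_s(k-u_s^{p-1})<0$ on $\{u_s>M\}$ with $M=(k-1)^{1/(p-1)}$ is wrong: $u_s>M$ only gives $u_s^{p-1}>k-1$, not $u_s^{p-1}>k$. The maximum-principle argument yields $u_s\le k^{1/(p-1)}$, which is still a perfectly good uniform bound (this is exactly what the paper proves in Proposition~\ref{infbound}).

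\medskip
\textbf{The gap.} Your chain ``energy sandwich $\Rightarrow$ $\|u_s\|_s^2-|u_s|_2^2\to0$ $\Rightarrow$ $|u_{s_n}|_2\to|u_0|_2$ $\Rightarrow$ strong convergence'' is circular as written: to know $|u_0|_2$ you need to know $u_0=(k-1)^{1/(p-1)}$, which requires passing the nonlinear term $u_s^p$ to the limit, which requires strong convergence. The energy sandwich \emph{is} the right tool, but the correct way to close it is not via weak-plus-norm convergence. Rather, the upper bound $J_s(u_s)\le J_0(M\chi_\Omega)+o(1)$ together with $\|u_s\|_s^2\ge\lambda_{1,s}|u_s|_2^2=(1+o(1))|u_s|_2^2$ forces $\int_\Omega h(u_s)\,dx\to0$, where $h(t):=\tfrac{1-k}{2}(t^2-M^2)+\tfrac{1}{p+1}(t^{p+1}-M^{p+1})\ge0$ vanishes only at $t=M$. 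This pointwise coercivity of the energy density (not the Hilbert-space trick) is what gives $u_s\to M$ in measure, and then in $L^q$ by the uniform bound. You have not supplied this step.

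\medskip
\textbf{Comparison with the paper.} The paper takes a different route (Theorem~\ref{ac1convergence}). It introduces an auxiliary constrained minimization $\Theta_s$ with an $L^2$-normalization $\varepsilon|v|_2^2=|\Omega|$ depending on a \emph{free parameter} $\varepsilon>0$; because $|v_n|_2$ is fixed a priori, the energy sandwich immediately gives $|v_n|_r^r\to|\Omega|\varepsilon^{-r/2}$, and then weak $L^{r/2}$-convergence of $v_n^2$ plus norm convergence yields strong convergence of $v_n$ without any circularity. The price is that the constrained minimizer $v_n$ is \emph{not} a rescaling of the PDE solution $u_n$ (the functional has inhomogeneous terms), so the paper needs an extra stability argument: it shows the linearized eigenvalue $\mu_n$ in \eqref{ss} vanishes, and uses this to prove $|u_n-v_n|_2\to0$. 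The free parameter $\varepsilon$ is then chosen a posteriori so that the Lagrange multiplier matches $k$. Your direct energy-density approach, once the gap above is filled, would be shorter and avoid the stability step entirely; the paper's approach has the advantage of separating the compactness issue (handled at the level of $v_n$) from the identification of $u_n$.
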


This result has an interesting biological interpretation in terms of population dynamics (at equilibrium): \emph{in the presence of a toxic boundary, species with limited mobility have a lower saturation threshold, higher survival rate, and are more homogeneously distributed}. Indeed, to fix ideas consider $p=2$, $k=2$, let $u_n$ represent the population density of a species, $\Omega=B_R(0)$ be a ball of radius $R>0$, and let $s$ be a parameter describing a diffusion strategy.  Because the nonlinearity $2u-u^2$ has a concentration term, the population density $u_s$ is bounded by 2 (see Proposition~\ref{infbound}). This bound is optimal, in the sense that $u_s$ has values arbitrarily close to 2 as $R\to\infty$ (a heuristic way to see this, is to consider the rescaled equation $R^{-2s}(-\Delta)^{s} v_s = 2v_s-v_s^2$ in $B_1(0)$, with $v_s(x)=u_s(Rx)$, then, letting $R\to \infty$ yields the limit equation $0=2v-v^2$ which implies $v=2$). However, Theorem~\ref{thm:f} yields that $u_s\to 1$ as $s\to 0^+$, independently of $R>0$. This shows that $u_s$ grows only half as much as more dynamical species in large domains for $s$ sufficiently small.  On the other hand, the Dirichlet boundary conditions represent a toxic boundary, which in small domains can be deadly for the species; in fact, for every $s\in(0,1)$ fixed, there is $R>0$ small such that the only solution of \eqref{f:p} is $u\equiv 0$. But again, Theorem~\ref{thm:f} shows that almost static populations thrive even in small domains.  This is consistent with the results and interpretations from \cite{CDV17,PV18}.

Theorem~\ref{thm:f} is a particular case of a slightly more general result, Theorem~\ref{ac1convergence} in Section \ref{gen:sec}.  The proof of Theorem~\ref{thm:f} follows a similar strategy as in Theorem~\ref{main:power}, we begin by considering a nonlinear eigenvalue problem given by
\begin{align}\label{aux:p}
\Theta_s:=\inf\left\{\frac{\|u\|_{s}^2}{2}+\frac{|u|^{p+1}_{p+1}}{p+1}\::\: u\in\mathcal{H}_{0}^{s}(\Omega)\cap L^{p+1}(\Omega)\quad\text{ and }\quad \frac{\varepsilon|u|_2^2}{|\Omega|}=1\right\},
\end{align}
where $\eps>0$ is a parameter. We show that $\Theta_s\to \Theta_0$ as $s\to 0^+$, where
\begin{align*}
\Theta_0:=\inf\left\{\frac{|u|_{2}^2}{2}+\frac{|u|^{p+1}_{p+1}}{p+1}\::\: u\in L^2(\Omega)\cap L^{p+1}(\Omega),\quad u=0\text{ in }\R^N\backslash \Omega,\quad\text{ and }\quad \frac{\varepsilon|u|_2^2}{|\Omega|}=1\right\},
\end{align*}
which is shown to be achieved at $u_0=\eps^{-\frac{1}{2}}\chi_\Omega$.  Note that, in these cases, the functionals have terms with different homogeneities and therefore the link between a minimizer of \eqref{aux:p} and a solution of \eqref{f:p} cannot be established by a direct rescaling.  Here is where the parameter $\eps>0$ is used. A suitable choice of this parameter allows us to link, via a stability-type argument (see \eqref{ss}), the problems \eqref{aux:p} and \eqref{f:p}, and to conclude the desired convergence.

To close this introduction, we mention that an interesting problem would be to consider also \emph{sign-changing} solutions of \eqref{f:p} and to characterize its limit as $s\to 0^+$.  In this case, there is no clear candidate for the limiting profile, and a deeper understanding of the asymptotic behavior of the nodal set is needed (one can compare this analysis with the results from \cite{MSW19}). It could also be interesting to consider other nonlinearities, for instance $f_1(u)=u(u-\alpha)(\beta-u)$, where $\beta>\alpha>0$, or $f_2(u)=\lambda u^q+u^{2^*_s-1}$, where $q\in(0,1)$ and $2^*_s$ is the fractional Sobolev critical exponent.  The nonlinearity $f_1$ is related to the \emph{Allee effect} and it is used in ecology and genetics to establish a correlation between population size and the mean individual fitness \cite{cantrell2004spatial}, whereas $f_2$ is a \emph{concave-convex} nonlinearity for which multiplicity of positive solutions is known in fractional problems \cite{barrios2015critical}. In these cases, formally, the limit equation ($u=f_i(u)$) would have two positive constant solutions. We expect that ground states converge to the least-energy constant with respect to a limit energy functional. 

The paper is organized as follows. In Section \ref{notation} we fix some notation that is used throughout the paper. Section \ref{aux:lem} contains some auxiliary estimates. Section \ref{sec:pnl} is devoted to the power nonlinearity case and it contains the proofs of Theorems \ref{case2}, \ref{exisleast:intro}, and \ref{main:power}.  Finally, in Section \ref{gen:sec} we show Theorem~\ref{ac1convergence}, which directly implies Theorem~\ref{thm:f}.

\section{Notation}\label{notation}

We fix some notation that is used throughout the paper. The space $\cH^s_0(\Omega)$ is the homogeneous fractional Sobolev space given by 
\begin{align*}
\cH^{s}_{0}(\Omega):=\left\lbrace u \in H^{s}(\mathbb{R}^{N}):u=0~\mbox{on}~\mathbb{R}^{N}\setminus\Omega\right\rbrace.
\end{align*}
The energy functional associated to \eqref{eq:peq} is $J_{s_n}:\cH^{s_n}_0(\Omega)\to \R$ given by
\begin{align}\label{Jdef}
J_{s_n}(u):=\frac{1}{2}\|u\|^{2}_{s_n}-\frac{1}{p_{n}}|u|_{p_{n}}^{p_{n}},
\end{align}
where $\|u\|_{s_n}$ and $|u|_{p_n}$ are norms defined in \eqref{sf}.  We also let $|u|_\infty$ denote the usual supremum norm. Following \cite{CW19}, the logarithmic Laplacian $L_{\Delta}$ can be evaluated as
\begin{align*}
L_{\Delta}u(x):=c_{N}~p.v.~\int_{B_{1}(x)}\frac{u(x)-u(y)}{|x-y|^{N}}\, dy-c_{N}\int_{\mathbb{R}^{N}\setminus B_{1}(x)}\frac{u(y)}{|x-y|^{N}}\, dy+\rho_{N}u(x),
\end{align*}
where 
\begin{align}\label{rhon}
c_{N}:=\pi^{-\tfrac{N}{2}}\Gamma(\tfrac{N}{2}),\qquad \rho_{N}:=2\ln 2+\psi(\tfrac{N}{2})-\gamma,\quad \text{ and }\quad\gamma:=-\Gamma^{\prime}(1).
\end{align}
Here $\gamma$ is also known as the Euler-Mascheroni constant and $\psi:=\frac{\Gamma^{\prime}}{\Gamma}$ is the digamma function.  Moreover, $\mathbb{H}(\Omega)$ is the Hilbert space given by
\begin{align}\label{Hdef}
\mathbb{H}(\Omega):=\left\lbrace u\in L^{2}(\mathbb{R}^{N})~:~\int\int_{\substack{x,y\in\mathbb{R}^{N}\\ |x-y|\leq 1}}\frac{|u(x)-u(y)|^{2}}{|x-y|^{N}}\, dx\, dy<\infty~\mbox{and}~u=0~\mbox{in}~\mathbb{R}^{N}\setminus\Omega\right\rbrace
\end{align}
with inner product 
\begin{align*}
\mathcal{E}(u,v):=\frac{c_{N}}{2}\int\int_{\substack{x,y\in\mathbb{R}^{N}\\ |x-y|\leq 1}}\frac{(u(x)-u(y))(v(x)-v(y))}{|x-y|^{N}}\, dx\, dy,
\end{align*}
and the norm $\|u\|:=\left(\mathcal{E}(u,u)\right)^{\tfrac{1}{2}}$.  The space of compactly supported smooth functions $C^\infty_c(\Omega)$ is dense in $\mathbb{H}(\Omega)$, see \cite[Theorem 3.1]{CW19}. The operator $L_{\Delta}$ has the following associated quadratic form 
\begin{align}
\mathcal{E}_{L}(u,v)
&:=\mathcal{E}(u,v)-c_{N}\int\int_{\substack{x,y\in\mathbb{R}^{N}\\ |x-y|\geq 1}}\frac{u(x)v(y)}{|x-y|^{N}}\, dx\, dy+\rho_{N}\int_{\mathbb{R}^{N}}uv\, dx
\label{eq:bilog}.
\end{align}
Furthermore, for $u\in\mathbb H(\Omega)$,
\begin{align}\label{homega}
\mathcal{E}_{L}(u,u)=\frac{c_{N}}{2}\int_\Omega\int_\Omega \frac{(u(x)-u(y))^2}{|x-y|^{N}}\, dx\, dy
+\int_\Omega (h_\Omega(x)+\rho_N)u(x)^2\, dx,
\end{align}
where $h_\Omega(x)=c_N(\int_{B_1(x)\backslash \Omega}|x-y|^{-N}\, dy-\int_{ \Omega\backslash B_1(x)}|x-y|^{-N}\, dy)$, see \cite[Proposition 3.2]{CW19}.

By \cite[Theorem 1.1]{CW19}, it holds that 
\begin{align}
\mathcal{E}_{L}(u,u)=\int_{\mathbb{R}^{N}}\ln(|\xi|^2)|\hat{u}(\xi)|^{2}\, d\xi\qquad\mbox{for all}~u\in\mathcal{C}_{c}^{\infty}(\Omega),\label{eq:bilogf}
\end{align}
where $\hat u$ is the Fourier transform of $u$. Moreover, for $\varphi\in\mathcal{C}_{c}^{\infty}(\Omega)$ we have that $L_{\Delta}\varphi\in L^{p}(\mathbb{R}^{N})$ and 
\begin{align}\label{eq:725}
\mathcal{E}_{L}(u,\varphi)=\int_{\Omega}uL_{\Delta}\varphi \, dx
 \qquad \text{ for $u\in\mathbb{H}(\Omega)$,}
\end{align}
see \cite[Theorem 1.1]{CW19}. We say that $u\in\mathbb{H}(\Omega)$ is a weak solution of \eqref{eq:727} if 
\begin{align}
\mathcal{E}_{L}(u,v)=-\mu\int_{\Omega}uv\ln|u|\, dx\quad\mbox{for all}~v\in\mathbb{H}(\Omega).
\end{align}
Note that $\lim_{t\to 0}\ln(t^2)t =0$.

\section{Auxiliary lemmas}\label{aux:lem}

\subsection{Asymptotic estimates}
\begin{lemma}\label{lem2.2}
Let $(\varphi_{n})_{n\in \N}$ be a uniformly bounded sequence in $L^{\infty}(\Omega)$, $p\in [1,2]$, and let $(p_n)_{n\in \N}\subset (1,2)$ be such that $\lim_{n\to\infty}p_n=p$. Then, 
\begin{align}
    \int_{\Omega}||\varphi_{n}|^{p_{n}}-|\varphi_{n}|^{p}|\, dx\rightarrow 0\quad\mbox{as}\quad n\rightarrow\infty.
\end{align}
\end{lemma}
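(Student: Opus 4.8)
The plan is to reduce the claim to a pointwise estimate on the integrand combined with the dominated convergence theorem. First I would fix a uniform bound $M \geq 1$ with $|\varphi_n|_\infty \leq M$ for all $n$, so that all the quantities $|\varphi_n(x)|$ lie in the compact interval $[0, M]$ for a.e.\ $x \in \Omega$. The key observation is that the function $(t, q) \mapsto t^q$ is uniformly continuous on $[0, M] \times [1, 2]$; more quantitatively, for $t \in [0, M]$ and exponents $q, q' \in [1,2]$ one has the elementary bound $|t^{q} - t^{q'}| \leq |q - q'| \, \sup_{r \in [1,2]} |t^r \ln t| \leq C_M\, |q - q'|$, where $C_M := \sup_{t \in [0,M],\, r\in[1,2]} |t^r \ln t| < \infty$ (the supremum being finite because $t^r \ln t \to 0$ as $t \to 0^+$ and the function is continuous on the compact set). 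Applying this with $q = p_n$ and $q' = p$ gives the pointwise estimate
\begin{align*}
\bigl| \, |\varphi_n(x)|^{p_n} - |\varphi_n(x)|^{p} \, \bigr| \leq C_M \, |p_n - p| \qquad \text{for a.e. } x \in \Omega.
\end{align*}

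Next I would integrate this over $\Omega$, which has finite measure since $\Omega$ is bounded, to obtain
\begin{align*}
\int_\Omega \bigl| \, |\varphi_n|^{p_n} - |\varphi_n|^{p} \, \bigr| \, dx \leq C_M \, |\Omega| \, |p_n - p|.
\end{align*}
Since $p_n \to p$ by hypothesis, the right-hand side tends to $0$ as $n \to \infty$, which is exactly the claimed convergence. (Alternatively, one can avoid the explicit modulus of continuity and simply invoke dominated convergence: the integrand converges to $0$ pointwise a.e.\ by continuity of $q \mapsto t^q$, and is dominated by the constant $2M^2 \in L^1(\Omega)$; but the quantitative route above is just as short and gives a rate.)

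I do not anticipate a genuine obstacle here; this is a routine equicontinuity-plus-finite-measure argument. The only mild point of care is the behavior of $t^q \ln t$ near $t = 0$, to ensure $C_M$ is finite — but this is handled by the standard limit $\lim_{t \to 0^+} t^r \ln t = 0$ for $r \geq 1$, already remarked in the paper in the form $\lim_{t\to 0}\ln(t^2)t = 0$. One should also note that no lower bound on $\varphi_n$ (such as positivity) is needed, since everything is phrased in terms of $|\varphi_n|$, and that the argument is uniform in the sequence precisely because of the uniform $L^\infty$ bound — without it, mass could escape to large values of $|\varphi_n|$ where $|t^{p_n} - t^p|$ is not controlled.
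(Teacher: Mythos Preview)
Your proposal is correct and follows essentially the same approach as the paper: both arguments apply the mean value theorem to $q \mapsto |\varphi_n(x)|^q$, obtaining the pointwise bound $\bigl||\varphi_n|^{p_n} - |\varphi_n|^{p}\bigr| \leq |p_n - p| \sup_{r}\bigl||\varphi_n|^r \ln|\varphi_n|\bigr|$, observe that this supremum is finite on $[0,M]\times[1,2]$ (with the usual care at $t=0$), and then integrate over the bounded set $\Omega$. The paper writes this via the integral form $|\varphi_n|^{p_n}-|\varphi_n|^{p}=\int_0^1 \ln(|\varphi_n|)\,|\varphi_n|^{p+\tau(p_n-p)}(p_n-p)\,d\tau$, but the content is identical to your estimate.
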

\begin{proof}
Consider the function $g(t):=|\varphi_{n}|^{t}$. Then,
\begin{align*}
|\varphi_{n}|^{p_{n}}-|\varphi_{n}|^{p}
=\int_{0}^{1}g^{\prime}(p+\tau(p_{n}-p))(p_{n}-p)\, d\tau=\int_{0}^{1}\ln\left(|\varphi_{n}|\right)|\varphi_{n}|^{p+\tau(p_{n}-p)}(p_{n}-p)\, d\tau.
\end{align*}
Integrating in $\Omega$ and using Fubini's Theorem,
\begin{equation}
\int_{\Omega}\left\lvert|\varphi_{n}|^{p_{n}}-|\varphi_{n}|^{p}\right\rvert \, dx\leq\int_{0}^{1}\int_{\Omega}\left\lvert\ln\left(|\varphi_{n}|\right)\right\rvert|\varphi_{n}|^{p+\tau(p_{n}-p)}|p_{n}-p|\, dx\, d\tau.
\label{eq:25}
\end{equation}
By assumption, there is $M>2$ such that $|\varphi_{n}|_{\infty}\leq M$ for all $n\in\mathbb N$.  Therefore, by \eqref{eq:25},
\begin{align*}
    \int_{\Omega}\left\lvert|\varphi_{n}|^{p_{n}}-|\varphi_{n}|^{p}\right\rvert \, dx\leq
    |p_n-p| |\ln M| M^{p+1}|\Omega|
\end{align*}
for all $n$ sufficiently large, and the claim follows. 
\end{proof}

\begin{lemma}\label{d:lemma}
Let $(s_k)_{k\in\N}\subset(0,\frac{1}{4})$ and $(p_k)_{k\in\N}\subset(1,2)$ be such that $\lim\limits_{k\to\infty}s_k=0$ and $\lim\limits_{k\to\infty}p_k=2$. Let
$(u_{k})_{k\in\mathbb N}\subset L^{2}(\Omega)$ and $u_{0}\in L^{2}(\Omega)$ be such that $u_{k}\rightarrow u_{0}$ in $L^{2}(\Omega)$ as $k\rightarrow\infty$. Then, passing to a subsequence, 
\begin{align*}
\lim_{k\rightarrow\infty}\int_{\Omega}\ln(|u_{k}|^{2})|u_{k}|^{p_{k}-2}u_{k}\varphi \, dx&=\int_{\Omega}\ln(|u_{0}|^{2})u_{0}\varphi \, dx\qquad \text{for all $\varphi\in\mathcal{C}_{c}^{\infty}(\Omega).$}
\end{align*} 
\end{lemma}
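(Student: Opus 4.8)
The goal is to pass to the limit in the nonlinear term $\int_\Omega \ln(|u_k|^2)|u_k|^{p_k-2}u_k\varphi\,dx$, using only $L^2$-convergence $u_k\to u_0$ and $p_k\to 2$, $s_k\to 0$. The plan is to combine a pointwise convergence argument (along a subsequence) with a domination/equi-integrability argument, so that one can apply a generalized dominated convergence theorem (or Vitali's convergence theorem).

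First I would extract from $u_k\to u_0$ in $L^2(\Omega)$ a subsequence (not relabeled) along which $u_k\to u_0$ a.e. in $\Omega$ and such that there is a dominating function $g\in L^2(\Omega)$ with $|u_k|\le g$ a.e. for all $k$ (this is the standard consequence of $L^2$-convergence). On the set where $u_0(x)\ne 0$, the continuity of $t\mapsto \ln(t^2)|t|^{p_k-2}t$ near $u_0(x)$ together with $p_k\to 2$ gives $\ln(|u_k(x)|^2)|u_k(x)|^{p_k-2}u_k(x)\to \ln(|u_0(x)|^2)u_0(x)$ pointwise a.e.; on the set where $u_0(x)=0$ one uses that $\ln(t^2)|t|^{p_k-2}t\to 0$ as $t\to 0$ (uniformly enough since the exponent $p_k-1$ stays in a neighborhood of $1$), so the integrand tends to $0 = \ln(|u_0|^2)u_0$ there as well. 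Hence the integrands converge pointwise a.e. to $\ln(|u_0|^2)u_0\varphi$, which is integrable because $|\ln(t^2)t|\le C(1+t^2)$ on $\R$ and $u_0\in L^2(\Omega)$, $\varphi$ bounded.

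For the passage to the limit of the integrals, I would establish equi-integrability of the family $h_k := \ln(|u_k|^2)|u_k|^{p_k-2}u_k\,\varphi$. The key elementary estimate is that for $t\ge 0$ and any exponent $q$ in a fixed compact subinterval of $(0,1]$ (here $q=p_k-1$, and $p_k\to 2$ so eventually $p_k-1\ge \tfrac12$, say), one has a bound of the form $|\ln(t^2)|\,t^{q}\le C_\delta(1+t^{2-\delta})$ for a small $\delta>0$ — indeed $|\ln(t^2)|t^q$ is bounded near $t=0$ and grows slower than any positive power at infinity. Choosing $\delta$ so that $2-\delta<2$ and using $|u_k|\le g\in L^2(\Omega)$, we get $|h_k|\le C|\varphi|_\infty(1+g^{2-\delta})$, and $g^{2-\delta}\in L^{2/(2-\delta)}(\Omega)\subset L^1(\Omega)$ since $\Omega$ is bounded. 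This uniform domination by a fixed $L^1$ function is exactly what is needed; dominated convergence (generalized version, allowing the exponent in the integrand to vary) then yields the claimed limit. Alternatively, one can phrase the domination as $L^r$-boundedness of $h_k$ for some $r>1$ plus pointwise a.e. convergence and invoke Vitali.

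The main obstacle is purely the behavior of the nonlinearity $t\mapsto \ln(t^2)|t|^{p_k-2}t$ near $t=0$ combined with the moving exponent $p_k$: one must check that the near-zero part is harmless \emph{uniformly} in $k$ (since $p_k$ is only close to, not equal to, $2$, the singular factor $\ln(t^2)$ is multiplied by $t^{p_k-1}$ with $p_k-1$ possibly slightly below $1$, but bounded away from $0$, so $\ln(t^2)t^{p_k-1}\to 0$ as $t\to0$ uniformly for $p_k-1$ in a compact subset of $(0,1]$), and that the growth at infinity is controlled by $g^{2-\delta}$ rather than $g^2\ln$. Once these two elementary one-variable estimates are in place, the rest is a routine application of dominated/Vitali convergence, and the extraction of the a.e.-convergent, $L^2$-dominated subsequence explains the phrase ``passing to a subsequence'' in the statement. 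Note that $s_k$ plays no role in this lemma beyond being the index that drives $p_k\to2$; it does not enter the argument.
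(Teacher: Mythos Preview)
Your proposal is correct and follows essentially the same approach as the paper: extract a subsequence along which $u_k\to u_0$ a.e.\ and $|u_k|\le g$ for some $g\in L^2(\Omega)$, establish pointwise convergence of the integrands, and then apply dominated convergence with a uniform $L^1$ majorant built from the elementary one-variable bounds on $t\mapsto |\ln(t^2)|\,t^{p_k-1}$. The paper differs only cosmetically: it splits the integral at $\{|u_k|\le 1\}$ and $\{|u_k|>1\}$, bounding the first piece by a constant (since $\sup_{t\in(0,1)}t^{1/2}|\ln t^2|<\infty$) and the second by $\frac{2}{3-p_k}|u_k|^2|\varphi|\le 2|\varphi|_\infty g^2$, whereas you use a single bound $|\ln(t^2)|t^{p_k-1}\le C_\delta(1+t^{2-\delta})$ and dominate by $C(1+g^{2-\delta})$; both lead to the same conclusion.
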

\begin{proof}
Notice that
\begin{align}\label{eqm2}
&\int_{\Omega}\ln(|u_{k}|^{2})|u_{k}|^{p_{k}-2}u_{k}\varphi\, dx=\int_{\left\lbrace|u_{k}|\leq 1\right\rbrace}\ln(|u_{k}|^{2})|u_{k}|^{p_{k}-2}u_{k}\varphi\, dx+\int_{\left\lbrace|u_{k}|>1\right\rbrace}\ln(|u_{k}|^{2})|u_{k}|^{p_{k}-2}u_{k}\varphi \, dx.
\end{align}
Passing to a subsequence, we have that $\sup_{t\in(0,1)}t^{p_{k}-1}|\ln t^{2}|\leq \sup_{t\in(0,1)}t^{\frac{1}{2}}|\ln t^{2}|<\infty$ (note that $\ln(t^2)t^\frac{1}{2}=0$) and $u_{k}\rightarrow u_{0}$  a.e. in $\Omega$ as $n\to\infty$. In particular, since $\ln (1)=0$,
\begin{align*}
\chi_{\{|u_k|\leq 1\}}\ln(|u_k|^2)u_{k}\rightarrow \chi_{\{|u_0|\leq 1\}}\ln(|u_0|^2)u_{0}\qquad \text{ a.e. in $\Omega$ as $n\to\infty$.}
\end{align*}
Then, by the dominated convergence theorem,
\begin{align}\label{eqm1}
\lim_{k\to\infty}\int_{\left\lbrace|u_{k}|\leq 1\right\rbrace}\ln(|u_{k}|^{2})|u_{k}|^{p_{k}-2}u_{k}\varphi\, dx
= \int_{\left\lbrace|u_{0}|\leq 1\right\rbrace}\ln(|u_{0}|^{2})u_{0}\varphi\, dx.
\end{align}
If $|u_{k}|>1$, it follows easily (see, for example, \cite[Lemma 3.3]{HSS22} with $\alpha=p_{k}-2$ and $\beta=1$) that, passing to a subsequence,
\begin{align}\label{spiral}
\ln(|u_{k}|^{2})|u_{k}|^{p_{k}-2}|u_{k}\varphi|&\leq \frac{2}{3-p_{k}}|u_{k}|^{2}|\varphi|\leq 2\|\varphi\|_{\infty}|U|^{2}\in L^{1}(\Omega),
\end{align}
for some $U\in L^2(\Omega)$ (see \cite[Lemma A.1]{W97}). The claim now follows by applying the dominated convergence theorem to the second integral in \eqref{eqm2} together with \eqref{eqm1}.
\end{proof}
\begin{lemma}
\label{haprox}
Let $(s_{k})_{k\in\N},$ $(p_k)_{k\in\N},$ and $\mu$ as in \eqref{snpn} and let $\phi\in\mathcal{C}_{c}^{\infty}(\Omega)$. Then,
\begin{align}
\label{eq:weakaprox}
\lim_{k\rightarrow\infty}\frac{1}{s_{k}}J_{s_{k}}(\phi)=-\frac{\mu}{4}|\phi|_{2}^{2}+\frac{1}{2}\left(\mathcal{E}_{L}(\phi,\phi)+\mu\int_{\mathbb{R}^{N}}|\phi|^{2}\ln|\phi|\, dx\right).
\end{align}
In particular, if $v\in\mathbb{H}(\Omega)$ is a weak solution of \eqref{eq:727} and $(\phi_{n})_{n\in\N}\subset\mathcal{C}_{c}^{\infty}(\Omega)$ is such that $\phi_{n}\rightarrow v$ in $\mathbb{H}(\Omega)$ as $n\rightarrow\infty$, then $\lim\limits_{n\rightarrow\infty}\lim\limits_{k\rightarrow\infty}\frac{1}{s_{k}}J_{s_{k}}(\phi_n)=-\frac{\mu}{4}|v|_{2}^{2}.$
\end{lemma}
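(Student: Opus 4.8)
The plan is to prove the two assertions separately. For the first, I would start from the definition \eqref{Jdef}, namely $\frac{1}{s_k}J_{s_k}(\phi) = \frac{1}{2}\frac{\|\phi\|_{s_k}^2}{s_k} - \frac{1}{s_k p_k}|\phi|_{p_k}^{p_k}$, and analyze the two terms. For the quadratic term, since $\phi\in C^\infty_c(\Omega)$, the expansion \eqref{eq:726} of the fractional Laplacian (from \cite[Theorem 1.1]{CW19}) gives $\frac{(-\Delta)^{s_k}\phi - \phi}{s_k}\to L_\Delta \phi$ in $L^2$, and pairing with $\phi$ in $L^2(\Omega)$ yields $\frac{\|\phi\|_{s_k}^2 - |\phi|_2^2}{s_k}\to \mathcal{E}_L(\phi,\phi)$ (using that $\|\phi\|_{s_k}^2 = \int_\Omega \phi\,(-\Delta)^{s_k}\phi\,dx$). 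Hence $\frac{1}{2}\frac{\|\phi\|_{s_k}^2}{s_k} = \frac{1}{2s_k}|\phi|_2^2 + \frac{1}{2}\mathcal{E}_L(\phi,\phi) + o(1)$. For the nonlinear term, I would write $|\phi|^{p_k} = |\phi|^2 |\phi|^{p_k - 2} = |\phi|^2 e^{(p_k-2)\ln|\phi|}$ and use the first-order Taylor expansion $e^{(p_k-2)\ln|\phi|} = 1 + (p_k-2)\ln|\phi| + O((p_k-2)^2 (\ln|\phi|)^2)$, valid uniformly on the support of $\phi$ after truncating near the zeros of $\phi$ (the terms with $|\phi|$ small are harmless since $t^2|\ln t|^j\to 0$ as $t\to 0$). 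This gives $|\phi|^{p_k}_{p_k} = |\phi|_2^2 + (p_k-2)\int_{\R^N}|\phi|^2\ln|\phi|\,dx + O((p_k-2)^2)$, and dividing by $s_k p_k$ and using $\frac{p_k-2}{s_k}\to -\mu$ and $p_k\to 2$, I get $\frac{1}{s_k p_k}|\phi|^{p_k}_{p_k} = \frac{1}{2 s_k}|\phi|_2^2 - \frac{\mu}{2}\int_{\R^N}|\phi|^2\ln|\phi|\,dx + o(1)$. (The term $\frac{1}{s_k p_k}|\phi|_2^2$ must be compared carefully with $\frac{1}{2s_k}|\phi|_2^2$: since $\frac{1}{p_k}\to\frac12$ but $\frac{1}{s_k}\to\infty$, I instead write $\frac{1}{s_k p_k}|\phi|_2^2 = \frac{1}{2s_k}|\phi|_2^2 + \frac{1}{s_k}\big(\frac{1}{p_k}-\frac12\big)|\phi|_2^2$ and note $\frac{1}{s_k}\big(\frac{1}{p_k}-\frac12\big) = \frac{2-p_k}{2 s_k p_k}\to \frac{\mu}{4}$, contributing the $-\frac{\mu}{4}|\phi|_2^2$ term.) Subtracting the two expansions, the divergent $\frac{1}{2s_k}|\phi|_2^2$ terms cancel and \eqref{eq:weakaprox} follows, after collecting the $\mathcal{E}_L(\phi,\phi)$, $\frac{\mu}{2}\int|\phi|^2\ln|\phi|$, and $-\frac{\mu}{4}|\phi|_2^2$ contributions.

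For the second assertion, I would first use that $v$ is a weak solution of \eqref{eq:727} and test the weak formulation with $v$ itself (legitimate since $v\in\mathbb{H}(\Omega)$), obtaining $\mathcal{E}_L(v,v) = -\mu\int_\Omega v^2\ln|v|\,dx$, so that the bracketed quantity $\mathcal{E}_L(v,v) + \mu\int_{\R^N}|v|^2\ln|v|\,dx$ vanishes. Thus, by the first part, $\lim_{k\to\infty}\frac{1}{s_k}J_{s_k}(\phi_n) = -\frac{\mu}{4}|\phi_n|_2^2 + \frac{1}{2}\big(\mathcal{E}_L(\phi_n,\phi_n) + \mu\int_{\R^N}|\phi_n|^2\ln|\phi_n|\,dx\big)$ for each fixed $n$. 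It then remains to let $n\to\infty$: since $\phi_n\to v$ in $\mathbb{H}(\Omega)$, we have $|\phi_n|_2^2\to|v|_2^2$ (as $\mathbb{H}(\Omega)\hookrightarrow L^2$) and $\mathcal{E}_L(\phi_n,\phi_n)\to\mathcal{E}_L(v,v)$ (by continuity of $\mathcal{E}_L$ on $\mathbb{H}(\Omega)$, which follows from \eqref{homega} since $h_\Omega + \rho_N$ is bounded on $\Omega$, or directly from \eqref{eq:bilogf} together with a density argument), while the logarithmic term $\int_{\R^N}|\phi_n|^2\ln|\phi_n|\,dx\to\int_{\R^N}|v|^2\ln|v|\,dx$ needs a convergence argument — along a subsequence $\phi_n\to v$ a.e. and $|\phi_n|$ is dominated in $L^2$, and $|t^2\ln|t||\leq C(t^2 + t^2|\ln t| \chi_{\{|t|\le 1\}})\leq C(1+|t|^{2+\delta})$ locally, so dominated convergence applies; this mirrors the estimates already used in Lemma \ref{d:lemma}. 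Passing to this subsequence, the bracket converges to $\mathcal{E}_L(v,v)+\mu\int|v|^2\ln|v| = 0$, leaving exactly $-\frac{\mu}{4}|v|_2^2$.

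The main obstacle I anticipate is the uniform control of the Taylor remainder $e^{(p_k-2)\ln|\phi|} - 1 - (p_k-2)\ln|\phi|$ near the zero set of $\phi$ in the first part: although $\phi$ is smooth and compactly supported, $\ln|\phi|$ blows up where $\phi$ vanishes, so the remainder estimate must be split — on $\{|\phi|\geq\delta\}$ the remainder is $O((p_k-2)^2)$ uniformly, while on $\{|\phi|<\delta\}$ one uses directly that $|\phi|^{p_k}, |\phi|^2, |\phi|^2|\ln|\phi||$ are all small (uniformly in $k$, since $p_k$ is bounded away from $1$), and the two estimates combine after first choosing $\delta$ small and then letting $k\to\infty$. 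The secondary subtlety is bookkeeping the cancellation of the $\frac{1}{s_k}|\phi|_2^2$ terms against the $\frac{2-p_k}{s_k}$-type contributions; I would organize the computation so that both the quadratic and nonlinear terms are written as $\frac{1}{2s_k}|\phi|_2^2 + (\text{finite limit}) + o(1)$ before subtracting, to make the cancellation transparent. Everything else is routine dominated-convergence and continuity of quadratic forms.
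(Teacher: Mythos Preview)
Your argument for \eqref{eq:weakaprox} is correct and matches the paper's decomposition: isolate the $\frac{1}{s_k}(\frac12-\frac{1}{p_k})$ contribution to produce $-\frac{\mu}{4}|\phi|_2^2$, then show $\frac{\|\phi\|_{s_k}^2-|\phi|_2^2}{s_k}\to\mathcal{E}_L(\phi,\phi)$ and $\frac{|\phi|_2^2-|\phi|_{p_k}^{p_k}}{s_k}\to\mu\int|\phi|^2\ln|\phi|$ separately. The paper does the first via the Fourier representation and the second via the exact identity $|\phi|^{p_k}-|\phi|^2=(p_k-2)\int_0^1|\phi|^{2+(p_k-2)\tau}\ln|\phi|\,d\tau$ together with dominated convergence; this sidesteps your Taylor--remainder splitting near $\{\phi=0\}$, but the two routes are equivalent.

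For the second assertion there are two points to fix. First, $h_\Omega$ is \emph{not} bounded on $\Omega$ (it diverges logarithmically as $x\to\partial\Omega$), so your first justification of the continuity of $\mathcal{E}_L$ on $\mathbb{H}(\Omega)$ fails; continuity does follow, but from \eqref{eq:bilog}, since the two lower-order terms there are controlled by $|u|_1^2$ and $|u|_2^2$. Second, and more importantly, the dominated-convergence argument you sketch for $\int|\phi_n|^2\ln|\phi_n|\to\int|v|^2\ln|v|$ does not go through: on $\{|\phi_n|>1\}$ one only has $t^2\ln t\le C_\delta t^{2+\delta}$, which would require $L^{2+\delta}$-domination, and $\mathbb{H}(\Omega)$-convergence gives no better than $L^2$. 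This is \emph{not} parallel to Lemma~\ref{d:lemma}: there the integrand carries the power $p_k-1<1$ on $u_k$ together with a bounded test function $\varphi$, so the estimate \eqref{spiral} lands back at $|U|^2\in L^1$; in your situation the exponent is $2$ and there is no extra bounded factor. The paper avoids this issue entirely by invoking that $J_0\in C^1(\mathbb{H}(\Omega))$ (\cite[Lemma~3.9]{HSS22}), which yields directly $\mathcal{E}_L(\phi_n,\phi_n)+\mu\int|\phi_n|^2\ln|\phi_n|=J_0'(\phi_n)\phi_n\to J_0'(v)v=0$. You should either cite this $C^1$ result or supply the underlying continuity argument for $I$ on $\mathbb{H}(\Omega)$, which is not a routine domination.
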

\begin{proof}
Let $\phi\in\mathcal{C}_{c}^{\infty}(\Omega)$, then, 
\begin{align*}
\lim_{k\rightarrow\infty}\frac{1}{s_{k}}J_{s_{k}}(\phi)&=\lim_{k\rightarrow\infty}\frac{1}{s_{k}}\left(\frac{\|\phi\|_{s_{k}}^{2}}{2}-\frac{|\phi|_{p_{k}}^{p_{k}}}{p_{k}}\right)=\lim_{k\rightarrow\infty}\frac{1}{s_{k}}\left(\frac{1}{2}-\frac{1}{p_{k}}\right)\|\phi\|_{s_{k}}^{2}+\lim_{k\rightarrow\infty}\frac{\|\phi\|_{s_{k}}^{2}-|\phi|_{p_{k}}^{p_{k}}}{p_{k}s_k}.
\end{align*}
Thus, since $\|\phi\|_{s_{k}}^{2}\to |\phi|_{2}^{2}$ (see, \emph{e.g.}, \cite[Corollary 3]{BBM01}),
\begin{align}\label{eq:81}
\lim_{k\rightarrow\infty}\frac{1}{s_{k}}J_{s_{k}}(\phi)=-\frac{\mu}{4}|\phi|_{2}^{2}+\frac{1}{2}\lim_{k\rightarrow\infty}\frac{\|\phi\|_{s_{k}}^{2}-|\phi|_{p_{k}}^{p_{k}}}{s_{k}}.
\end{align}
Note that 
\begin{align}
\label{eq:82}
\frac{\|\phi\|_{s_{k}}^{2}-|\phi|_{p_{k}}^{p_{k}}}{s_{k}}=\mathcal{I}_{k}+\mathcal{J}_{k},\qquad \text{ where }\quad \mathcal{I}_{k}:=\frac{\|\phi\|_{s_{k}}^{2}-|\phi|_{2}^{2}}{s_{k}}\quad\mbox{and}\quad \mathcal{J}_{k}:=\frac{|\phi|_{2}^{2}-|\phi|_{p_{k}}^{p_{k}}}{s_{k}}.
\end{align}

Let $\widehat{\phi}$ denote the Fourier transform of $\phi$.  If $|\xi|<1$, then passing to a subsequence,
\begin{align}
|\xi|^{2s_{k}\tau}|\ln(|\xi|^{2})||\widehat{\phi}(\xi)|^{2}\leq 2|\widehat{\phi}(\xi)|^{2}.\label{eq:83}
\end{align}
On the other hand, if $|\xi|\geq 1$, since $0<s_{k}<\frac{1}{4}$, we have that
\begin{align}
|\xi|^{2s_{k}\tau}\ln(|\xi|^{2})|\widehat{\phi}(\xi)|^{2}\leq |\xi|^{1/2}\ln(|\xi|^{2})|\widehat{\phi}(\xi)|^{2}\leq\frac{4}{3}|\xi|^{2}|\widehat{\phi}(\xi)|^{2}.\label{eq:84}
\end{align}
Then, by \eqref{eq:83}, \eqref{eq:84}, and dominated convergence,
\begin{align}
\lim_{k\rightarrow\infty}\mathcal{I}_{k}
=\lim_{k\rightarrow\infty}\int_{\mathbb{R}^{N}}\int_{0}^{1}|\xi|^{2s_{k}\tau}\ln(|\xi|^{2})|\widehat{\phi}(\xi)|^{2}\, d\tau\, d\xi
=\int_{\mathbb{R}^{N}}\ln(|\xi|^{2})|\widehat{\phi}|^{2}\, d\xi=\mathcal{E}_{L}(\phi,\phi).\label{eq:85}
\end{align}
For $\mathcal{J}_{k}$ it holds that 
\begin{align*}
&\lim_{k\rightarrow\infty}-\mathcal{J}_{k}=\lim_{k\rightarrow\infty}\frac{p_k-2}{s_k}\int_{0}^{1}\int_{\mathbb{R}^{N}}|\phi|^{2+(p_{k}-2)\tau}\ln|\phi|\, dx\, d\tau\\
&=\lim_{k\rightarrow\infty}\frac{p_k-2}{s_k}\int_{0}^{1}\int_{\left\lbrace|\phi|<1\right\rbrace}|\phi|^{2+(p_{k}-2)\tau}\ln|\phi|\, dx\, d\tau
+\lim_{k\rightarrow\infty}\frac{p_k-2}{s_k}\int_{0}^{1}\int_{\left\lbrace|\phi|\geq 1\right\rbrace}|\phi|^{2+(p_{k}-2)\tau}\ln|\phi|\, dx\, d\tau.
\end{align*}
If $|\phi|<1$, $|\phi|^{2+(p_{k}-2)\tau}\ln(|\phi|)$ is bounded independently of $k$. On the other hand, if $|\phi|\geq 1$, $
|\phi|^{2+(p_{k}-2)\tau}\ln(|\phi|)<2|\phi|^{3}\in L^{1}(\mathbb{R}^{N})$ (see \eqref{spiral}). By dominated convergence,
\begin{align}
\lim_{k\rightarrow\infty}\mathcal{J}_{k}=\mu\int_{\mathbb{R}^{N}}|\phi|^{2}\ln|\phi|\, dx.\label{eq:86}
\end{align}
By using \eqref{eq:82}, \eqref{eq:85} and \eqref{eq:86} into \eqref{eq:81} we obtain \eqref{eq:weakaprox}.\\
Now, let $(\phi_{n})_{n\in\N}\subset\mathcal{C}_{c}^{\infty}(\Omega)$ and $v\in\mathbb{H}(\Omega)$ such that $\phi_{n}\rightarrow v$ in $\mathbb{H}(\Omega)$ as $n\to\infty$. Assume that $v\in\mathbb{H}(\Omega)$ is a weak solution of \eqref{eq:727}; in particular, $\mathcal{E}_{L}(v,v)+\mu\int_{\mathbb{R}^{N}}|v|^{2}\ln|v|\, dx=0.$ Since $J_{0}$ is of class $\mathcal{C}^{1}$ over $\mathbb{H}(\Omega)$ (see \cite[Lemma 3.9]{HSS22}), $\mathcal{E}_{L}(\phi_{n},\phi_{n})+\mu\int_{\mathbb{R}^{N}}|\phi_{n}|^{2}\ln|\phi_{n}|\, dx=o(1)$ as $n\rightarrow\infty.$ Then, by the continuous embedding of $\mathbb{H}(\Omega)$ into $L^{2}(\Omega)$, $\frac{\mu}{4}|\phi_{n}|_{2}^{2}\rightarrow\frac{\mu}{4}|v|_{2}^{2}$ as $n\rightarrow\infty.$ This concludes the proof.
\end{proof}

We quote the following result from \cite[Lemma 3.5]{HSS22}.
\begin{lemma}
\label{smallorder}
Let $u\in\mathcal{H}_{0}^{s}(\Omega)$ for some $s\in(0,1)$. Then $u\in\mathbb{H}(\Omega)$ and there is $C_{1}=C_{1}(N)>0$ and $C_2=C_{2}(\Omega)>0$ such that 
$|\mathcal{E}_{L}(u,u)|\leq C_{1}|u|_{1}^{2}+\frac{1}{s}\|u\|_{s}^{2}$ and $\|u\|^{2}\leq C_{2}|u|_{2}^{2}+\frac{1}{s}\|u\|_{s}^{2}.$
\end{lemma}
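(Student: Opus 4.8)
The plan is to establish the two inequalities first for test functions $\varphi\in\mathcal{C}^\infty_c(\Omega)$ using the Fourier representation \eqref{eq:bilogf} of $\mathcal{E}_L$, then to pass to a general $u\in\cH^s_0(\Omega)$ by density, and to handle the membership $u\in\mathbb{H}(\Omega)$ separately by an elementary comparison of kernels.

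First I would record that for any $u\in\cH^s_0(\Omega)$ we have $u\in L^2(\R^N)$ and $u\equiv0$ outside $\Omega$, so only the double integral in \eqref{Hdef} needs to be controlled; since $|x-y|^{-N}\leq|x-y|^{-N-2s}$ whenever $|x-y|\leq1$, that integral is bounded by $c_{N,s}^{-1}\|u\|_s^2<\infty$, which gives $u\in\mathbb{H}(\Omega)$ and the crude estimate $\|u\|^2\leq\tfrac{c_N}{2c_{N,s}}\|u\|_s^2$. The coefficient $\tfrac{c_N}{2c_{N,s}}$ is not bounded by $s^{-1}$ uniformly in $s\in(0,1)$, so this bound will only be used to obtain $u\in\mathbb{H}(\Omega)$ and, later, to upgrade $\cH^s_0(\Omega)$-convergence to $\mathbb{H}(\Omega)$-convergence; the sharp constant $s^{-1}$ requires the Fourier argument.

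For $\varphi\in\mathcal{C}^\infty_c(\Omega)$ I would use that $\mathcal{E}_L(\varphi,\varphi)=\int_{\R^N}\ln(|\xi|^2)|\widehat\varphi|^2\,d\xi$ by \eqref{eq:bilogf}, that $\|\varphi\|_s^2=\int_{\R^N}|\xi|^{2s}|\widehat\varphi|^2\,d\xi$ and $|\varphi|_2^2=\int_{\R^N}|\widehat\varphi|^2\,d\xi$ by Plancherel, and the elementary inequality $\ln r\leq r-1$ for $r>0$, which applied with $r=|\xi|^{2s}$ yields the pointwise bound $\ln(|\xi|^2)\leq s^{-1}(|\xi|^{2s}-1)$. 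Splitting the frequency integral at $|\xi|=1$: on $\{|\xi|>1\}$ the integrand $\ln(|\xi|^2)|\widehat\varphi|^2$ is nonnegative and at most $s^{-1}|\xi|^{2s}|\widehat\varphi|^2$, while on $\{|\xi|\leq1\}$ it is nonpositive and, using $|\widehat\varphi|_\infty\leq c\,|\varphi|_1$ for a dimensional constant $c$ together with $K_N:=\int_{|\xi|\leq1}|\ln(|\xi|^2)|\,d\xi<\infty$, it is at least $-c^2K_N|\varphi|_1^2$. This gives simultaneously $\mathcal{E}_L(\varphi,\varphi)\leq s^{-1}\|\varphi\|_s^2$ and $\mathcal{E}_L(\varphi,\varphi)\geq -C_1(N)|\varphi|_1^2$ with $C_1(N):=c^2K_N$, hence $|\mathcal{E}_L(\varphi,\varphi)|\leq C_1(N)|\varphi|_1^2+s^{-1}\|\varphi\|_s^2$. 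For the second inequality I would insert \eqref{eq:bilog} with $u=v=\varphi$, bound $\mathcal{E}_L(\varphi,\varphi)\leq s^{-1}\|\varphi\|_s^2$, and absorb the remaining terms $c_N\int\int_{|x-y|\geq1}|x-y|^{-N}\varphi(x)\varphi(y)\,dx\,dy-\rho_N|\varphi|_2^2$ into $C_2(\Omega)|\varphi|_2^2$ using $|\varphi(x)\varphi(y)|\leq\tfrac12(\varphi(x)^2+\varphi(y)^2)$ and $\sup_{x\in\Omega}\int_{\{y\in\Omega:\,|x-y|\geq1\}}|x-y|^{-N}\,dy<\infty$, which is finite because $\Omega$ is bounded.

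Finally, for general $u\in\cH^s_0(\Omega)$ I would choose $\varphi_n\to u$ in $\cH^s_0(\Omega)$ (possible since $\Omega$ is Lipschitz, so $\mathcal{C}^\infty_c(\Omega)$ is dense), note that $\|\varphi_n\|_s\to\|u\|_s$, $|\varphi_n|_2\to|u|_2$, and $|\varphi_n|_1\to|u|_1$ (the last because $\Omega$ is bounded), apply the crude estimate of the second paragraph to $\varphi_n-u$ to get $\varphi_n\to u$ in $\mathbb{H}(\Omega)$, and then pass to the limit in the two inequalities of the previous paragraph using that $\mathcal{E}(\cdot,\cdot)$ and $\mathcal{E}_L(\cdot,\cdot)$ are continuous bilinear forms on $\mathbb{H}(\Omega)$ (for $\mathcal{E}_L$ this follows from \eqref{eq:bilog} and the continuous embedding $\mathbb{H}(\Omega)\hookrightarrow L^2(\Omega)$), which yields both bounds for $u$. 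The main obstacle is obtaining the constant \emph{exactly} $s^{-1}$ rather than $C(N)s^{-1}$: this is what forces the Fourier detour and the sharp inequality $\ln r\leq r-1$ instead of the naive kernel comparison; a secondary subtlety is the density of $\mathcal{C}^\infty_c(\Omega)$ in $\cH^s_0(\Omega)$, which is where the Lipschitz regularity of $\Omega$ enters (and which is somewhat delicate when $s=\tfrac12$).
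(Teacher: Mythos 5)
Your proof is correct; note that the paper does not prove this lemma at all---it is quoted from \cite[Lemma 3.5]{HSS22}---and your route (the Fourier representation \eqref{eq:bilogf} together with $\ln r\le r-1$ to get the sharp factor $s^{-1}$, the identity \eqref{eq:bilog} to pass from $\mathcal{E}_L$ to $\mathcal{E}$ and absorb the far-field and $\rho_N$ terms into $C_2(\Omega)|u|_2^2$, and a crude kernel comparison plus density of $\mathcal{C}^\infty_c(\Omega)$ in $\cH^s_0(\Omega)$ to reach general $u$) is exactly the standard argument behind the cited result. The only cosmetic caveat is a factor-of-two normalization: read literally, \eqref{sf} puts $c_{N,s}$ rather than $\tfrac{c_{N,s}}{2}$ in front of the double integral, so the Plancherel identity $\|\varphi\|_s^2=\int_{\R^N}|\xi|^{2s}|\widehat{\varphi}(\xi)|^2\,d\xi$ you invoke corresponds to the convention the paper actually uses elsewhere (e.g.\ in \eqref{eq:723} and \eqref{eq:85}); this discrepancy is inherited from the paper, not introduced by you, and does not affect the validity of your argument.
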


\subsection{Uniform bounds}

To prove Theorem~\ref{main:power} we need some uniform regularity a priori estimates and a fine analysis of the constants involved.

\begin{lemma}\label{reg:lem}
Let $s\in(0,\frac{1}{4})$, $g\in  L^{N/s^2}(\Omega)$, and let $u$ be a weak solution of $(-\Delta)^s u=g$ in $\Omega$ and $u=0$ in $\mathbb R^N\setminus \Omega.$ Then, 
\begin{align}\label{reg:eq}
\|u\|_{L^\infty(\Omega)}\leq \Big(1+\Big(\ln(R^2)+\tfrac{1}{2}-\rho_{N}\Big)s+o(s)\Big)\|g\|_{L^{N/s^2}(\Omega)}\qquad \text{ as }s\to 0^+,
\end{align}
where $R:=2\operatorname{diam}(\Omega)$ and $\rho_N$ is given in \eqref{rhon}.
\end{lemma}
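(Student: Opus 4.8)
The plan is to obtain the $L^\infty$ bound by combining the Green function representation for $(-\Delta)^s$ on $\Omega$ with a sharp asymptotic analysis of the relevant constants as $s\to 0^+$. First I would write $u(x)=\int_\Omega G_{\Omega,s}(x,y)g(y)\,dy$, where $G_{\Omega,s}$ is the Green function of $(-\Delta)^s$ on $\Omega$ with exterior Dirichlet condition; then H\"older's inequality with exponents $\frac{N}{N-s^2}$ and $\frac{N}{s^2}$ gives
\begin{align*}
\|u\|_{L^\infty(\Omega)}\leq \|g\|_{L^{N/s^2}(\Omega)}\,\sup_{x\in\Omega}\left(\int_\Omega G_{\Omega,s}(x,y)^{\frac{N}{N-s^2}}\,dy\right)^{\frac{N-s^2}{N}}.
\end{align*}
So the whole problem reduces to estimating that last supremum and showing it equals $1+(\ln(R^2)+\tfrac12-\rho_N)s+o(s)$ as $s\to0^+$.

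The key input is the pointwise bound $0\le G_{\Omega,s}(x,y)\le G_{s}(x-y)=\kappa_{N,s}|x-y|^{s^2-N}$ by the Green function of $(-\Delta)^s$ on all of $\R^N$ (valid since $\Omega$ is bounded, hence $2s<N$ is harmless because we are taking $s$ small; more precisely the free Green function has the homogeneity $|z|^{2s-N}$ — wait, we need $|z|^{s^2-N}$, so actually the exponent $\frac{N}{N-s^2}$ is chosen precisely so that $G_s(x-y)^{N/(N-s^2)}$ has the integrable singularity $|x-y|^{(2s-N)N/(N-s^2)}$; I would verify $(2s-N)\frac{N}{N-s^2}>-N$, equivalently $2s>s^2$, which holds for $s\in(0,2)$). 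Then, using that for $y$ ranging over $\Omega$ one has $|x-y|\le \operatorname{diam}(\Omega)$, I would integrate the explicit power of $|x-y|$ over a ball of radius $\operatorname{diam}(\Omega)$ around $x$ (which contains $\Omega$), producing an explicit elementary expression in $s$, $N$, and $\operatorname{diam}(\Omega)$ involving the constant $\kappa_{N,s}$ (which contains $4^{-s}\Gamma(\tfrac{N}{2}-s)/\Gamma(s)$-type Gamma factors). The appearance of $R=2\operatorname{diam}(\Omega)$ rather than $\operatorname{diam}(\Omega)$ suggests one should be slightly more careful — e.g. estimate over $B_{R}(x)$ or use a known cruder-but-clean bound $G_{\Omega,s}(x,y)\le \kappa_{N,s}|x-y|^{s^2-N}$ together with $\Omega\subset B_{\operatorname{diam}(\Omega)}(x)$ and absorb a factor $2$ — the exact route would be dictated by matching the stated constant.

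Then I would carry out the asymptotic expansion: write the bound as $\big(C(N,s)\,\rho(\operatorname{diam}\Omega,s)\big)^{(N-s^2)/N}$ and expand each factor using $\Gamma$-function asymptotics near the relevant points together with $a^{s}=1+s\ln a+o(s)$ and $(1+as+o(s))^{(N-s^2)/N}=1+as+o(s)$. The Euler--Mascheroni constant $\gamma$ and the digamma value $\psi(\tfrac N2)$ enter through $\Gamma'(1)=-\gamma$ and $\Gamma'(\tfrac N2)/\Gamma(\tfrac N2)=\psi(\tfrac N2)$, which is exactly how $\rho_N=2\ln 2+\psi(\tfrac N2)-\gamma$ is built; the $2\ln 2$ comes from $4^{-s}=e^{-2s\ln2}$, the $\ln(R^2)$ from $R^{2s}$, and the $\tfrac12$ from the $s^2$-correction in the exponent $\frac{N-s^2}{N}$ interacting with a $\ln$ term or from a $\frac{1}{N-2s}$-type factor expanded to first order. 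I would collect all first-order-in-$s$ contributions and check they sum to $\ln(R^2)+\tfrac12-\rho_N$.

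The main obstacle I anticipate is the bookkeeping in the last step: several multiplicative factors each contribute an $O(s)$ term after taking a fractional power, and one must be scrupulous that no contribution is dropped and that the $\gamma$ and $\psi(\tfrac N2)$ terms appear with the correct signs — this is where a sign error or a missing $\ln$ term would corrupt the constant. A secondary technical point is justifying the Green function domination $0\le G_{\Omega,s}\le G_s$ and the finiteness/uniformity of the integral $\int_\Omega G_{\Omega,s}(x,y)^{N/(N-s^2)}dy$ uniformly in $x\in\Omega$ and in $s$ near $0$; this is standard but should be stated cleanly, perhaps by simply quoting the explicit free Green function of $(-\Delta)^s$ and the maximum principle. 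Everything else — the Green representation, H\"older, and the elementary radial integral — is routine.
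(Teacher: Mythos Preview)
Your proposal is correct and follows essentially the same route as the paper: bound $|u|$ pointwise by the Riesz potential $c_{N,-s}\int_\Omega |g(y)||x-y|^{2s-N}\,dy$, apply H\"older with exponents $\tfrac{N}{s^2}$ and $\tfrac{N}{N-s^2}$, evaluate the resulting radial integral over a ball containing $\Omega-x$, and then Taylor-expand the explicit constant using $\Gamma$-function asymptotics to extract the coefficient $\ln(R^2)+\tfrac12-\rho_N$. The paper reaches the Riesz potential bound via the free fundamental solution on $\R^N$ together with the comparison principle rather than via the Green function inequality $G_{\Omega,s}\le G_s$, but these are equivalent here; note also that the correct singularity of the fundamental solution is $|x-y|^{2s-N}$ (not $|x-y|^{s^2-N}$ as in your first line, which you then silently correct), and the factor $R=2\operatorname{diam}(\Omega)$ arises in the paper simply from the crude inclusion $\Omega\subset B_{R/2}(0)$ after translating $0\in\Omega$.
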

\begin{proof}
For the first part of the proof, we argue as in \cite[Proposition 1.2]{FRRO16}. We consider the problem
\begin{align}\label{vg}
(-\Delta)^s v=|g|\qquad \text{ in $\R^N$,}
\end{align}
 where $g$ has been extended by zero outside $\Omega$. Using the fundamental solution (see, e.g., \cite[Theorem 5]{Sti19} or \cite[Definition 5.6]{AJS16}), we have the function $v:\R^N\to\R$ given by
\begin{equation}\label{eq:def_v}
v(x)=c_{N,-s}\int_{\Omega}\frac{|g(y)|}{|x-y|^{N-2s}}\, dy,\qquad c_{N,-s}=\frac{\Gamma(\tfrac{N}{2}-s)}{4^s\Gamma(s)\pi^{N/2}},
\end{equation}
is one solution for \eqref{vg} (note that there can be other solutions for \eqref{vg}).  Observe that $v\geq 0$ and, by the comparison principle, $-v\leq u\leq v$, since $-|g|\leq g \leq |g|$. From \eqref{eq:def_v} and H\"older's inequality, we have, for $x\in\Omega$, that
\begin{equation}
0\leq |u(x)|< v(x)=c_{N,-s}\int_{\Omega}\frac{|g(y)|}{|x-y|^{N-2s}}\, dy \leq c_{N,-s} \|g\|_{L^{N/s^2}(\Omega)} \left(\int_{\Omega}|x-y|^{(2s-N)q }d{y}\right)^{1/q},
\end{equation}
where $q=\frac{N}{N-s^2}$.  Without loss of generality, assume that $0\in \Omega$ and let $R:=2\operatorname{diam}(\Omega)>0$. Then $\Omega\subset B_{R/2}(0)$ and, for $x\in \Omega$,
\begin{align*}
\int_{\Omega}|x-y|^{(2s-N)q}\, dy
&\leq\int_{B_R}|y|^{(2s-N)q}\, dy
= |\mathbb S^{N-1}| \int_{0}^{R} \rho^{(2s-N)q}\rho^{N-1}\, d\rho\\
&=\frac{2\pi^{\frac{N}{2}}}{\Gamma(\tfrac{N}{2})}\frac{R^{N(1-q)+2qs}}{N(1-q)+2q s}
=\frac{2\pi^{\frac{N}{2}}}{\Gamma(\tfrac{N}{2})}\frac{R^{t(s)}}{t(s)},
\end{align*}
where $t(s):=N(1-q)+2q s=\frac{N(2-s)s}{N-s^2}$ and $|\mathbb S^{N-1}|=\frac{2\pi^{\frac{N}{2}}}{\Gamma(\frac{N}{2})}.$  Thus, we have proved that $\|u\|_{L^\infty(\Omega)}\leq C_1 \|g\|_{L^{N/s^2}(\Omega)}$, where
\begin{align*}
C_1&=C_1(\Omega,N,s,p)
=\left(\frac{2\pi^{\frac{N}{2}}}{\Gamma(\tfrac{N}{2})}\right)^{\frac{N-s^2}{N}} \frac{\Gamma(\frac{N}{2}-s)}{4^s \Gamma(s)\pi^{\frac{N}{2}}} \left(\frac{R^{t(s)}}{t(s)}\right)^{\frac{N-s^2}{N}}
=:h(s).
\end{align*}
Then, $C_1=h(0)+sh'(0)+o(s)$ as $s\to 0^+$.  A direct calculation shows that $h(0)=\lim_{s\to 0^+}h(s)=1$ and
\begin{align*}
h'(0)
=\lim_{s\to 0^+}h'(s)
=\ln(R^2)+\gamma +\frac{1}{2}-2\ln(2)-\psi\left(\tfrac{N}{2}\right)
=\ln(R^2)+\frac{1}{2}-\rho_{N},
\end{align*}
where $\rho_N$ is given in \eqref{rhon}.  This ends the proof.
\end{proof}

\begin{proposition}\label{prop1}
Let $\Omega\subset\R^N$ be a bounded domain, let $(s_n)_{n\in\N}\subset(0,1)$, $(p_{n})_{n\in \N}\subset (1,2)$ be such that
$\lim_{n\to \infty} s_n =0$, $k:=\lim_{n\to \infty}\frac{s_n}{2-p_n}\in[0,\infty),$ and let $u_n$ be a weak solution of
\begin{equation}\label{eq:nonlinear_ellip}
(-\Delta)^{s_n} u_{n}= |u_n|^{p_{n}-2}u_n \quad \text{in } \Omega, \qquad
u_n=0 \quad \text{in } \R^{N}\setminus \Omega.
\end{equation}
Then $|u_n|_\infty\leq (R^{2}e^{\frac{1}{2}-\rho_{N}})^k+o(1)$ as $n\to\infty,$ where $R:=2\operatorname{diam}(\Omega)$.
\end{proposition}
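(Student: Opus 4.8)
The plan is to feed the nonlinearity of \eqref{eq:nonlinear_ellip} into the sharp regularity bound of Lemma~\ref{reg:lem} and then unravel the resulting recursive inequality for $|u_n|_\infty$, tracking carefully the first-order term in $s_n$. First, since $p_n\in(1,2)$, the map $t\mapsto|t|^{p_n-2}t$ is dominated by a linear function, so the standard bootstrap $L^\infty$-regularity theory for the fractional Laplacian shows that each weak solution $u_n$ of \eqref{eq:nonlinear_ellip} lies in $L^\infty(\Omega)$; set $M_n:=|u_n|_\infty$. If $M_n=0$ the bound is trivial, so assume $M_n>0$. Writing $g_n:=|u_n|^{p_n-2}u_n$, we have $|g_n|=|u_n|^{p_n-1}\le M_n^{p_n-1}$ a.e.\ in $\Omega$, hence $g_n\in L^{N/s_n^2}(\Omega)$ with $\|g_n\|_{L^{N/s_n^2}(\Omega)}\le M_n^{p_n-1}\,|\Omega|^{s_n^2/N}$.

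Next, for $n$ large enough that $s_n<\tfrac14$, I would apply Lemma~\ref{reg:lem} with $s=s_n$ and $g=g_n$ to obtain, as $n\to\infty$,
\[
M_n\le\Big(1+\big(\ln(R^2)+\tfrac12-\rho_N\big)s_n+o(s_n)\Big)M_n^{p_n-1}\,|\Omega|^{s_n^2/N},
\]
where the error $o(s_n)$ comes only from the Taylor expansion at $s=0$ of the function $h$ in the proof of Lemma~\ref{reg:lem} and hence depends on $\Omega$ and $N$, but not on $n$ or on $u_n$. Dividing by $M_n^{p_n-1}>0$ gives $M_n^{2-p_n}\le a_n$, with $a_n:=\big(1+(\ln(R^2)+\tfrac12-\rho_N)s_n+o(s_n)\big)|\Omega|^{s_n^2/N}\to1$.

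Finally, taking logarithms, dividing by $2-p_n>0$, and using $\ln(1+x)=x+o(x)$,
\[
\ln M_n\le\frac{\ln a_n}{2-p_n}=\frac{s_n}{2-p_n}\Big(\ln(R^2)+\tfrac12-\rho_N+\tfrac{s_n}{N}\ln|\Omega|+\tfrac{o(s_n)}{s_n}\Big).
\]
By hypothesis $\frac{s_n}{2-p_n}\to k\in[0,\infty)$, while the parenthesis tends to $\ln(R^2)+\tfrac12-\rho_N$, so the right-hand side converges to $k\big(\ln(R^2)+\tfrac12-\rho_N\big)=\ln\big((R^2e^{\frac12-\rho_N})^k\big)$. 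Exponentiating yields $M_n\le(R^2e^{\frac12-\rho_N})^k(1+o(1))=(R^2e^{\frac12-\rho_N})^k+o(1)$ as $n\to\infty$, since $(R^2e^{\frac12-\rho_N})^k$ is a finite constant; this is the claim.

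The main obstacle is the indeterminate form $1^\infty$: the constant $a_n$ tends to $1$, but the exponent $\frac{1}{2-p_n}$ diverges because $s_n\to0$, so merely dropping the first-order correction in Lemma~\ref{reg:lem} destroys all information. What rescues the argument is precisely that Lemma~\ref{reg:lem} supplies the exact coefficient of $s$, and that the standing hypothesis $\frac{s_n}{2-p_n}\to k$ is exactly the rate at which this correction balances the divergent exponent; passing to logarithms is the clean way to make this balance rigorous, and it automatically absorbs the harmless factors $|\Omega|^{s_n^2/N}$ and $o(s_n)$. The only other genuine ingredient is the a priori $L^\infty$-membership of each individual $u_n$, which is standard but is needed both to place $g_n$ in $L^{N/s_n^2}(\Omega)$ and to legitimately divide by $M_n^{p_n-1}$.
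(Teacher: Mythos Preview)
Your proof is correct and follows essentially the same route as the paper: apply Lemma~\ref{reg:lem} to the right-hand side $|u_n|^{p_n-2}u_n$, bound the $L^{N/s_n^2}$ norm by $|u_n|_\infty^{p_n-1}|\Omega|^{s_n^2/N}$, and extract $|u_n|_\infty$ from the resulting recursive inequality. The only cosmetic difference is that the paper writes the bound as $|u_n|_\infty\le\big((1+s_nC_1+o(s_n))^{1/s_n}|\Omega|^{s_n/N}\big)^{s_n/(2-p_n)}$ and then invokes the limit $\lim_{s\to0^+}(1+sa+o(s))^{1/s}=e^a$ (cited from \cite[Lemma~3.1]{HSS22}), whereas you pass to logarithms to linearize the same computation; both arguments are equivalent.
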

\begin{proof}
By \cite[Proposition 8.1]{RSV17}, $u_n\in L^\infty(\R^N)$. 
Let $C_1=\ln(R^2)+\frac{1}{2}-\rho_{N}$, where $\rho_N$ is given by \eqref{rhon} and $R:=2\operatorname{diam}(\Omega)>0$. By Lemma~\ref{reg:lem}, for $n$ sufficiently large,
\begin{align*}
|u_n|_\infty &\leq (1+s_nC_1+o(s_n))||u_n|^{p_{n}-1}|_{\frac{N}{s_n^2}} = (1+s_nC_1+o(s_n)) \left(\int_\Omega |u_n|^{\frac{N}{s_n^2}(p_{n}-1)}\ dx \right)^\frac{s_n^2}{N}\\
&\leq (1+s_nC_1+o(s_n))|u_n|_\infty^{p_{n}-1} |\Omega|^\frac{s_n^2}{N}.
\end{align*}
Then, $|u_n|_\infty
\leq  \left((1+s_nC_1+o(s_n))^\frac{1}{s_n}|\Omega|^\frac{s_n}{N}\right)^\frac{s_n}{2-p_n}$.  Let $k=\lim\limits_{n\to \infty}\frac{s_n}{2-p_n} \geq 0$, then 
\begin{align*}
    \lim_{n\to\infty}\left((1+s_nC_1+o(s_n))^\frac{1}{s_n}|\Omega|^\frac{s_n}{N}\right)^\frac{s_n}{2-p_n}
    =e^{kC_1}=(R^{2}e^{\frac{1}{2}-\rho_{N}})^k
\end{align*}
(see \cite[Lemma 3.1]{HSS22}), as claimed. 
\end{proof}

\subsection{Upper and lower energy bounds}

Now we show lower and upper energy bounds for the unique positive solution $u_n$ of \eqref{eq:peq}.  The lower bound is used in the proof of Theorem~\ref{case2}, the upper bound is presented as a result of independent interest and for comparison with the bound given in Proposition \ref{prop1}.

In the following, for each $s\in(0,\tfrac{1}{4})$, $\varphi_{s}$ denotes the first Dirichlet eigenfunction of the fractional Laplacian (normalized in $L^2$-sense) and $\lambda_{1,s}$ its first eigenvalue, that is,
\begin{align}\label{fef}
(-\Delta )^{s}\varphi_{s}=\lambda_{1,s}\varphi_{s}\quad \text{ in $\Omega$},\qquad \varphi_{s}=0\quad \text{ on }\mathbb{R}^{N}\setminus\Omega,\qquad |\varphi_s|_2^2=1.
\end{align}
Due to the variational formulation of the first eigenvalue,
\begin{align}
\label{eq:poincin}
|u|_{2}^{2}\leq\frac{1}{\lambda_{1,s}}\|u\|_{s}^{2}\qquad \text{ for every $u\in\mathcal{H}_{0}^{s}(\Omega)$ and for each $s\in(0,\tfrac{1}{4})$}.
\end{align}

\begin{lemma}
\label{bounds}
Let $(s_n)_{n\in\N}\subset (0,1)$ be such that $\lim\limits_{n\to \infty}s_n=0$, $(p_n)_{n\in \N}\subset (1,2),$ and let $u_{n}$ be a positive solution of \eqref{eq:nonlinear_ellip} then,
\begin{align}
(\lambda_{1,s_n})^{\frac{p_{n}}{p_n-2}}|\Omega|\geq \|u_{n}\|_{s_{n}}^{2}&\geq \lambda_{1,s_n}|\varphi_{s_{n}}|_{2}^{2}\left(\frac{2}{p_{n}}\frac{|\varphi_{s_{n}}|_{p_{n}}^{p_{n}}}{\lambda_{1,s_n}|\varphi_{s_{n}}|_{2}^{2}}\right)^{\frac{2}{2-p_{n}}}
\frac{2^{p_n-2}-1}{p_n-2} \frac{p_n}{2^{p_n}}.\label{eq:71}
\end{align}
\end{lemma}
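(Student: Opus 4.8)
The plan is to exploit the variational characterization of $u_n$ as the unique positive critical point of $J_{s_n}$, which is obtained by global minimization. Since $p_n\in(1,2)$, the functional $J_{s_n}$ restricted to rays $t\mapsto J_{s_n}(tw)=\tfrac{t^2}{2}\|w\|_{s_n}^2-\tfrac{t^{p_n}}{p_n}|w|_{p_n}^{p_n}$ attains a negative global minimum in $t$, and the scaling invariance of this one-dimensional problem will produce both bounds. First I would note that, testing the equation $(-\Delta)^{s_n}u_n=u_n^{p_n-1}$ against $u_n$ itself, one gets $\|u_n\|_{s_n}^2=|u_n|_{p_n}^{p_n}$, so that $J_{s_n}(u_n)=(\tfrac12-\tfrac1{p_n})\|u_n\|_{s_n}^2<0$. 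Combining this with the minimality $J_{s_n}(u_n)\le J_{s_n}(tw)$ for every admissible $w$ and every $t>0$ will give the estimates once $w$ is chosen suitably.

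For the \textbf{upper bound}, I would plug in $w=\chi_\Omega$ (or, to stay inside $\mathcal H^{s_n}_0(\Omega)$, a smooth approximation and then pass to the limit; alternatively work directly with the test function since $\chi_\Omega\in\mathcal H^s_0(\Omega)$ is not admissible, so more care is needed — one uses that $J_{s_n}(u_n)\le \inf_t J_{s_n}(t\varphi_{s_n})$ anyway). Using the eigenfunction $w=\varphi_{s_n}$ with $\|\varphi_{s_n}\|_{s_n}^2=\lambda_{1,s_n}$ and $|\varphi_{s_n}|_2^2=1$: minimizing $t\mapsto \tfrac{t^2}{2}\lambda_{1,s_n}-\tfrac{t^{p_n}}{p_n}|\varphi_{s_n}|_{p_n}^{p_n}$ over $t>0$ gives the optimal $t_*$ with $t_*^{2-p_n}=|\varphi_{s_n}|_{p_n}^{p_n}/\lambda_{1,s_n}$, hence
$$
\Big(\tfrac12-\tfrac1{p_n}\Big)\|u_n\|_{s_n}^2 = J_{s_n}(u_n)\le J_{s_n}(t_*\varphi_{s_n}) = -\frac{2-p_n}{2p_n}\,\lambda_{1,s_n}^{-\frac{p_n}{2-p_n}}\,|\varphi_{s_n}|_{p_n}^{\frac{2p_n}{2-p_n}},
$$
and rearranging (both sides are negative, so the inequality flips appropriately) produces a lower bound of the form $\|u_n\|_{s_n}^2\ge \lambda_{1,s_n}|\varphi_{s_n}|_2^2\big(\tfrac{2}{p_n}\tfrac{|\varphi_{s_n}|_{p_n}^{p_n}}{\lambda_{1,s_n}|\varphi_{s_n}|_2^2}\big)^{\frac{2}{2-p_n}}$ up to the explicit numerical factor $\tfrac{2^{p_n-2}-1}{p_n-2}\tfrac{p_n}{2^{p_n}}$; that last factor I expect comes not from $\varphi_{s_n}$ but from a second comparison, testing against $2\varphi_{s_n}$ or comparing $J_{s_n}(u_n)$ with $J_{s_n}$ evaluated at a dyadic multiple, which is exactly where the quantity $2^{p_n-2}-1$ naturally appears. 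So the lower bound in \eqref{eq:71} is obtained by combining the ray-minimization inequality with one extra evaluation at a scaled competitor.

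For the \textbf{upper bound} $\|u_n\|_{s_n}^2\le \lambda_{1,s_n}^{\frac{p_n}{p_n-2}}|\Omega|$, I would instead use the equation directly together with the Poincaré-type inequality \eqref{eq:poincin}: from $\|u_n\|_{s_n}^2=|u_n|_{p_n}^{p_n}$ and Hölder $|u_n|_{p_n}^{p_n}\le |u_n|_2^{p_n}|\Omega|^{1-p_n/2}$, then $|u_n|_2^2\le \lambda_{1,s_n}^{-1}\|u_n\|_{s_n}^2$, one gets $\|u_n\|_{s_n}^2\le \lambda_{1,s_n}^{-p_n/2}\|u_n\|_{s_n}^{p_n}|\Omega|^{1-p_n/2}$, i.e. $\|u_n\|_{s_n}^{2-p_n}\le \lambda_{1,s_n}^{-p_n/2}|\Omega|^{1-p_n/2}$; raising to the power $\tfrac{2}{2-p_n}$ (note $2-p_n>0$) and simplifying the exponent of $|\Omega|$ — here one should double-check whether the stated power $|\Omega|^1$ rather than $|\Omega|$ to a fractional power is intended, likely absorbing a harmless constant or using $|\Omega|^{1}\ge|\Omega|^{\frac{2-p_n}{2}\cdot\frac{2}{2-p_n}}$ trivially when $|\Omega|\le 1$, or the bound is simply stated in a slightly lossy but clean form — yields $\|u_n\|_{s_n}^2\le \lambda_{1,s_n}^{\frac{-p_n}{2-p_n}}|\Omega|=\lambda_{1,s_n}^{\frac{p_n}{p_n-2}}|\Omega|$.

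The main obstacle I anticipate is pinning down the precise constant $\tfrac{2^{p_n-2}-1}{p_n-2}\tfrac{p_n}{2^{p_n}}$ in the lower bound: it is clearly the value (or a bound for the value) of the one-dimensional function $t\mapsto \tfrac{t^2}{2}-\tfrac{t^{p_n}}{p_n}$ — or a difference of such values at $t=1$ and $t=2$ — evaluated along the eigenfunction ray, and getting the bookkeeping of which comparison point ($t_*$, $1$, or $2$) yields exactly this factor, with the inequality pointing in the right direction, is the delicate part. Everything else is the soft variational comparison $J_{s_n}(u_n)\le J_{s_n}(\text{competitor})$ plus the Poincaré inequality \eqref{eq:poincin} and Hölder, which are routine.
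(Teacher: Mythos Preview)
Your plan is correct and essentially matches the paper's argument. Two clarifications resolve the points where you hesitate. For the upper bound, your direct route via $\|u_n\|_{s_n}^2=|u_n|_{p_n}^{p_n}$, H\"older, and \eqref{eq:poincin} is exact: the exponent of $|\Omega|$ is $\tfrac{2-p_n}{2}\cdot\tfrac{2}{2-p_n}=1$, so you land precisely on $\lambda_{1,s_n}^{p_n/(p_n-2)}|\Omega|$ with no loss; the paper obtains the same bound by instead minimizing the global lower envelope $t\mapsto \tfrac{t^2}{2}-\tfrac{1}{p_n}\big(\lambda_{1,s_n}^{-1/2}|\Omega|^{(2-p_n)/(2p_n)}\big)^{p_n}t^{p_n}$ for $J_{s_n}$, which is the same computation in disguise. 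For the lower bound, your guess about the dyadic factor is right: the paper does not evaluate $J_{s_n}(t\varphi_{s_n})$ at the ray minimizer $t_*$ but at $t_n=\tfrac12\big(\tfrac{2}{p_n}\tfrac{|\varphi_{s_n}|_{p_n}^{p_n}}{\lambda_{1,s_n}|\varphi_{s_n}|_2^2}\big)^{1/(2-p_n)}$, i.e.\ half of the positive zero of $J_{s_n}(\cdot\,\varphi_{s_n})$, and a short computation then gives exactly the constant $\tfrac{2^{p_n-2}-1}{p_n-2}\,\tfrac{p_n}{2^{p_n}}$. Your alternative of using $t_*$ itself would in fact give the cleaner (and sharper) bound $\|u_n\|_{s_n}^2\ge \lambda_{1,s_n}|\varphi_{s_n}|_2^2\big(\tfrac{|\varphi_{s_n}|_{p_n}^{p_n}}{\lambda_{1,s_n}|\varphi_{s_n}|_2^2}\big)^{2/(2-p_n)}$, which is equally sufficient for the downstream applications.
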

\begin{proof}
Let $a_{n}:=\lambda_{1,s_n}|\varphi_{s_{n}}|_{2}^{2},$ $b_{n}:=|\varphi_{s_{n}}|_{p_{n}}^{p_{n}},$ $t>0$, and note that
\begin{align*}
J_{s_{n}}(t\varphi_{s_{n}})
=\frac{t^2}{2}\|\varphi_{s_{n}}\|_{s_n}^2-\frac{t^{p_n}}{p_n}|\varphi_{s_{n}}|_{s_n}^{s_n}
=t^2\frac{\lambda_{1,s_n}}{2}|\varphi_{s_{n}}|_{2}^2-\frac{t^{p_n}}{p_n}|\varphi_{s_{n}}|_{s_n}^{s_n}
=t^2\left(\frac{a_n}{2}-t^{p_n-2}\frac{b_n}{p_n}\right).
\end{align*}
Then $J_{s_{n}}(t\varphi_{s_{n}})<0$ if $t<(\frac{2}{p_n}\frac{b_n}{a_n})^\frac{1}{2-p_n}$.
Let $u_n$ be a positive solution of \eqref{eq:nonlinear_ellip}.  Since the least energy solution is the unique positive solution of \eqref{eq:nonlinear_ellip} (see \cite[Section 6]{BFMST18}), we have that $u_n$ is the least energy solution. Let $t_n:=\frac{1}{2}\left(\frac{2}{p_{n}}\frac{b_{n}}{a_{n}}\right)^{\frac{1}{2-p_{n}}}$, then
\begin{align}
\left(\frac{1}{2}-\frac{1}{p_{n}}\right)\|u_n\|_{s_{n}}^{2}=J_{s_{n}}(u_n)\leq J_{s_{n}}(t_n\varphi_{s_{n}})=\frac{a_n}{4}\left(\frac{2}{p_{n}}\frac{b_{n}}{a_{n}}\right)^{\frac{2}{2-p_{n}}}
\left(
\frac{1}{2}-\frac{1}{2^{p_n-1}}
\right)
\label{eq:74}
\end{align}
and the lower bound in \eqref{eq:71} follows.  On the other hand, by \eqref{eq:poincin}, for every $u\in\mathcal{H}_{0}^{s_{n}}(\Omega)$,
\begin{align}
J_{s_{n}}(u)&=\frac{1}{2}\|u\|^{2}_{s_{n}}-\frac{1}{p_{n}}|u|_{p_{n}}^{p_{n}}\geq \frac{1}{2}\|u\|^{2}_{s_{n}}-\frac{1}{p_{n}}C(s_{n},p_{n},\Omega)^{p_{n}}\|u\|_{s_{n}}^{p_{n}},\label{eq:75}
\end{align}
where $C(s_{n},p_{n},\Omega):=(\lambda_{1,s_n})^{-\tfrac{1}{2}}|\Omega|^{\frac{2-p_{n}}{2p_{n}}}$.
For $t\geq 0$ let $f(t):=\frac{1}{2}t^{2}-\frac{1}{p_{n}}C(s_{n},p_{n},\Omega)^{p_{n}}t^{p_{n}}$. Then,  $f^{\prime}(t)=t-C(s_{n},p_{n},\Omega)^{p_{n}}t^{p_{n}-1}=0$ implies that $t_{0}=\left(\frac{1}{C(s_{n},p_{n},\Omega)^{p_{n}}}\right)^{\frac{1}{p_{n}-2}}$ is a critical point of $f$. By computing the second derivative and evaluating we obtain that $f^{\prime\prime}(t_{0})=1-C(s_{n},p_{n},\Omega)^{p_{n}}(p_{n}-1)t_{0}^{p_{n}-2}=2-p_{n}>0,$
implying that $t_{0}$ is the minimizer for $f$. Using $t_{0}$ in $f$ we obtain a lower bound for the energy functional $J_{s_{n}}$, given by $f(t_{0})=\frac{p_{n}-2}{2p_{n}}\left(C(s_{n},p_{n},\Omega)^{p_{n}}\right)^{\frac{2}{2-p_{n}}}.$ Thus, for every $u\in\mathcal{H}_{0}^{s_{n}}(\Omega)$, it holds that
$J_{s_{n}}(u)\geq \frac{p_{n}-2}{2p_{n}}\left(C(s_{n},p_{n},\Omega)^{p_{n}}\right)^{\frac{2}{2-p_{n}}}$.  Therefore,
\begin{align*}
\left(\frac{1}{2}-\frac{1}{p_{n}}\right)\|u_{n}\|_{s_{n}}^{2}=J_{s_n}(u_{n})&\geq \frac{p_{n}-2}{2p_{n}}\left(C(s_{n},p_{n},\Omega)^{p_{n}}\right)^{\frac{2}{2-p_{n}}},
\end{align*}
and the upper bound in \eqref{eq:71} follows.
\end{proof}

Recall that $\varphi_{L}$ denotes the first Dirichlet eigenfunction of the logarithmic Laplacian (normalized in the $L^2$-sense) and $\lambda_{1}^{L}$ its corresponding eigenvalue, that is, $L_{\Delta}\varphi_{L}=\lambda_{1}^{L}\varphi_{L}$ in $\Omega$, $\varphi_{L}=0$ on $\mathbb{R}^{N}\setminus\Omega,$ and $|\varphi_L|_2^2=1$.

\begin{lemma}
\label{limit2}
Let $(s_n)_{n\in\N},$ $(p_n)_{n\in\N},$ and $\mu$ as in \eqref{snpn}, then $
\lim\limits_{n\rightarrow\infty}(\lambda_{1,s_n})^{\frac{p_{n}}{p_n-2}}=\exp\left(-{\frac{2\lambda_{1}^{L}}{\mu}}\right).$
\end{lemma}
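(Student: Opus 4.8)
The plan is to analyze the asymptotic behavior of $\lambda_{1,s_n}$ as $s_n\to 0^+$ and then feed this into the exponent $\frac{p_n}{p_n-2}$, which blows up since $p_n\to 2$. The starting point is the known first-order expansion of the fractional eigenvalue in terms of the logarithmic eigenvalue: by the results of \cite{CW19} (or the variational comparison via \eqref{eq:726} and Lemma~\ref{smallorder}), one has $\lambda_{1,s_n}\to 1$ and, more precisely, $\frac{\lambda_{1,s_n}-1}{s_n}\to \lambda_1^L$ as $n\to\infty$; equivalently $\lambda_{1,s_n}=1+\lambda_1^L s_n+o(s_n)$. I would justify this by taking $\varphi_{s_n}$ and $\varphi_L$ as competitors in the respective Rayleigh quotients: using $\|\varphi\|_{s_n}^2=|\varphi|_2^2+s_n\mathcal{E}_L(\varphi,\varphi)+o(s_n)$ for fixed smooth $\varphi$ (which follows from the Fourier representation, as in the proof of Lemma~\ref{haprox}, estimate \eqref{eq:85}) gives the upper bound $\limsup_n \frac{\lambda_{1,s_n}-1}{s_n}\le \lambda_1^L$, and a lower bound comes from Lemma~\ref{smallorder} applied to a sequence of (almost-)minimizers together with the compactness of $\mathbb{H}(\Omega)\hookrightarrow L^2(\Omega)$, exactly the type of argument used for Theorem~\ref{case2}.

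Granting the expansion $\lambda_{1,s_n}=1+\lambda_1^L s_n+o(s_n)$, the rest is a computation with the exponent. Write
\begin{align*}
(\lambda_{1,s_n})^{\frac{p_n}{p_n-2}}=\exp\left(\frac{p_n}{p_n-2}\ln(\lambda_{1,s_n})\right)
=\exp\left(\frac{p_n}{p_n-2}\,\ln\!\bigl(1+\lambda_1^L s_n+o(s_n)\bigr)\right).
\end{align*}
Since $\ln(1+\lambda_1^L s_n+o(s_n))=\lambda_1^L s_n+o(s_n)$ and $\frac{p_n}{p_n-2}=-\frac{p_n}{2-p_n}$ with $p_n\to 2$, the exponent equals
\begin{align*}
-\frac{p_n}{2-p_n}\bigl(\lambda_1^L s_n+o(s_n)\bigr)=-p_n\,\frac{s_n}{2-p_n}\,\lambda_1^L+o\!\left(\frac{s_n}{2-p_n}\right)\longrightarrow -2\cdot\frac{1}{\mu}\cdot\lambda_1^L=-\frac{2\lambda_1^L}{\mu},
\end{align*}
using $\lim_n p_n=2$ and $\lim_n \frac{2-p_n}{s_n}=\mu\in(0,\infty)$, hence $\lim_n \frac{s_n}{2-p_n}=\frac1\mu$. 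Exponentiating and invoking continuity of $\exp$ gives the claimed limit $\exp(-\frac{2\lambda_1^L}{\mu})$. One must also check that the $o(s_n)$ term, when divided by $2-p_n\sim \mu s_n$, still vanishes; this is immediate because $o(s_n)/(2-p_n)=o(s_n)/(\mu s_n+o(s_n))=o(1)$.

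I expect the main obstacle to be the justification of the \emph{precise} first-order expansion $\frac{\lambda_{1,s_n}-1}{s_n}\to\lambda_1^L$, rather than the mere convergence $\lambda_{1,s_n}\to 1$: the upper bound via a fixed smooth near-optimizer for $\lambda_1^L$ is routine, but the matching lower bound requires extracting a limit of the normalized fractional eigenfunctions and identifying it with $\varphi_L$, which uses the $\mathbb{H}(\Omega)$-boundedness from Lemma~\ref{smallorder} together with a lower-semicontinuity argument for $\mathcal{E}_L$ along the lines already developed for Theorem~\ref{case2}. Everything after the expansion is the elementary logarithm-and-exponent manipulation above, where the only subtlety is tracking that the error terms are genuinely negligible after multiplication by the blowing-up factor $\frac{p_n}{2-p_n}$, which holds under the hypothesis $\mu\in(0,\infty)$.
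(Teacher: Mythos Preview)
Your proposal is correct and follows essentially the same route as the paper: one uses the first-order expansion $\lambda_{1,s_n}=1+s_n\lambda_1^L+o(s_n)$ (which the paper simply cites from \cite[Theorem 1.5]{CW19} or \cite[Theorem 1.1]{FJW20} rather than reproving) and then the elementary limit $(1+sa+o(s))^{1/s}\to e^a$ combined with $\frac{s_n p_n}{p_n-2}\to -\frac{2}{\mu}$. Your logarithm computation is just an unpacking of this elementary limit, and your sketch of how to obtain the eigenvalue expansion via upper/lower Rayleigh-quotient bounds is exactly what is carried out in the cited references.
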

\begin{proof}The claim follows from the definition of $\mu$ and the fact that
\begin{align}\label{asymp}
\lambda_{1,s_n}=1+s_n\lambda_{1}^{L}+o(s_n)\qquad \text{ as $n\to\infty$ }
\end{align}
(see \cite[Theorem 1.5]{CW19} or \cite[Theorem 1.1]{FJW20}), because $\lim _{s \rightarrow 0^{+}}(1+s a+o(s))^{\frac{1}{s}}=e^a=\lim _{s \rightarrow 0^{+}}(1+s a)^{\frac{1}{s}}$ for all $a\neq 0$ (see, \emph{e.g.}, \cite[Lemma 3.1]{HSS22}).
\end{proof}

\begin{lemma}
\label{limit1}
Let $(s_n)_{n\in\N},$ $(p_n)_{n\in\N},$ and $\mu$ as in \eqref{snpn}, then
\begin{align*}
\lim_{n\rightarrow\infty}\left(\frac{2}{p_{n}}\frac{|\varphi_{s_{n}}|_{p_{n}}^{p_{n}}}{\lambda_{1,s_n}|\varphi_{s_{n}}|_{2}^{2}}\right)^{\frac{2}{2-p_{n}}}=\exp\left(-\frac{2\lambda_{1}^{L}}{\mu}-2\int_{\Omega}\ln(|\varphi_{L}|)|\varphi_{L}|^{2}\, dx+1\right).
\end{align*}
\end{lemma}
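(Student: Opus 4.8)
Since $|\varphi_{s_n}|_2^2=1$, the quantity under study is $\big(\tfrac{2}{p_n}\,|\varphi_{s_n}|_{p_n}^{p_n}\,\lambda_{1,s_n}^{-1}\big)^{\frac{2}{2-p_n}}$. The plan is to expand the base as $1+c\,s_n+o(s_n)$ for an explicit constant $c$, and then conclude with the elementary limit $\lim_{n\to\infty}(1+cs_n+o(s_n))^{1/s_n}=e^{c}$ (see \cite[Lemma 3.1]{HSS22}) after noting that the exponent factors as $\frac{2}{2-p_n}=\frac{2s_n}{2-p_n}\cdot\frac{1}{s_n}$ with $\frac{2s_n}{2-p_n}\to\frac{2}{\mu}$ by \eqref{snpn}. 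So it suffices to expand the three factors separately. From $2-p_n=\mu s_n+o(s_n)$ one gets $\tfrac{2}{p_n}=1+\tfrac{\mu}{2}s_n+o(s_n)$, and the asymptotics \eqref{asymp} give $\lambda_{1,s_n}=1+\lambda_1^L s_n+o(s_n)$.

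For the factor $|\varphi_{s_n}|_{p_n}^{p_n}$ I would argue exactly as in Lemma~\ref{lem2.2}: differentiating $t\mapsto\int_\Omega |\varphi_{s_n}|^t\,dx$ (legitimate thanks to the uniform bound below) and using the fundamental theorem of calculus,
\[
|\varphi_{s_n}|_{p_n}^{p_n}-|\varphi_{s_n}|_2^2=(p_n-2)\,B_n,\qquad B_n:=\int_0^1\!\!\int_\Omega \ln(|\varphi_{s_n}|)\,|\varphi_{s_n}|^{2+\tau(p_n-2)}\,dx\,d\tau .
\]
The crucial point is then to show $B_n\to A:=\int_\Omega \ln(|\varphi_L|)|\varphi_L|^2\,dx$. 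Here I would invoke that $\varphi_{s_n}\to\varphi_L$ in $L^2(\Omega)$ together with $M:=\sup_n|\varphi_{s_n}|_\infty<\infty$ (both standard in the small-order limit of the eigenvalue problem, see \cite{CW19,FJW20}): along any subsequence pass to a further subsequence with $\varphi_{s_n}\to\varphi_L$ a.e.\ in $\Omega$; for $n$ large one has $2+\tau(p_n-2)\ge p_n\ge \tfrac32$, so on $\{|\varphi_{s_n}|\le 1\}$ the integrand is dominated by $\sup_{t\in(0,1]}t^{3/2}|\ln t|<\infty$ and on $\{|\varphi_{s_n}|>1\}$ by $M^2\ln M$; dominated convergence (recalling $\ln(t^2)t\to0$ as $t\to0^+$, which identifies the a.e.\ limit of the integrand) gives $B_n\to A$ along the subsequence, and since $A$ is independent of it, along the whole sequence. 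Hence $|\varphi_{s_n}|_{p_n}^{p_n}=1+(p_n-2)B_n=1-\mu A\,s_n+o(s_n)$.

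Combining the three expansions, the base equals
\[
\Big(1+\tfrac{\mu}{2}s_n\Big)\big(1-\mu A s_n\big)\big(1+\lambda_1^L s_n\big)^{-1}+o(s_n)=1+\Big(\tfrac{\mu}{2}-\mu A-\lambda_1^L\Big)s_n+o(s_n),
\]
so raising to the power $\frac{2}{2-p_n}$ and letting $n\to\infty$ yields
\begin{align*}
\exp\!\Big(\tfrac{2}{\mu}\big(\tfrac{\mu}{2}-\mu A-\lambda_1^L\big)\Big)
&=\exp\!\Big(1-2A-\tfrac{2\lambda_1^L}{\mu}\Big)\\
&=\exp\!\Big(-\tfrac{2\lambda_1^L}{\mu}-2\!\int_\Omega \ln(|\varphi_L|)|\varphi_L|^2\,dx+1\Big),
\end{align*}
which is the asserted identity. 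The step I expect to be the real work is the convergence $B_n\to A$: it is the only non-algebraic ingredient, and justifying it requires both the $L^2$-convergence of the first eigenfunctions and a uniform $L^\infty$ (or at least uniform $L^{2+\delta}$) bound on them in order to dominate the integrand, whose logarithmic factor is singular at the origin.
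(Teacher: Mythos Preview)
Your proof is correct and follows essentially the same approach as the paper: both expand the three factors $\tfrac{2}{p_n}$, $|\varphi_{s_n}|_{p_n}^{p_n}$, and $\lambda_{1,s_n}^{-1}$ to first order in $s_n$, rely on the same key ingredient (the convergence $B_n\to A$ via dominated convergence using the $L^2$-convergence and uniform $L^\infty$-bound of $\varphi_{s_n}$ from \cite{FJW20}), and conclude with the elementary limit $(1+cs_n+o(s_n))^{1/s_n}\to e^c$. The only cosmetic difference is that the paper computes the limit of each factor raised to $\frac{2}{2-p_n}$ separately and then multiplies, whereas you first multiply out the expansions and then raise to the power; both routes are equivalent.
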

\begin{proof}
Note that
$
\left(\frac{2}{p_{n}}\right)^{\frac{2}{2-p_{n}}}
=
\left(1-s_n \frac{\mu}{2}+o(s_n)\right)^{\frac{2}{s_n(-\mu+o(1))}}\to e
$ and $(\frac{1}{\lambda_{1,s_n}})^{\frac{2}{2-p_{n}}}\to \exp\left(-{\frac{2\lambda_{1}^{L}}{\mu}}\right)$ as $n\to\infty.$ Moreover,
\begin{align*}
\frac{|\varphi_{s_{n}}|_{p_{n}}^{p_{n}}-|\varphi_{s_{n}}|_{2}^{2}}{s_n}=\frac{p_n-2}{s_n}\int_\Omega \int_0^1 \ln|\varphi_{s_n}| |\varphi_{s_n}|^{2+(p_n-2)\tau}\, d\tau\, dx
\to -\mu \int_\Omega \ln|\varphi_{L}| |\varphi_{L}|^{2}\, dx
\end{align*}
as $n\to\infty$, by dominated convergence, see \cite[Corollary 1.3 and Theorem 1.1 (ii)]{FJW20}. Therefore,
\begin{align*}
\left(\frac{|\varphi_{s_{n}}|_{p_{n}}^{p_{n}}}{|\varphi_{s_{n}}|_{2}^{2}}\right)^{\frac{2}{2-p_{n}}}
&=\left(1-s_n \frac{\mu}{|\varphi_L|_{2}^{2}+o(1)} \int_\Omega \ln|\varphi_{L}| |\varphi_{L}|^{2}\, dx+o(s_n)\right)^{\frac{2}{2-p_{n}}}\\
&\to \exp\left(-\frac{2}{|\varphi_L|_{2}^{2}} \int_\Omega \ln|\varphi_{L}| |\varphi_{L}|^{2}\, dx\right)\qquad \text{ as }n\to\infty.
\end{align*}
Thus, $\left(\frac{2}{p_{n}}\frac{|\varphi_{s_{n}}|_{p_{n}}^{p_{n}}}{\lambda_{1,s_n}|\varphi_{s_{n}}|_{2}^{2}}\right)^{\frac{2}{2-p_{n}}}\to \exp\left(-\frac{2\lambda_{1}^{L}}{\mu}-2\frac{\int_{\Omega}\ln(|\varphi_{L}|)|\varphi_{L}|^{2}\, dx}{|\varphi_{L}|_{2}^{2}}+1\right)$ as $n\to\infty$. The claim follows since $|\varphi_{L}|_{2}^{2}=1$.
\end{proof}

\begin{theorem}
\label{ebounds}
Let $(s_{n})_{n\in\N}$, $(p_n)_{n\in\N},$ $\mu,$ and $(u_{{n}})_{n\in\N}$ as in Theorem \ref{case2}, then
\begin{align*}
\frac{\ln(2)}{2}\exp\left(-\frac{2\lambda_{1}^{L}}{\mu}-2\int_{\Omega}\ln(|\varphi_{L}|)|\varphi_{L}|^{2}\, dx+1\right)|\varphi_{L}|_{2}^{2}+o(1)\leq\|u_{n}\|_{s_{n}}^{2}\leq|\Omega|\exp\left(-\frac{2\lambda_{1}^{L}}{\mu}\right)+o(1)
\end{align*}
as $n\rightarrow\infty$.
\end{theorem}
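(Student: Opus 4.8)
The plan is to derive both inequalities by plugging the two-sided estimate of Lemma~\ref{bounds} into the asymptotics computed in Lemmas~\ref{limit2} and~\ref{limit1} and the eigenvalue expansion~\eqref{asymp}, passing to the limit factor by factor. No new compactness or variational input is needed; everything is already in place.

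For the upper bound I would start from the left inequality in~\eqref{eq:71}, namely $\|u_{n}\|_{s_{n}}^{2}\le (\lambda_{1,s_n})^{\frac{p_n}{p_n-2}}|\Omega|$. By Lemma~\ref{limit2} the coefficient $(\lambda_{1,s_n})^{\frac{p_n}{p_n-2}}$ converges to $\exp(-\frac{2\lambda_1^L}{\mu})$ as $n\to\infty$, hence $\|u_{n}\|_{s_{n}}^{2}\le |\Omega|\exp(-\frac{2\lambda_1^L}{\mu})+o(1)$, which is exactly the claimed right-hand estimate.

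For the lower bound I would start from the right inequality in~\eqref{eq:71},
\[
\|u_{n}\|_{s_{n}}^{2}\ge \lambda_{1,s_n}|\varphi_{s_{n}}|_{2}^{2}\left(\frac{2}{p_{n}}\frac{|\varphi_{s_{n}}|_{p_{n}}^{p_{n}}}{\lambda_{1,s_n}|\varphi_{s_{n}}|_{2}^{2}}\right)^{\frac{2}{2-p_{n}}}\frac{2^{p_n-2}-1}{p_n-2}\,\frac{p_n}{2^{p_n}},
\]
and pass to the limit in each of the four factors on the right: $\lambda_{1,s_n}\to 1$ by~\eqref{asymp}; $|\varphi_{s_n}|_2^2=1$ by the $L^2$-normalization; the bracketed power tends to $\exp(-\frac{2\lambda_1^L}{\mu}-2\int_\Omega\ln(|\varphi_L|)|\varphi_L|^2\,dx+1)$ by Lemma~\ref{limit1}; and the two elementary quotients satisfy $\frac{2^{p_n-2}-1}{p_n-2}\to\ln 2$ (this is the difference quotient at $t=2$ of $t\mapsto 2^{t-2}$) and $\frac{p_n}{2^{p_n}}\to\frac12$. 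Since every factor converges to a finite limit, the limit of the product equals the product of the limits, and using $|\varphi_L|_2^2=1$ one may reinsert this factor to match the statement; this yields precisely the asserted left-hand inequality up to an $o(1)$ term.

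The argument is essentially bookkeeping, and I do not expect a real obstacle. The only points deserving a line of care are: checking that none of the four factors in the lower bound degenerates (so that the limit genuinely distributes over the product, using that $\lambda_{1,s_n}\to1$ and the bracketed power has a finite positive limit from Lemma~\ref{limit1}), and identifying the elementary limit $\lim_{t\to2}\frac{2^{t-2}-1}{t-2}=\ln 2$ (equivalently by L'Hôpital).
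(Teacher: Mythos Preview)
Your proposal is correct and follows essentially the same route as the paper: the upper bound comes from the left inequality in~\eqref{eq:71} together with Lemma~\ref{limit2}, and the lower bound from the right inequality in~\eqref{eq:71} combined with Lemma~\ref{limit1} and the elementary limit $\lambda_{1,s_n}|\varphi_{s_n}|_2^2\,\frac{2^{p_n-2}-1}{p_n-2}\,\frac{p_n}{2^{p_n}}\to\frac{\ln 2}{2}$. The paper's own proof is just this bookkeeping stated in two sentences.
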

\begin{proof}
The upper bound follows from Lemma~\ref{limit2} and \eqref{eq:71}. The lower bound follows from \eqref{eq:71}, Lemma~\ref{limit1}, and the fact that
$\lambda_{1,s_n}|\varphi_{s_{n}}|_{2}^{2}
\frac{2^{p_n-2}-1}{p_n-2} \frac{p_n}{2^{p_n}}
\to \frac{\ln2}{2}|\varphi_L|_{2}^{2}=\frac{\ln2}{2}.
$
\end{proof}
\begin{corollary}
\label{norm2bound}
Let $(u_n)_{n\in\N}$ as in Theorem~\ref{ebounds}, then $|u_n|_{2}^{2}\leq|\Omega|\exp\left(-\frac{2\lambda_{1}^{L}}{\mu}\right)+o(1)$ as $n\rightarrow\infty.$
\end{corollary}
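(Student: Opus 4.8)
The plan is to combine the upper energy bound from Theorem~\ref{ebounds} with the spectral (Poincaré-type) inequality \eqref{eq:poincin} and the known asymptotics of the first fractional eigenvalue. First, since $u_n\in\mathcal H^{s_n}_0(\Omega)$, inequality \eqref{eq:poincin} gives
\begin{align*}
|u_n|_2^2\leq \frac{1}{\lambda_{1,s_n}}\|u_n\|_{s_n}^2.
\end{align*}
Next, by Theorem~\ref{ebounds} we have $\|u_n\|_{s_n}^2\leq |\Omega|\exp(-2\lambda_1^L/\mu)+o(1)$ as $n\to\infty$, so
\begin{align*}
|u_n|_2^2\leq \frac{1}{\lambda_{1,s_n}}\Big(|\Omega|\exp\big(-\tfrac{2\lambda_1^L}{\mu}\big)+o(1)\Big)\qquad\text{as }n\to\infty.
\end{align*}

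Finally, I would invoke the expansion \eqref{asymp}, namely $\lambda_{1,s_n}=1+s_n\lambda_1^L+o(s_n)$ as $n\to\infty$ (from \cite[Theorem 1.5]{CW19} or \cite[Theorem 1.1]{FJW20}), which in particular yields $\lambda_{1,s_n}\to 1$ and hence $\tfrac{1}{\lambda_{1,s_n}}=1+o(1)$. Substituting this into the previous display and using that $|\Omega|\exp(-2\lambda_1^L/\mu)$ is a fixed finite constant, the factor $\tfrac{1}{\lambda_{1,s_n}}$ only perturbs the right-hand side by an $o(1)$ quantity, giving
\begin{align*}
|u_n|_2^2\leq |\Omega|\exp\big(-\tfrac{2\lambda_1^L}{\mu}\big)+o(1)\qquad\text{as }n\to\infty,
\end{align*}
which is the claim. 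There is no real obstacle here: the only point requiring minimal care is bookkeeping of the $o(1)$ terms, ensuring that multiplying the $o(1)$ in the energy bound by the bounded sequence $\tfrac{1}{\lambda_{1,s_n}}$ remains $o(1)$, and that $(\tfrac{1}{\lambda_{1,s_n}}-1)$ times the fixed constant is also $o(1)$; both are immediate once $\lambda_{1,s_n}\to 1$ is in hand.
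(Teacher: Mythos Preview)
Your proposal is correct and follows essentially the same approach as the paper: combine the Poincar\'e-type inequality \eqref{eq:poincin} with the upper bound from Theorem~\ref{ebounds} and the asymptotics $\lambda_{1,s_n}=1+s_n\lambda_1^L+o(s_n)$ from \cite[Theorem 1.1]{FJW20}. The paper's proof is in fact a one-line sketch of exactly this argument; your version spells out the $o(1)$ bookkeeping more carefully.
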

\begin{proof}
The result follows from \eqref{eq:poincin} and Theorem~\ref{ebounds}, because $\lambda_{1,s}=1+s_{n}\lambda_{1}^{L}+o(s_{n})$ as $n\rightarrow\infty$ (see \cite[Theorem 1.1]{FJW20}).
\end{proof}

\section{Sublinear power nonlinearity}\label{sec:pnl}

\subsection{Asymptotically linear case}

We characterize first the limiting profile of solutions $u_n$ of \eqref{eq:peq} when $\lim_{n\to\infty}p_n=2$, which we call the asymptotically linear case (because $|t|^{p_n-2}t\to t$ as $n\to\infty$).  We begin our analysis with a study of the least energy solutions of \eqref{s}.

\subsubsection{A logarithmic sublinear problem}

Recall that
\begin{align*}
J_{0}:\mathbb{H}(\Omega)\rightarrow\mathbb{R},\quad J_{0}(u):=\frac{1}{2}\mathcal{E}_{L}(u,u)+I(u),\quad I(u):=\frac{\mu}{4}\int_{\Omega}u^{2}\left(\ln(u^{2})-1\right)\, dx,
\end{align*}
 where $\mu>0.$  This functional is of class $\mathcal{C}^{1}$, see \cite[Lemma 3.1]{HSS22}.  We show first that $J_0$ is coercive. 
 
 \begin{lemma}\label{lem:c}
 $\lim\limits_
 {\substack{\|u\|\to \infty \\ u\in \mathbb H(\Omega)}}
 J_0(u)=\infty$.
 \end{lemma}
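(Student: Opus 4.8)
The plan is to show that $J_0(u)\to\infty$ along any sequence $(u_k)\subset\mathbb H(\Omega)$ with $\|u_k\|\to\infty$. The only potentially dangerous term is $I(u)=\frac{\mu}{4}\int_\Omega u^2(\ln(u^2)-1)\,dx$, which can be negative; everything hinges on controlling this negative part by the quadratic energy $\frac12\mathcal E_L(u,u)$, which itself differs from $\frac12\|u\|^2$ only by a lower-order $L^2$ term (see \eqref{homega}, or Lemma~\ref{smallorder}). First I would record the elementary pointwise bound: since $t\mapsto t^2(\ln(t^2)-1)$ is bounded below and, more importantly, for every $\eps>0$ there is $C_\eps>0$ with $t^2(1-\ln(t^2))\le \eps t^4 + C_\eps$ for all $t\in\R$ (the function $t^2|\ln(t^2)-1|$ grows slower than any positive power $t^{2+\delta}$ at infinity, and is bounded near $t=0$). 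Hence $-I(u)\le \frac{\mu}{4}\big(\eps|u|_4^4 + C_\eps|\Omega|\big)$, so it suffices to dominate $|u|_4^4$ by the energy.

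The mechanism for that domination is the continuous embedding $\mathbb H(\Omega)\hookrightarrow L^{q}(\Omega)$ for a range of $q>2$ (this is part of the functional-analytic setup behind $\mathbb H(\Omega)$ in \cite{CW19}; the space has a logarithmic Sobolev-type embedding, in particular it embeds into $L^{q}$ for $q$ slightly above $2$, say $q\in[2, \tfrac{2N}{N-\eta}]$ for small $\eta$, and into every $L^q$, $q<\infty$, when $N=1$). Actually, to avoid invoking the precise embedding exponent, I would instead use interpolation: for a fixed $q>2$ in the embedding range, write $|u|_4^4 \le |u|_2^{4\theta}|u|_q^{4(1-\theta)}$ by Hölder with the appropriate $\theta\in(0,1)$, or — cleaner — simply absorb using the logarithmic nonlinearity's sublinear growth directly. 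The cleanest route: bound $\int_\Omega u^2|\ln(u^2)|\,dx$. Split into $\{|u|\le M\}$, where the integrand is bounded by $C(M)|\Omega|$, and $\{|u|>M\}$, where $u^2|\ln(u^2)| \le u^2\cdot u^{\delta}\le |u|_{2+\delta}^{2+\delta}$ for a small $\delta>0$ chosen so that $2+\delta$ lies in the embedding range of $\mathbb H(\Omega)$; on that set this is $\le \eta\|u\|^{2+\delta}$ after embedding. Combined with \eqref{homega} giving $\mathcal E_L(u,u)\ge \frac12\|u\|^2 - (|\rho_N|+\|h_\Omega\|_\infty)|u|_2^2 \ge \frac14\|u\|^2 - C$ (again using the embedding to absorb $|u|_2^2\le C\|u\|^2$ with a small constant, or simply $|u|_2^2 \le C_2^{-1}$-times... — more carefully, absorb via Young's inequality since the bad term is $o(\|u\|^2)$), we obtain
\begin{align*}
J_0(u)\ge \tfrac18\|u\|^2 - \eta\|u\|^{2+\delta}\cdot(\text{small}) - C',
\end{align*}
and since the dominant term is $\|u\|^2$... wait, $2+\delta>2$, so this does not immediately work — I must instead keep $\delta<0$ in effect, i.e. bound $u^2|\ln u^2|\le C_\eps + \eps u^2\cdot(\text{anything that embeds})$ but with the embedding term carrying a coefficient we can take $\to 0$. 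The correct statement: for every $\eps>0$, $t^2|\ln(t^2)|\le \eps\,t^{q} + C_\eps$ for all $t$, where $q>2$ is in the embedding range; then $\int_\Omega u^2|\ln u^2|\le \eps|u|_q^q + C_\eps|\Omega|\le \eps C_{\mathrm{emb}}^q\|u\|^q + C_\eps|\Omega|$ — still has $\|u\|^q$ with $q>2$. This is genuinely the main obstacle and shows coercivity is \emph{not} merely "quadratic beats sublinear"; rather, the log is sublinear relative to \emph{any} power, so for the quadratic energy to win I should bound $t^2|\ln t^2| \le \eps t^2 + C_\eps$ valid only for $|t|\ge$ some threshold, handle small $|t|$ separately, and crucially apply this with $\eps$ small: $\int_\Omega u^2|\ln u^2| \le \eps |u|_2^2 + C_\eps|\Omega|$, then use $|u|_2^2\le C\|u\|^2$? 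No — $|u|_2$ need not be small. The resolution is that $|t|\mapsto |\ln t^2|$ is \emph{unbounded} only logarithmically, so $t^2|\ln t^2|\le t^2\log(1+t^2)+C$, and one genuinely needs $|u|_2^2\log(\text{something})$ controlled — this is exactly where a logarithmic Sobolev inequality would enter, but the paper explicitly says it avoids those. So the honest plan is: use $\mathbb H(\Omega)\hookrightarrow L^q(\Omega)$ for some fixed $q>2$, write $\int u^2|\ln u^2|\le \|u\|_q^2 \cdot \big(\int |\ln u^2|^{q/(q-2)}\big)^{(q-2)/q}$ by Hölder — but the second factor is not controlled either.

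\textbf{The main obstacle} is therefore precisely reconciling the sublinear-but-power-dominating growth of $t^2\ln(t^2)$ with the merely quadratic coercivity of $\mathcal E_L$. I expect the intended resolution is the interpolation/Young trick: fix $q>2$ in the embedding range; the elementary inequality $t^2|\ln(t^2)|\le C_\eps + \eps t^q$ holds for \emph{all} $t$ and \emph{all} $\eps>0$ (with $C_\eps\to\infty$), giving $|I(u)|\le \tfrac{\mu}{4}(C_\eps|\Omega| + \eps C_{\mathrm{emb}}^q\|u\|^q)$; then since $\mathcal E_L(u,u)\ge \tfrac12\|u\|^2 - C\|u\|^2\cdot 0$... — no. I now think the actual argument must exploit that on $\{|u|>1\}$ one has $\ln(u^2)>0$, so $I(u)\ge -\tfrac{\mu}{4}|u|_2^2 \ge -\tfrac{\mu}{4}|\Omega|\,|u|_\infty^2$ is useless, but $I(u)\ge \tfrac{\mu}{4}\int_{\{|u|>e\}}u^2(\ln u^2 - 1)\,dx - \tfrac{\mu}{4}e^2|\Omega| \ge -\tfrac{\mu}{4}e^2|\Omega| =: -C_0$, because $t^2(\ln t^2 - 1)\ge -C_0'$ pointwise with $C_0' = \sup_{|t|\le e} t^2(1-\ln t^2)<\infty$ attained at... $t=1$: value $1$. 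So in fact $I(u)\ge -\tfrac{\mu}{4}|\Omega|$ \emph{unconditionally}, since $t^2(\ln(t^2)-1)\ge -1$ for every real $t$! That kills the whole difficulty. Thus the proof is: (1) note $r(\ln r - 1)\ge -1$ for $r\ge 0$ (calculus: minimum at $r=1$), whence with $r=u^2$, $I(u)\ge -\tfrac{\mu}{4}|\Omega|$; (2) by \eqref{homega}, $\mathcal E_L(u,u)\ge \|u\|^2 + \int_\Omega(h_\Omega+\rho_N)u^2\ge \|u\|^2 - (\|h_\Omega\|_{L^\infty(\Omega)}+|\rho_N|)\,|u|_2^2$; (3) absorb the $|u|_2^2$ term using the embedding $\mathbb H(\Omega)\hookrightarrow L^2$ — but that only gives $|u|_2^2\le C\|u\|^2$ with $C$ not small, so instead invoke Lemma~\ref{smallorder}-type control or, better, note $\mathbb H(\Omega)\hookrightarrow L^2(\Omega)$ is \emph{compact}, hence for any $\eps>0$, $|u|_2^2\le \eps\|u\|^2 + C_\eps$... that is false for non-compact-looking reasons but \emph{is} the standard consequence of compact embedding plus the fact that $\|\cdot\|$ dominates a norm — actually for a compact embedding $X\hookrightarrow Y$ one does NOT get $\|u\|_Y\le\eps\|u\|_X+C_\eps$ in general. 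The safe statement is just $|u|_2^2\le C_\Omega\|u\|^2$ (Poincaré-type, which holds since $h_\Omega+\rho_N$ bounded below and $\mathcal E_L$ is coercive on $\mathbb H(\Omega)$ — or directly from \cite{CW19}); but if $C_\Omega(\|h_\Omega\|_\infty+|\rho_N|)\ge 1$ this loses. The clean fix: $\mathcal E_L(u,u)=\int \ln(|\xi|^2)|\hat u|^2$, split $|\xi|\gtrless \delta$; or just cite that $\mathcal E_L$ is bounded below and, for $\|u\|$ large, $\mathcal E_L(u,u)\ge c\|u\|^2 - C$ — this \emph{is} in \cite{CW19} (the form $\mathcal E_L$ generates an equivalent-up-to-lower-order norm). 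Granting $\mathcal E_L(u,u)\ge c\|u\|^2 - C$ for some $c>0$, we conclude $J_0(u)\ge \tfrac c2\|u\|^2 - C - \tfrac\mu4|\Omega|\to\infty$ as $\|u\|\to\infty$. So the real content is the pointwise inequality $r(\ln r-1)\ge -1$ together with lower-boundedness of the log-energy; I would present it in that order and flag step (3)/the coercivity of $\mathcal E_L$ modulo lower order as the one place a cited property of $\mathbb H(\Omega)$ is needed.
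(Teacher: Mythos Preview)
Your final plan has a genuine gap at step (3). You bound $I(u)\ge -\tfrac{\mu}{4}|\Omega|$ via the pointwise inequality $r(\ln r-1)\ge -1$ (correct), and then hope for $\mathcal E_L(u,u)\ge c\|u\|^2-C$ from \cite{CW19}. That coercivity is \emph{false} in general: $\mathcal E_L$ is indefinite, and for large domains the first Dirichlet eigenvalue $\lambda_1^L$ of $L_\Delta$ is negative (from $\lambda_{1,s}=1+s\lambda_1^L+o(s)$ and the scaling $\lambda_{1,s}(B_R)=R^{-2s}\lambda_{1,s}(B_1)$ one gets $\lambda_1^L(B_R)=\lambda_1^L(B_1)-2\ln R\to-\infty$). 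Along the ray $u=t\varphi_L$ one then has $\mathcal E_L(u,u)=t^2\lambda_1^L\to-\infty$ while $\|u\|\to\infty$. So you cannot decouple $I$ from the $L^2$-defect of $\mathcal E_L$; throwing away the positive part of $I$ is exactly what breaks the argument. Neither the compact embedding trick $|u|_2^2\le\eps\|u\|^2+C_\eps$ (false, as you noted) nor a bare Poincar\'e $|u|_2^2\le C_P\|u\|^2$ (true, but with $C_P$ possibly too large) rescues this.

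The paper's proof uses precisely the piece you discarded. From $\mathcal E_L(u,u)\ge\|u\|^2-C|u|_2^2$ one gets
\[
J_0(u)\ge\tfrac12\|u\|^2-\tfrac12\big(C+\tfrac{\mu}{2}\big)|u|_2^2+\tfrac{\mu}{4}\int_\Omega u^2\ln(u^2)\,dx,
\]
and the last integral is then used to \emph{absorb} the bad $|u|_2^2$ term: on the level set $\widetilde\Omega:=\{\ln(u^2)>\tfrac{2C}{\mu}+1\}$ one has $\tfrac{\mu}{4}u^2\ln(u^2)\ge\tfrac12(C+\tfrac{\mu}{2})u^2$ pointwise, while on $\Omega\setminus\widetilde\Omega$ the function $u^2$ is uniformly bounded, so the two remaining integrals over that set are bounded by a constant depending only on $\Omega,\mu$. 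This yields $J_0(u)\ge\tfrac12\|u\|^2-C_1$ and hence coercivity. The key idea you were circling but never landed on is that the super-quadratic growth of $t^2\ln(t^2)$ at infinity (not its lower bound) is what compensates for the lack of coercivity of $\mathcal E_L$.
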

\begin{proof}
Let $u\in \mathbb{H}(\Omega)$. By \eqref{eq:bilog}, there is $C=C(\Omega)>0$ such that $\mathcal{E}_{L}(u,u)\geq\|u\|^{2}-C|u|_{2}^{2}.$  Moreover,
\begin{align}
J_{0}(u)\geq\frac{1}{2}\|u\|^{2}-\frac{1}{2}\left(C+\frac{\mu}{2}\right)|u|_{2}^{2}+\frac{\mu}{4}\int_{\Omega}u^{2}\ln(u^{2})\, dx.\label{eq:730}
\end{align}
Let $\widetilde \Omega:=\left\lbrace x\in\Omega: \ln(u^{2}(x))>\frac{2C}{\mu}+1\right\rbrace$. Then, $\frac{\mu}{4}\int_{\widetilde \Omega} u^{2}\ln(u^{2})\, dx \geq  \frac{1}{2}\left(C+\frac{\mu}{2}\right)\int_{\widetilde \Omega}u^{2}\, dx.$ Therefore,
\begin{align*}
J_{0}(u)\geq\frac{1}{2}\|u\|^{2}-\frac{1}{2}\left(C+\frac{\mu}{2}\right)\int_{\Omega\setminus\widetilde \Omega}u^{2}\, dx+\frac{\mu}{4}\int_{\Omega\setminus\widetilde \Omega}u^{2}\ln(u^{2})\, dx.
\end{align*} 
Since $u^{2}\leq e^{\frac{2C}{\mu}+1}$ in $\Omega\setminus\widetilde \Omega$, there is $C_1=C_1(\Omega,\mu)>0$ such that
\begin{align*}
-\frac{1}{2}\left(C+\frac{\mu}{2}\right)\int_{\Omega\setminus\widetilde \Omega}u^{2}\, dx+\frac{\mu}{4}\int_{\Omega\setminus\widetilde \Omega}u^{2}\ln(u^{2})\, dx>-C_1
\end{align*}
and then $J_{0}(u)\geq\frac{1}{2}\|u\|^{2}-C_{1}$, which yields the result.
\end{proof}

\begin{theorem}
\label{exisleast}
For every $\mu>0$ there is a nontrivial unique (up to a sign) least energy solution of 
\begin{align}\label{mu:p}
L_{\Delta}v_{0}=-\mu\ln(|v_{0}|)v_{0}\quad\mbox{in}~\Omega,\quad u_{0}\in \mathbb{H}(\Omega).
\end{align}
Moreover, $v_{0}$ does not change sign.
\end{theorem}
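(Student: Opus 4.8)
The plan is to establish existence via the direct method and uniqueness via a convexity argument in a suitable variable.

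For \textbf{existence of a least-energy solution}, I would work with the $C^1$ functional $J_0$ on the Hilbert space $\mathbb{H}(\Omega)$. First I would check that $J_0$ is bounded below: by Lemma~\ref{lem:c}, $J_0$ is coercive, and since $\mathbb{H}(\Omega)\hookrightarrow L^2(\Omega)$ compactly, a minimizing sequence $(u_k)$ is bounded in $\mathbb{H}(\Omega)$, hence (up to a subsequence) converges weakly in $\mathbb{H}(\Omega)$ and strongly in $L^2(\Omega)$ to some $v_0$. The quadratic form $\mathcal{E}_L$ is weakly lower semicontinuous (it differs from $\|\cdot\|^2$ by terms continuous in the $L^2$-topology, cf.\ \eqref{eq:bilog}), and the nonlinear term $I(u)=\frac{\mu}{4}\int_\Omega u^2(\ln(u^2)-1)\,dx$ passes to the limit along the strongly $L^2$-convergent subsequence — here one uses that $t\mapsto t^2(\ln(t^2)-1)$ is continuous with $t^2\ln(t^2)$ controlled below by $-Ct$ and above (on bounded sets, after noting solutions/minimizers are bounded, or by a generalized dominated convergence / Fatou argument using the lower bound $t^2\ln t^2 \ge -e^{-1}$) so that $I$ is continuous on $L^2$-convergent bounded sequences. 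Thus $J_0(v_0)\le \liminf J_0(u_k)=\inf_{\mathbb{H}(\Omega)} J_0$, so $v_0$ is a global minimizer, hence a weak solution of \eqref{mu:p}. Nontriviality follows because $J_0(0)=0$ but $J_0(t\varphi_L)<0$ for small $t>0$: indeed $J_0(t\varphi_L)=\frac{t^2}{2}\lambda_1^L + \frac{\mu}{4}\int_\Omega t^2\varphi_L^2(\ln(t^2\varphi_L^2)-1)\,dx = \frac{t^2}{2}\lambda_1^L + \frac{\mu t^2}{4}\ln(t^2)|\varphi_L|_2^2 + O(t^2)$, and the $\ln(t^2)\to-\infty$ term dominates, forcing $J_0(t\varphi_L)<0$; hence $\inf J_0 < 0 = J_0(0)$.

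For \textbf{sign and uniqueness}, I would first observe $J_0(|u|)\le J_0(u)$ since $\mathcal{E}_L(|u|,|u|)\le \mathcal{E}_L(u,u)$ (the quadratic-form analogue of the Kato inequality, visible from \eqref{homega}: $||u(x)|-|u(y)||\le |u(x)-u(y)|$ and the zeroth-order term depends only on $u^2$) and $I(|u|)=I(u)$; so we may take the minimizer $v_0\ge 0$, and then the strong maximum principle for $L_\Delta$ (or, since $\ln$ is only locally bounded, a Harnack-type argument; alternatively this sign-definiteness on $\Omega$ is sharpened later under the exterior sphere condition) gives that any nonnegative least-energy solution is either $\equiv 0$ or $>0$ a.e.; nontriviality rules out the former. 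For uniqueness, the key trick is the change of variables $w = u^2$: writing the energy in terms of $w$, the term $I$ becomes $\frac{\mu}{4}\int_\Omega w(\ln w - 1)\,dx$, which is \emph{convex} in $w$, while $\mathcal{E}_L(\sqrt{w},\sqrt{w})$ should be shown to be convex in $w$ along the segment between two positive solutions — this is the hidden-convexity / Benguria–Brezis–Lieb type argument, using that $(a,b)\mapsto \big(\sqrt{ta+(1-t)b}\big)$ interacts with the Dirichlet form so that $w\mapsto \mathcal{E}_L(\sqrt w,\sqrt w)$ is convex (for the local Dirichlet form this is classical; for $\mathcal{E}_L$ one reduces to the pointwise inequality $\big(\sqrt{ta_1+(1-t)a_2}-\sqrt{tb_1+(1-t)b_2}\big)^2 \le t(\sqrt{a_1}-\sqrt{b_1})^2 + (1-t)(\sqrt{a_2}-\sqrt{b_2})^2$, which holds by concavity of $\sqrt{\cdot}$ and convexity of $(x,y)\mapsto(x-y)^2$, plus the zeroth-order term being linear in $w$). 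Strict convexity of $w\mapsto w\ln w$ then forces any two nonnegative least-energy solutions to coincide, and together with the sign analysis this gives uniqueness up to a sign. \textbf{The main obstacle} I anticipate is making the hidden-convexity argument rigorous at the level of the nonlocal form $\mathcal{E}_L$ on the full space $\mathbb{H}(\Omega)$ — in particular justifying that $\sqrt{ta_1+(1-t)a_2}\in\mathbb{H}(\Omega)$ and that the convexity inequality survives integration against the singular kernel $|x-y|^{-N}$ and the (sign-indefinite) zeroth-order weight $h_\Omega+\rho_N$; the sign term being merely $L^2$-continuous rather than continuous in $\mathbb{H}(\Omega)$ also requires care in the lower-semicontinuity step, which is where one leans on the compact embedding and on the a priori $L^\infty$-bound \eqref{C0clthm} (proved separately) to control the logarithm.
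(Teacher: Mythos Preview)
Your proposal is correct and matches the paper's proof almost exactly: existence via coercivity (Lemma~\ref{lem:c}) plus weak lower semicontinuity (using Fatou for the $\int u^2\ln u^2$ term, which is bounded below by $-e^{-1}$ --- no $L^\infty$ bound is needed here), nontriviality via $J_0(t\varphi)<0$ for small $t$, sign via $\mathcal{E}_L(|u|,|u|)\le\mathcal{E}_L(u,u)$, and uniqueness via the hidden-convexity path $\gamma(t)=((1-t)u^2+tv^2)^{1/2}$. The ``main obstacle'' you flag is resolved in the paper precisely as you anticipate: the pointwise Minkowski-type inequality together with the representation \eqref{homega} (where the zeroth-order weight $h_\Omega+\rho_N$ enters linearly in $u^2$) gives convexity of $t\mapsto\mathcal{E}_L(\gamma(t),\gamma(t))$, and strict convexity of $w\mapsto w\ln w$ finishes the argument.
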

\begin{proof}
By Lemma~\ref{lem:c}, there is a minimizing sequence $(v_{k})_{k\in\N}$ for $J_{0}$, that is, 
$\lim_{k\rightarrow\infty}J_{0}(v_{k})=\inf_{w\in\mathbb{H}(\Omega)}J_{0}(w)=:m.$ By the compact embedding of $\mathbb{H}(\Omega)$ into $L^{2}(\Omega)$, there is $v_{0}\in \mathbb{H}(\Omega)$ such that, up to a subsequence,
\begin{align*}
    v_{{k}}\rightharpoonup v_{0}\quad\text{ in $\mathbb{H}(\Omega)$},
    \qquad 
    v_{{k}}\rightarrow v_{0}\quad \text{ in $L^{2}(\Omega)$},\qquad 
	v_{{k}}\rightarrow v_{0}\quad \text{ a.e. in $\Omega$,}
\end{align*}
as $k\to\infty.$ In particular, $\|v_{0}\|^{2}\leq\liminf\limits_{k\rightarrow\infty}\|v_{k}\|^{2}$.  Moreover, since the function $t \mapsto t^2 \ln t^2$ is bounded below by a constant which is integrable over the bounded set $\Omega$, it follows by Fatou's Lemma that
\begin{align}
\int_{\Omega}v_{0}^{2}\ln(v_{0}^{2})\, dx\leq\liminf\limits_{k\to\infty}\int_{\Omega}v_{k}^{2}\ln(v_{k}^{2})\, dx.\label{eq:734}
\end{align}
Observe that 
\begin{align*}
&\left\lvert \int_{\substack{x,y\in\mathbb{R}^{N}\\ |x-y|\geq 1}}\frac{v_{k}(x)v_{k}(y)}{|x-y|^{N}}\, dx\, dy-\int_{\substack{x,y\in\mathbb{R}^{N}\\ |x-y|\geq 1}}\frac{v_{0}(x)v_{0}(y)}{|x-y|^{N}}\, dx\, dy\right\rvert\\
&\leq\int_{\substack{x,y\in\mathbb{R}^{N}\\ |x-y|\geq 1}}\frac{|v_{k}(x)||v_{k}(y)-v_{0}(y)|}{|x-y|^{N}}\, dx\, dy+\int_{\substack{x,y\in\mathbb{R}^{N}\\ |x-y|\geq 1}}\frac{|v_{0}(y)||v_{k}(x)-v_{0}(x)|}{|x-y|^{N}}\, dx\, dy=:\mathcal{I}_{1}+\mathcal{I}_{2},
\end{align*}
where 
\begin{align*}
\mathcal{I}_{1}&\leq\int_{\mathbb{R}^{N}}|v_{k}(x)|\int_{\mathbb{R}^{N}}|v_{k}(x+y)-v_{0}(x+y)|dydx=\int_{\Omega}|v_{k}(x)|\, dx\int_{\Omega}|v_{k}(y)-v_{0}(y)|dy\rightarrow 0,
\end{align*}
and a similar argument shows that $\mathcal{I}_{2}\to 0$ as $k\to\infty$. Hence, 
\begin{align}
\lim_{k\rightarrow\infty}\int_{\substack{x,y\in\mathbb{R}^{N}\\ |x-y|\geq 1}}\frac{v_{k}(x)v_{k}(y)}{|x-y|^{N}}\, dx\, dy=\int_{\substack{x,y\in\mathbb{R}^{N}\\ |x-y|\geq 1}}\frac{v_{0}(x)v_{0}(y)}{|x-y|^{N}}\, dx\, dy.\label{eq:735}
\end{align}
As a consequence, $J_{0}(v_{0})\leq\liminf\limits_{k\rightarrow\infty}J_{0}(v_{k})=m$ and $v_{0}$ is a least energy solution of $\eqref{eq:727}$. 

To see that $v_0$ is nontrivial, let $\varphi\in C^\infty_c(\Omega)\backslash \{0\}$ and observe that 
\begin{align}\label{mene}
J_0(v_0)=m\leq J_0(t\varphi)=\frac{t^2}{2}\left(\mathcal{E}_L(\varphi,\varphi) + \frac{\mu}{2}\int_\Omega \varphi^2(\ln(t^2)+\ln(\varphi^2)-1)\, dx\right)<0
\end{align}
for $t>0$ sufficiently small, because $\lim\limits_{t\to 0}\ln(t^2)=-\infty$. Therefore $v_0\not\equiv 0$.

By \cite[Lemma 3.3]{CW19}, $\mathcal{E}_{L}(|v_{0}|,|v_{0}|)\leq\mathcal{E}_{L}(v_{0},v_{0})$.  This implies that $\mathcal{E}_{L}(|v_{0}|,|v_{0}|)=\mathcal{E}_{L}(v_{0},v_{0})$, which, by \cite[Lemma 3.3]{CW19}, implies that $v_0$ does not change sign.

Finally, we show the uniqueness (up to a sign) of the least energy solution using a convexity-by-paths argument as in \cite[Section 6]{BFMST18}. Assume, by contradiction, that there are two least-energy solutions $u$ and $v$ such that $u^2\neq v^2$.  Recall that a least-energy solution is a global minimizer of the energy. Let
\begin{align*}
\gamma(t,u,v):=((1-t)u^2+ t v^2)^\frac{1}{2}\qquad \text{ for }t\in[0,1].
\end{align*}
We claim that
\begin{align}\label{claim}
\text{the function $g:[0,1]\to\R$ given by $g(t):=J_0(\gamma(t,u,v))$ is strictly convex in $[0,1]$.}
\end{align}
This would yield a contradiction, since the function $g$ cannot have two global minimizers (at $t=0$ and at $t=1$) and be strictly convex in $[0,1]$.  To see \eqref{claim}, we argue as in \cite[Theorem 6.1]{BFMST18}.

Note that $g(t)=g_1(t)+g_2(t)$, where
\begin{align*}
g_1(t)&:=\mathcal {E}_L(\gamma(t,u,v),\gamma(t,u,v)),\\
g_2(t)&:=\frac{\mu}{2}\int_\Omega [\gamma(t,u,v)(x)]^2(\ln[\gamma(t,u(x),v(x))^2]-1)\, dx.
\end{align*}

First, we show the convexity of $g_1$ in $[0,1]$.  Let $t_1,t_2,\theta\in[0,1]$. We claim that
\begin{align}\label{g1}
g_1((1-\theta)t_1+\theta t_2)\leq (1-\theta)g_1(t_1)+ \theta g_1(t_2).
\end{align}
Indeed, set $U_1:=\gamma(t_1,u,v)
$  and  $U_2:=\gamma(t_2,u,v).$
A direct calculation shows that
\begin{align*}
\gamma((1-\theta)t_1+\theta t_2,u,v)=\gamma(\theta,U_1,U_2).
\end{align*}
Now, for $x,y\in\Omega$, let
\begin{align*}
a=\sqrt{1-\theta}U_1(x),\quad b=\sqrt{1-\theta}U_1(y),\quad c=\sqrt{\theta}U_2(x),\quad d=\sqrt{\theta}U_2(y).
\end{align*}
Then, by the Minkowski inequality, $|(a^2+c^2)^\frac{1}{2}-(b^2+d^2)^\frac{1}{2}|\leq ((a-b)^2+(c-d)^2)^\frac{1}{2},$ which is equivalent to
\begin{align}\label{claim2}
     (\gamma(\theta,U_1,U_2)(x)-\gamma(\theta,U_1,U_2)(y))^2\leq (1-\theta)(U_1(x)-U_1(y))^2+\theta(U_2(x)-U_2(y))^2.
 \end{align}
But then, using \eqref{homega},
\begin{align}
g_1&((1-\theta)t_1+\theta t_2)\notag\\
& =\mathcal {E}_L(\gamma((1-\theta)t_1+\theta t_2,u,v),\gamma((1-\theta)t_1+\theta t_2,u,v))
=\mathcal {E}_L(\gamma(\theta,U_1,U_2),\gamma(\theta,U_1,U_2))\notag\\
&=\frac{c_{N}}{2}\int_\Omega\int_\Omega \frac{(\gamma(\theta,U_1,U_2)(x)-\gamma(\theta,U_1,U_2)(y))^2}{|x-y|^{N}}\, dx\, dy
+\int_\Omega (h_\Omega(x)+\rho_N)\gamma(\theta,U_1,U_2)(x)^2\, dx.\label{g1:1}
\end{align}

By \eqref{claim2},
\begin{align}
 &\int_\Omega\int_\Omega \frac{(\gamma(\theta,U_1,U_2)(x)-\gamma(\theta,U_1,U_2)(y))^2}{|x-y|^{N}}\, dx\, dy\notag\\
 &\leq (1-\theta)\int_\Omega\int_\Omega \frac{(U_1(x)-U_1(y))^2}{|x-y|^{N}}\, dx\, dy+\theta\int_\Omega\int_\Omega \frac{(U_2(x)-U_2(y))^2}{|x-y|^{N}}\, dx\, dy
 \label{g1:2}
\end{align}
and
\begin{align}
\int_\Omega (h_\Omega+\rho_N)\gamma(\theta,U_1,U_2)^2\, dx
=(1-\theta)\int_\Omega (h_\Omega+\rho_N)U_1^2\, dx+\theta\int_\Omega (h_\Omega+\rho_N)U_2^2\, dx.\label{ho0}
 \end{align}
 By \eqref{g1:1}, \eqref{g1:2}, and \eqref{ho0},
 \begin{align*}
  g_1((1-\theta)t_1+\theta t_2)\leq (1-\theta)\mathcal {E}_L(U_1,U_1)+\theta\mathcal {E}_L(U_2,U_2)=(1-\theta)g_1(t_1)+ \theta g_1(t_2),
 \end{align*}
which yields \eqref{g1}.

On the other hand, for $x\in \Omega$, let
\begin{align*}
f(t)&:=[\gamma(t,u,v)(x)]^2(\ln([\gamma(t,u,v)(x)]^2)-1)\\
&=[(1-t)u(x)^2+tv(x)^2](\ln[(1-t)u(x)^2+tv(x)^2]-1).
\end{align*}
Then $f''(t)=\frac{\left(u(x)^2-v(x)^2\right)^2}{(1-t)u(x)^2+t v(x)^2}>0$ in $(0,1),$ whenever $u(x)$ or $v(x)$ are different from zero. Since $u\not\equiv 0$ (see \eqref{mene}), we have that
\begin{align}\label{g2c}
\text{$t\mapsto g_2(t)= \frac{\mu}{2}\int_\Omega \gamma(t,u,v)^2(\ln(\gamma(t,u(x),v(x))^2)-1)\, dx$ is \emph{strictly} convex in $[0,1].$}
\end{align}
By \eqref{g1} and \eqref{g2c}, we conclude that \eqref{claim} must hold, which yields the desired contradiction.
\end{proof}

\subsubsection{Convergence of solutions}
\begin{theorem}
\label{boundH}
Let $(s_k)_{k\in\N}$, $(p_k)_{k\in\N}$, $\mu$, and $(u_k)_{k\in\N}$ as in Theorem \ref{case2}. There is a constant $C=C(\Omega,\mu)>0$ such that $\|u_k\|^2=\mathcal{E}(u_k,u_k)\leq C+o(1)$ as $k\to\infty.$
\end{theorem}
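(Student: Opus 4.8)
The plan is to test the equation against $u_k$, trade the $\mathbb H(\Omega)$-norm for the logarithmic form $\mathcal E_L$ via \eqref{eq:bilog}, bound $\mathcal E_L(u_k,u_k)$ from above by $s_k^{-1}(\|u_k\|_{s_k}^2-|u_k|_2^2)$ using the elementary first-order relation between $(-\Delta)^{s_k}$ and $L_\Delta$, and finally control $s_k^{-1}(\|u_k\|_{s_k}^2-|u_k|_2^2)=s_k^{-1}\int_\Omega(u_k^{p_k}-u_k^2)\,dx$ with the uniform $L^\infty$-bound from Proposition~\ref{prop1}. It is worth noting that the crude estimate $\|u_k\|^2\le C_2|u_k|_2^2+s_k^{-1}\|u_k\|_{s_k}^2$ available directly from Lemma~\ref{smallorder} is useless here, since $\|u_k\|_{s_k}^2=\int_\Omega u_k^{p_k}\,dx$ stays bounded away from $0$ while $s_k\to 0$; the whole point is to keep intact the cancellation between $\|u_k\|_{s_k}^2$ and $|u_k|_2^2$.

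First, since $u_k$ is a weak solution of \eqref{eq:peq}, testing against $u_k$ gives $\|u_k\|_{s_k}^2=|u_k|_{p_k}^{p_k}=\int_\Omega u_k^{p_k}\,dx$. By Lemma~\ref{smallorder}, $u_k\in\mathbb H(\Omega)$, and by \eqref{eq:bilog},
\[
\|u_k\|^2=\mathcal E_L(u_k,u_k)+c_N\iint_{|x-y|\ge 1}\frac{u_k(x)u_k(y)}{|x-y|^N}\,dx\,dy-\rho_N|u_k|_2^2 .
\]
Since $u_k\ge 0$ is supported in the bounded set $\Omega$, the double integral is at most $|u_k|_1^2\le|\Omega|\,|u_k|_2^2$, so that $\|u_k\|^2\le\mathcal E_L(u_k,u_k)+(c_N|\Omega|+|\rho_N|)\,|u_k|_2^2$, and by Corollary~\ref{norm2bound} the last term is bounded as $k\to\infty$. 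Hence it remains to bound $\mathcal E_L(u_k,u_k)$ from above.

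For this I would use the elementary inequality $\ln r\le(r^{s}-1)/s$, valid for all $r>0$ and $s>0$ (it is just $e^x\ge 1+x$ with $x=s\ln r$). Taking $r=|\xi|^2$ and $s=s_k$ and using the Fourier--Plancherel identities $\mathcal E_L(\varphi,\varphi)=\int_{\mathbb R^N}\ln(|\xi|^2)|\widehat\varphi|^2\,d\xi$ (see \eqref{eq:bilogf}) and $\|\varphi\|_{s_k}^2=\int_{\mathbb R^N}|\xi|^{2s_k}|\widehat\varphi|^2\,d\xi$ for $\varphi\in C_c^\infty(\Omega)$ gives $\mathcal E_L(\varphi,\varphi)\le s_k^{-1}(\|\varphi\|_{s_k}^2-|\varphi|_2^2)$. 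Approximating $u_k$ by functions of $C_c^\infty(\Omega)$ in $\mathcal H_0^{s_k}(\Omega)$ (possible for $s_k<\tfrac12$), which by Lemma~\ref{smallorder} also converge in $\mathbb H(\Omega)$, so that $\mathcal E_L$, $\|\cdot\|_{s_k}^2$ and $|\cdot|_2^2$ are all continuous along the approximating sequence, this inequality passes to the limit:
\[
\mathcal E_L(u_k,u_k)\le\frac{\|u_k\|_{s_k}^2-|u_k|_2^2}{s_k}=\frac{1}{s_k}\int_\Omega\bigl(u_k^{p_k}-u_k^{2}\bigr)\,dx .
\]

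Finally I would estimate this last quantity. By Proposition~\ref{prop1} (applied with $\lim_{n}s_n/(2-p_n)=1/\mu\in[0,\infty)$) there is $M=M(\Omega,\mu,N)>0$ with $|u_k|_\infty\le M$ for all large $k$. Writing $t^{p_k}-t^2=-(2-p_k)\int_0^1 t^{\,2+(p_k-2)\tau}\ln t\,d\tau$ and using that $\sup_{t\in[0,M],\,\tau\in[0,1]}t^{\,2+(p_k-2)\tau}|\ln t|\le C(M)$ once $p_k>\tfrac32$, one gets the pointwise bound $|u_k^{p_k}-u_k^{2}|\le(2-p_k)\,C(M)$, whence
\[
\Bigl|\frac{1}{s_k}\int_\Omega\bigl(u_k^{p_k}-u_k^{2}\bigr)\,dx\Bigr|\le\frac{2-p_k}{s_k}\,C(M)\,|\Omega|\ \longrightarrow\ \mu\,C(M)\,|\Omega| .
\]
Combining the displays yields $\|u_k\|^2\le C(\Omega,\mu)+o(1)$. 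The delicate step is precisely this last one: only by invoking both the uniform $L^\infty$-bound and the hypothesis $\mu<\infty$ does one see that $\|u_k\|_{s_k}^2-|u_k|_2^2$ is of order $s_k$ rather than $O(1)$.
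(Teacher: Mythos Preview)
Your proof is correct and follows essentially the same route as the paper's: both reduce to the Fourier-side inequality $\mathcal E_L(u,u)\le s^{-1}(\|u\|_s^2-|u|_2^2)$, use the equation to convert $\|u_k\|_{s_k}^2$ into $|u_k|_{p_k}^{p_k}$, and then bound $s_k^{-1}\int_\Omega(u_k^{p_k}-u_k^2)\,dx$ via $\mu<\infty$. Your derivation of the key inequality from $\ln r\le(r^s-1)/s$ is a bit cleaner than the paper's explicit split of the Fourier integral over $\{|\xi|<1\}$ and $\{|\xi|\ge1\}$; conversely, for the last step the paper observes that the contribution from $\{u_k\ge1\}$ has a favorable sign and can simply be dropped, so it only needs $\sup_{t\in(0,1)}t|\ln t|<\infty$ there rather than the full $L^\infty$-bound you invoke—both arguments work.
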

\begin{proof}
By Lemma~\ref{smallorder} we have that $\|u_k\|$ is finite for all $k\in\N$. Fix $k\in\N$ and let $(\varphi_{n})_{n\in\N}\subset\mathcal{C}_{c}^{\infty}(\Omega)$ be such that $\varphi_{n}\rightarrow u_k$ in $\mathcal{H}_{0}^{s_k}(\Omega)$ as $n\to\infty$. We begin with the identity
\begin{align}
\mathcal{I}_{n}:=\frac{\|\varphi_{n}\|_{s_k}^{2}-|\varphi_{n}|_{2}^{2}}{s_k}=\int_{0}^{1}\int_{\mathbb{R}^{N}}|\xi|^{2s_k\tau}\ln(|\xi|^{2})|\widehat{\varphi_{n}}(\xi)|^{2}\, d\xi\, d\tau.\label{eq:723}
\end{align}
From the definition of $J_{s_k}$ (see \eqref{Jdef}) we have that 
\begin{align*}
\mathcal{I}_{n}=\frac{1}{s_k}\left(2J_{s_k}(\varphi_{n})+\frac{2}{p_k}|\varphi_{n}|_{p_k}^{p_k}\right)-\frac{|\varphi_{n}|_{2}^{2}}{s_k}=\frac{1}{s_k}\left( 2J_{s_k}(\varphi_{n})+\left(\frac{2-p_k}{p_k}\right)|\varphi_{n}|_{p_k}^{p_k}\right)+\frac{|\varphi_{n}|_{p_k}^{p_k}-|\varphi_{n}|_{2}^{2}}{s_k},
\end{align*}
and since $u_k$ is a solution of \eqref{eq:peq} and $\varphi_n\to u_k$ in $\cH^s_0(\Omega)$ as $n\to\infty$,
\begin{align*}
2J_{s_k}(\varphi_{n})+\left(\frac{2-p_k}{p_k}\right)|\varphi_n|_{p_k}^{p_k}
=2J_{s_k}(u_k)+\left(\frac{2-p_k}{p_k}\right)|u_k|_{p_k}^{p_k}+o(1)
=o(1)\quad\mbox{as}~n\rightarrow\infty,
\end{align*}
thus, 
\begin{align}
\mathcal{I}_{n}=\frac{|\varphi_{n}|_{p_k}^{p_k}-|\varphi_{n}|_{2}^{2}}{s_k}+o(1)\quad\mbox{as}~n\rightarrow\infty.\label{eq:In}
\end{align}
Observe that, 
\begin{align*}
&\frac{|\varphi_{n}|_{p_k}^{p_k}-|\varphi_{n}|_{2}^{2}}{s_k}=\frac{p_k-2}{s_k}\int_{0}^{1}\int_{\Omega}|\varphi_{n}|^{2+(p_k-2)\tau}\ln(|\varphi_{n}|)\, dx\, d\tau\\
&=\frac{p_k-2}{s_k}\left(\int_{0}^{1}\int_{\left\lbrace|\varphi_{n}|<1\right\rbrace}|\varphi_{n}|^{2+(p_k-2)\tau}\ln|\varphi_{n}|\, dx\, d\tau+\int_{0}^{1}\int_{\left\lbrace|\varphi_{n}|\geq 1\right\rbrace}|\varphi_{n}|^{2+(p_k-2)\tau}\ln|\varphi_{n}|\, dx\, d\tau\right)\\
&\leq \frac{p_k-2}{s_k}\int_{0}^{1}\int_{\left\lbrace|\varphi_{n}|<1\right\rbrace}|\varphi_{n}|^{2+(p_k-2)\tau}\ln|\varphi_{n}|\, dx\, d\tau
\leq  \frac{2-p_k}{s_k}|\Omega| \sup_{t\in(0,1)}|t||\ln|t||<\frac{2-p_k}{s_k}|\Omega|.
\end{align*}
Therefore, by \eqref{eq:In}, we have that 
\begin{align}
\mathcal{I}_{n}\leq \frac{2-p_k}{s_k}|\Omega|+o(1)\quad\mbox{as}~n\rightarrow\infty.\label{eq:724}
\end{align}
On the other hand, 
\begin{align}
\mathcal{I}_{n}&\geq\int_{0}^{1}\int_{\left\lbrace|\xi|<1\right\rbrace}|\xi|^{2s_k\tau}\ln(|\xi|^{2})|\widehat{\varphi}_{n}(\xi)|^{2}\, d\xi\, d\tau+\int_{\left\lbrace|\xi|\geq 1\right\rbrace}\ln(|\xi|^{2})|\widehat{\varphi}_{n}(\xi)|^{2}\, d\xi\notag\\
&=\int_{0}^{1}\int_{\left\lbrace|\xi|<1\right\rbrace}|\xi|^{2s_k\tau}\ln(|\xi|^{2})|\widehat{\varphi}_{n}(\xi)|^{2}\, d\xi\, d\tau-\int_{\left\lbrace|\xi|<1\right\rbrace}|\ln(|\xi|^{2})|\widehat{\varphi}_{n}(\xi)|^{2}\, d\xi+\int_{\mathbb{R}^{N}}\ln(|\xi|^{2})|\widehat{\varphi}_{n}(\xi)|^{2}\, d\xi\notag\\
&=\int_{0}^{1}\int_{\left\lbrace|\xi|<1\right\rbrace}\left(|\xi|^{2s_k\tau}-1\right)\ln(|\xi|^{2})|\widehat{\varphi}_{n}(\xi)|^{2}\, d\xi\, d\tau+\int_{\mathbb{R}^{N}}\ln(|\xi|^{2})|\widehat{\varphi}_{n}(\xi)|^{2}\, d\xi\notag\\
&\geq\int_{\mathbb{R}^{N}}\ln(|\xi|^{2})|\widehat{\varphi}_{n}(\xi)|^{2}\, d\xi=\mathcal{E}_{L}(\varphi_{n},\varphi_{n}).{eq:In}
\end{align}
By \eqref{eq:bilog}, there is $C_{3}=C_{3}(\Omega)>0$ such that $\mathcal{E}_{L}(\varphi_{n},\varphi_{n})\geq\|\varphi_{n}\|^{2}-C_{3}|\varphi_{n}|_{2}^{2}.$ Therefore, \eqref{eq:724}, \eqref{eq:In}, and Proposition \ref{prop1} yield the existence of $C_{4}=C_{4}(\Omega)>0$ such that
\begin{align}\label{vp}
\|\varphi_{n}\|^2\leq
\mathcal{I}_{n}+C_{3}|\varphi_{n}|_{2}^{2}\leq
\frac{2-p_k}{s_k}|\Omega|+C_4+o(1)\qquad \text{as $n\to\infty.$}
\end{align}
Using Lemma~\ref{smallorder} and the fact that $\varphi_n\to u_k$ in $\cH^{s_k}_0(\Omega)$ as $n\to\infty$, taking the limit in \eqref{vp} when $n\rightarrow\infty$ we obtain that $\|u_k\|^2\leq \frac{2-p_k}{s_k}|\Omega|+ C_{4}=(\mu+o(1))|\Omega|+C$ as $k\to\infty$.
\end{proof}

We are ready to show Theorem~\ref{case2}.

\begin{proof}[Proof of Theorem~\ref{case2}]
By Theorem~\ref{boundH}, passing to a subsequence, there is $C=C(\Omega,\mu)>0$ such that, $\|u_n\|\leq C$ for all $n\in\mathbb{N}.$
Then, passing to a further subsequence,
\begin{align}\label{em1}
    u_{{n}}\rightharpoonup u_{0}\quad\text{ in $\mathbb{H}(\Omega)$},
    \qquad 
    u_{{n}}\rightarrow u_{0}\quad \text{ in $L^{2}(\Omega)$},\qquad 
	u_{{n}}\rightarrow u_{0}\quad \text{ a.e. in $\Omega$}
\end{align}
for some $u_0\in \mathbb{H}(\Omega)$.  Let us first show that $u_{0}$ is a non-trivial solution of \eqref{eq:725}. Let $\varphi\in\mathcal{C}_{c}^{\infty}(\Omega)$, by \eqref{eq:726} the identity
\begin{align}
\int_{\Omega}u_{n}(\varphi+s_{n}L_{\Delta}\varphi+o(s_{n}))\, dx&=\int_{\Omega}u_{n}(-\Delta)^{s_{n}}\varphi\, dx=\int_{\Omega}|u_{n}|^{p_n-2}u_{n}\varphi \, dx \notag\\
&=\int_{\Omega}\left(u_{n}+s_{n}\frac{p_n-2}{s_n}\int_{0}^{1}\ln(|u_n|)|u_{n}|^{(p_n-2)\tau}u_{n}\,d\tau\right)\varphi \, dx\label{star}
\end{align}
holds in $L^{\infty}(\Omega)$ for every $n$. Then,  by \eqref{eq:725} and \eqref{star},
\begin{align}
\mathcal{E}_{L}(u_{n},\varphi)+o(1)&=\int_{\Omega}u_{n}L_{\Delta}\varphi \, dx+o(1)=\frac{p_n-2}{s_n}\int_{\Omega}\int_{0}^{1}\ln(|u_{n}|)|u_{n}|^{(p_n-2)\tau}u_{n}\,d\tau\varphi \, dx,\label{eq:728}
\end{align}
as $n\rightarrow\infty$ for all $\varphi\in\mathcal{C}_{c}^{\infty}(\Omega)$. Then, letting $n\rightarrow\infty$ and using Lemma~\ref{d:lemma},
\begin{align}
\mathcal{E}_{L}(u_{0},\varphi)=-\mu\int_{\Omega}\ln(|u_{0}|)u_{0}\varphi\, dx\quad\mbox{for all}~\varphi\in\mathcal{C}_{c}^{\infty}(\Omega).\label{eq:729}
\end{align}
By density, $u_{0}$ is a weak solution of \eqref{eq:727}. Now, let us show that $u_{0}$ is non-trivial. By Theorem~\ref{ebounds}, we know the existence of a positive constant $C=C(\Omega,\mu)>0$ such that
\begin{align*}
C\leq\|u_n\|_{s_n}^2=\int_{\Omega}|u_{n}|^{p_{n}}\, dx\leq|\Omega|^{\frac{2-p_{n}}{2}}\left(\int_{\Omega}|u_{n}|^{2}\, dx\right)^{\frac{p_{n}}{2}},
\end{align*}
and so, $C^{\frac{2}{p_{n}}}|\Omega|^{\frac{p_{n}-2}{p_n}}\leq\int_{\Omega}|u_{n}|^{2}\, dx. $ Letting $n\rightarrow\infty$ we conclude that  $0<C\leq\int_{\Omega}|u_{0}|^{2}\, dx.$ Therefore, $u_{0}\neq 0$. Since $u_{0}$ is a weak solution of \eqref{eq:727}, we have that 
\begin{align*}
J_{0}(u_{0})=\frac{\mathcal{E}_{L}(u_{0},u_{0})}{2}+\frac{\mu}{4}\int_{\Omega}u_{0}^{2}\left(\ln(u_{0}^{2})-1\right)\, dx=-\frac{\mu}{4}\int_{\Omega}u_{0}^{2}\, dx.
\end{align*}
To see that $u_0$ is of least energy it remains to show that $-\frac{\mu}{4}|u_{0}|_{2}^{2}=\inf_{\mathbb{H}(\Omega)}J_{0}.$ By H\"older's inequality,
\begin{align*}
0\leq\limsup_{n\rightarrow\infty}|u_{n}-u_{0}|_{p_{n}}\leq\limsup_{n\rightarrow\infty} |\Omega|^{\frac{2-p_{n}}{2p_{n}}}|u_{n}-u_{0}|_{2}=0,
\end{align*}
thus, using Proposition~\ref{prop1} and Lemma~\ref{lem2.2}, $\lim\limits_{n\rightarrow\infty}\|u_{n}\|_{s_{n}}^{2}=\lim\limits_{n\rightarrow\infty}|u_{n}|_{p_n}^{p_n}=|u_{0}|_{2}^{2}.$ Then,
\begin{align}
-\frac{\mu}{4}\lim_{n\rightarrow\infty}\|u_{n}\|_{s_{n}}^{2}=-\frac{\mu}{4}\lim_{n\rightarrow\infty}|u_{n}|_{p_n}^{p_n}=-\frac{\mu}{4}|u_{0}|_{2}^{2}=J_{0}(u_{0}).
\label{eq:736}
\end{align}
On the other hand, by Theorem~\ref{exisleast}, there is $v_{0}\in\mathbb{H}(\Omega)$ such that $J_{0}(v_{0})=\inf_{\mathbb{H}(\Omega)}J_{0}$ and by \cite[Theorem 3.1]{CW19} there is a sequence $(v_{k})_{k\in\N}\subset\mathcal{C}_{c}^{\infty}(\Omega)$ such that $v_{k}\rightarrow v_{0}$ in $\mathbb{H}(\Omega)$ as $k\rightarrow\infty$. Since $v_{k}\in\mathcal{C}_{c}^{\infty}(\Omega)$ for all $k\in\mathbb{N}$ and $u_n$ is of least energy (by uniqueness \cite[Theorem 6.1]{BFMST18}), we have that
\begin{align*}
-\frac{\mu}{4}\lim_{n\rightarrow\infty}\|u_{n}\|_{s_{n}}^{2}=\lim_{n\rightarrow\infty}\frac{1}{s_{n}}J_{s_{n}}(u_{n})\leq\lim_{n\rightarrow\infty}\frac{1}{s_{n}}J_{s_{n}}(v_{k}).
\end{align*}
By \eqref{eq:weakaprox}, we obtain the following inequality
\begin{align}
-\frac{\mu}{4}\lim_{n\rightarrow\infty}\|u_{n}\|_{s_{n}}^{2}
&\leq-\frac{\mu}{4}|v_{k}|_{2}^{2}+\frac{1}{2}\left(\mathcal{E}_{L}(v_{k},v_{k})+\mu\int_{\mathbb{R}^{N}}|v_{k}|^{2}\ln|v_{k}|\, dx\right)\notag\\
&=-\frac{\mu}{4}|v_{0}|_{2}^{2}+o(1)
=J_0(v_0)+o(1)=\inf_{\mathbb{H}(\Omega)}J_{0}+o(1)
\label{eq:743}
\end{align}
as $k\rightarrow\infty$, according with Lemma~\ref{haprox}. Therefore, by \eqref{eq:736} and \eqref{eq:743},
\begin{align*}
\inf_{\mathbb{H}(\Omega)}J_{0}\leq J_{0}(u_{0})=-\frac{\mu}{4}|u_{0}|^{2}_{2}
=-\frac{\mu}{4}\lim_{n\rightarrow\infty}\|u_{n}\|_{s_{n}}^{2}\leq\inf_{\mathbb{H}(\Omega)}J_{0}
\end{align*}
as claimed.  Since $u_{0}\in\mathbb{H}(\Omega)$ is a least energy solution of \eqref{eq:727}, Theorem~\ref{exisleast} implies that $u_{0}$ does not change sign in $\Omega$.

To conclude the proof, we show that $u_0\in L^\infty(\Omega)$ and 
\begin{align}\label{C0cl}
|u_0|_\infty\leq ((2\operatorname{diam}(\Omega))^{2}e^{\frac{1}{2}-\rho_{N}})^{\frac{1}{\mu}}=:C_0.
\end{align}
By Proposition~\ref{prop1}, $|u_{n}|_{\infty}\leq C_0+o(1)$ as $n\to\infty$. Assume, by contradiction, that there is $\eps>0$ and set $\omega\in \Omega$ of positive measure such that $|u_{0}|>(1+\eps)C_0$ in $\omega$. This implies that 
\begin{align*}
|u_{n}(x)-u_{0}(x)|\geq|u_{0}(x)|-|u_{n}(x)|>(1+\eps)C_0-C_{0}=\eps C_0\quad \text{ for a.e. } x\in\omega.
\end{align*}
Thus, $\int_{\Omega}|u_{n}-u_{0}|^{2}\, dx\geq\int_{\omega}|u_{n}-u_{0}|^{2}\, dx>\eps C_0|\omega|>0,$ which contradicts the $L^{2}$-convergence of $u_{n}$ to $u_{0}$. Therefore, \eqref{C0cl} holds. In consequence, up to a subsequence, the convergence $u_{n}\rightarrow u_{0}$ in $L^{q}(\Omega)$ for any $1\leq q<\infty$ now follows by the dominated convergence theorem. Finally, since \eqref{eq:727} has a unique least energy solution, we have that the limit $u_0$ is independent of the chosen subsequence of $(u_n)_{n\in\N}$, therefore the whole sequence $(u_n)_{n\in\N}$ must also converge to $u_0$ in $L^2(\Omega)$.
\end{proof}

\begin{remark}\label{rmk:r:1}
One could also phrase the statement of Theorem~\ref{case2} as follows: Let $\Omega\subset \R^N$ be an open bounded Lipschitz set.  Let $h:(0,1)\to (0,1)$ be a function such that $h(s)/s\to \mu\in(0,\infty)$ as $s\to 0^+$. For $s\in(0,1)$, let $u_s$ be the unique positive solution of
\begin{align*}
    (-\Delta)^{s} u_s = u_s^{1-h(s)}\quad \text{ in }\Omega,\qquad u_s=0\quad \text{ on }\R^N\backslash \Omega.
\end{align*}
Then $u_{s}\rightarrow u_{0}$ in $L^{q}(\mathbb{R}^{N})$ as $s\rightarrow 0^+$ for all $1\leq q<\infty,$ where $u_0\in\mathbb{H}(\Omega)\cap L^{\infty}(\Omega)\backslash \{0\}$ is the unique nonnegative least energy solution of \eqref{eq:727}.
\end{remark}

Since the nonlinearity $-\mu\ln|u|u$ can change sign even if $u\geq 0$, one cannot use  standard maximum principles to characterize the sign properties of the solution; however, in the next result we show a strong maximum principle for continuous weak solutions of \eqref{mu:p} by working on small neighborhoods and using the negative sign of $-\mu$.
\begin{lemma}\label{strong:mp}
Let $v\in\mathcal{C}(\mathbb{R}^{N})$ be a nontrivial nonnegative weak solution of \eqref{mu:p}, then $v>0$ in $\Omega$.
\end{lemma}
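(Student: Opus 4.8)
The plan is to argue by contradiction, localizing around a hypothetical interior zero of $v$ and exploiting the favorable sign of the nonlinear term there. Assume $v(x_{0})=0$ for some $x_{0}\in\Omega$. Since $v\geq 0$, the point $x_{0}$ is a global minimum of $v$ on $\mathbb{R}^{N}$; moreover $v$ is bounded, being continuous and identically zero outside the bounded set $\Omega$. Hence the right-hand side $g:=-\mu\ln(v)v$ of \eqref{mu:p} belongs to $L^{\infty}(\Omega)\cap C(\mathbb{R}^{N})$ (using $\lim_{t\to 0^{+}}t\ln t=0$), and $v$ is a bounded continuous weak solution of the \emph{linear} problem $L_{\Delta}v=g$ in $\Omega$, $v=0$ on $\mathbb{R}^{N}\setminus\Omega$.

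First I would pass to a small ball. By continuity, choose $r>0$ with $\overline{B_{r}(x_{0})}\subset\Omega$ and $v<1$ on $\overline{B_{r}(x_{0})}$; using the scaling relation $\lambda_{1}^{L}(B_{r})=\lambda_{1}^{L}(B_{1})-2\ln r\to+\infty$ as $r\to 0^{+}$, we may also assume $\lambda_{1}^{L}(B_{r}(x_{0}))>0$. Write $B:=B_{r}(x_{0})$. On $B$ we have $0\leq v<1$, hence $-\mu\ln(v)v\geq 0$, so testing the weak equation against any $0\leq\varphi\in C_{c}^{\infty}(B)$ gives $\mathcal{E}_{L}(v,\varphi)\geq 0$; that is, $v$ is a nonnegative weak supersolution of $L_{\Delta}w=0$ on a set where $L_{\Delta}$ enjoys a maximum principle. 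This is precisely where the sign $-\mu<0$ and the smallness of the neighborhood are used.

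It then remains to invoke a strong maximum principle for $L_{\Delta}$ to conclude $v\equiv 0$. Concretely, by interior regularity for $L_{\Delta}$ with bounded right-hand side (see \cite{CW19,FJW20}), $v$ is regular enough in $B$ that the pointwise representation of $L_{\Delta}v$ holds classically there; evaluating at the minimum $x_{0}$, where $v(x_{0})-v(y)=-v(y)\leq 0$ for all $y$, one obtains
\begin{align*}
L_{\Delta}v(x_{0})=-c_{N}\int_{\mathbb{R}^{N}}\frac{v(y)}{|x_{0}-y|^{N}}\,dy\leq 0,
\end{align*}
with equality only if $v\equiv 0$ a.e., whereas on the other hand $L_{\Delta}v(x_{0})=g(x_{0})=-\mu\ln(v(x_{0}))v(x_{0})=0$. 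Therefore $v\equiv 0$, contradicting that $v$ is nontrivial; hence $v$ has no zero in $\Omega$, i.e. $v>0$ in $\Omega$.

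The main obstacle is the step turning ``$v\geq 0$, $v$ a supersolution, $v(x_{0})=0$'' into ``$v\equiv 0$'' — equivalently, making the pointwise evaluation above rigorous, which rests on the interior regularity theory for $L_{\Delta}$ with bounded data. If one prefers to stay variational, an alternative is to use that on the small ball $B$, where $\lambda_{1}^{L}(B)>0$, the Dirichlet resolvent of $L_{\Delta}$ is positivity improving: writing $v|_{B}$ as the $L_{\Delta}$-Green potential of $g\chi_{B}\geq 0$ plus the $L_{\Delta}$-harmonic extension of $v\chi_{\mathbb{R}^{N}\setminus B}\geq 0$ forces $v>0$ in $B$ unless both contributions vanish, i.e. unless $v\equiv 0$. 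Either route hinges on the positivity and regularity theory for $L_{\Delta}$ on small balls; the rest is the elementary contradiction displayed above.
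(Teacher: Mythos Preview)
Your overall strategy --- contradiction, localize to a small ball $B_r(x_0)$ where $\lambda_1^L>0$ and the right-hand side $g=-\mu\ln(v)v$ is nonnegative --- is exactly the paper's. The gap is the one you flag yourself: the pointwise evaluation of $L_\Delta v(x_0)$. You cite \cite{CW19,FJW20} for interior regularity, but neither reference establishes that a weak solution with merely continuous (or bounded) right-hand side is classical at~$x_0$. In fact, the paper's Remark~\ref{reg:rmk} says explicitly that the best regularity known for $v$ (log-H\"older, from \cite{CS22}) is \emph{not} enough to evaluate $L_\Delta v$ pointwise, and closing this is listed as an open problem in \cite{CS22}. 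So your main route is blocked by precisely the regularity question the paper itself cannot answer.

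The paper sidesteps this by a barrier argument using only the \emph{weak} maximum principle on $B_r(x_0)$. It fixes an open set $V\subset\{v>\delta\}$ at positive distance from $B_r(x_0)$, solves $L_\Delta\tau=1$ in $B_r(x_0)$ with $\tau=0$ outside (here \cite{CS22} \emph{does} give a classical solution, since the right-hand side is constant; this is the only place a pointwise evaluation is needed, to verify $\tau>0$), computes $L_\Delta\chi_V\leq -K<0$ on $B_r(x_0)$ (trivial because $\chi_V\equiv 0$ near $x_0$), and sets $\varphi=\tfrac{K}{2}\tau+\chi_V$. Then $L_\Delta(v-\delta\varphi)\geq 0$ weakly in $B_r(x_0)$ and $v-\delta\varphi\geq 0$ outside, so the weak maximum principle yields $v\geq\delta\varphi\geq\tfrac{\delta K}{2}\tau>0$ in $B_r(x_0)$, contradicting $v(x_0)=0$. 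This is your ``alternative'' made concrete: the torsion function $\tau$ plays the role of the Green potential of a nonnegative source, and $\chi_V$ imports the nonlocal positivity from the region where $v$ is strictly positive. The point is that the classical-regularity burden is shifted from $v$ (where it is unavailable) to $\tau$ (where it is immediate).
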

\begin{proof}
By contradiction, assume that there is $x_{0}\in\Omega$ such that 
\begin{align}\label{c0}
v(x_{0})=0.    
\end{align}
 By continuity and because $v\neq 0$, there are $\delta>0$, an open set $V\subset\{x\in\Omega~:~v(x)>\delta\}$, and $r>0$ such that 
$-\mu\ln|v|v\geq 0$  in $B_{r}(x_{0})$ and $\operatorname{dist}(B_{r}(x_{0}),V)>0.$ 

By \cite[Corollary 1.9]{CW19}, we can consider, if necessary, $r$ smaller so that $L_{\Delta}$ satisfies the weak maximum principle in $B_{r}(x_{0})$ and $\lambda_1^L>0$, where $\lambda_1^L$ is the first eigenvalue of $L_\Delta$. Now, a standard application of the Riesz representation theorem yields the existence of a unique solution $\tau\in\mathbb{H}(\Omega)$ of
\begin{align*}
L_{\Delta}\tau=1\quad\mbox{in}~B_{r}(x_{0}),\qquad
\tau=0\quad\mbox{in}~\mathbb{R}^{N}\setminus B_{r}(x_{0}).
\end{align*}
Moreover, by \cite[Theorem 1.1]{CS22}, we know that $\tau$ is a classical solution, namely, that $L_\Delta \tau(x)=1$ holds pointwisely for $x\in\Omega$. This implies that $\tau>0$ in $B_{r}(x_{0})$, since if $\tau(y_0)=0$ for some $y_0\in \Omega$, then
\begin{align*}
1=L_\Delta \tau(y_0)=-c_N\int_{B_r(x_0)}\frac{\tau(y)}{|y_0-y|^N}\, dy<0,
\end{align*}
which would yield a contradiction.  Now we argue as in \cite{JF}. Let $\chi_{V}$ denote the characteristic function of $V$ and note that, for $x\in B_{r}(x_{0})$, $\chi_{V}(x)=0$ and therefore 
\begin{align*}
L_{\Delta}\chi_{V}(x)=-c_{N}\int_{\mathbb{R}^{N}}\frac{\chi_{V}(y)}{|x-y|^{N}}\, dy=-c_{N}\int_{V}\frac{1}{|x-y|^{N}}\, dy\leq -c_{N}|V|\inf_{z\in B_r(x_0)}(|z-y|^{-N}).
\end{align*}
Let $K:=c_{N}|V|\inf_{z\in B_r(x_0)}(|z-y|^{-N})$ and $\varphi:=\frac{K}{2}\tau+\chi_{V}$. Then, $L_{\Delta}\varphi\leq K/2-K\leq 0$ in $B_{r}(x_{0})$. Moreover, since $v>\delta$ in $V$, we have that 
\begin{align}
L_{\Delta}(v-\delta\varphi)\geq 0~\mbox{in}~B_{r}(x_{0}),\qquad 
v-\delta\varphi\geq 0~\mbox{in}~\mathbb{R}^{N}\setminus B_{r}(x_{0})
\end{align}
in the weak sense. Then, by the weak maximum principle (see \cite[Corollary 1.8]{CW19}) we obtain that $v\geq\delta\varphi\geq \delta\tau>0$ in $B_{r}(x_{0}),$ a contradiction to \eqref{c0}.  Therefore $v>0$ in $\Omega$.
\end{proof}

\begin{proof}[Proof of Theorem~\ref{exisleast:intro}]
Existence and uniqueness of least energy solutions follow from Theorem~\ref{exisleast}, and the estimate \eqref{C0clthm} follows from \eqref{C0cl}, by uniqueness.  Assume now that $\Omega$ satisfies a uniform exterior sphere condition, then, since $v\in L^\infty(\Omega)$, it follows that $\ln|v|v\in L^\infty(\Omega)$, and, by \cite[Theorem 1.11]{CW19}, we have that $v\in C(\overline{\Omega})$.   The estimate \eqref{lhr} follows from \cite[Corollary 5.8]{CS22} and a standard density argument. The fact that $|v|>0$ in $\Omega$ follows from Lemma~\ref{strong:mp}.
\end{proof}

\begin{remark}\label{reg:rmk}
Note that the regularity in \eqref{lhr} is not enough to guarantee that $u$ is a classical solution, namely, that $L_\Delta u$ can be evaluated pointwisely. This would require a refinement of \cite[Theorem 1.1]{CS22}, see \cite[Section 6, open problem (1)]{CS22}. \end{remark}

\subsection{Asymptotically sublinear case}

Now we focus our attention on the analysis of solutions $u_n$ of \eqref{eq:peq} when $\lim_{n\to\infty}p_n\in[1,2)$, which we call the asymptotically sublinear case.  We begin by considering an auxiliary nonlinear eigenvalue problem in a rescaled domain. Let $(s_n)\subset (0,1)$ be such that $\lim_{n\to \infty}s_n=0$, 
\begin{align*}
p_{n}\subset (1,2)\quad \text{ be such that }\quad \lim_{n\to \infty}p_n = p\in [1,2).     
\end{align*}
Let $\lambda:=|\Omega|$ and $\Omega_{\lambda}:=\frac{1}{\lambda}\Omega$ (note that $|\Omega_{\lambda}|=1$). Set
\begin{align}
\Lambda_{0}&:=\inf\left\lbrace \int_\Omega|v|^{2}\, dx \::\: v\in L^2(\Omega_{\lambda})\quad \text{ and }\quad \int_{\Omega_\lambda}|v|^{p}\, dx=1\right\rbrace,\label{l0}\\
\Lambda_{n}&:=\inf\left\lbrace \|v\|_{s_{n}}^{2}:v\in\mathcal{H}_{0}^{s_{n}}(\Omega_{\lambda}),\ |v|^{p_n}_{p_{n}}=1\right\rbrace,
\end{align}
and let $\chi_{\Omega_\lambda}$ denote the characteristic function of $\Omega_\lambda$.

\begin{lemma}\label{lem:lam0} The infimum $\Lambda_0$ is achieved at $\chi_{\Omega_\lambda}$; in particular, $\Lambda_0=1=|\chi_{\Omega_\lambda}|_2^2$.
\end{lemma}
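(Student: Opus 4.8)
The plan is to establish the two bounds $\Lambda_0\le 1$ and $\Lambda_0\ge 1$ separately. For the upper bound I would simply test with $\chi_{\Omega_\lambda}$: it is admissible since $|\chi_{\Omega_\lambda}|_p^p=\int_{\Omega_\lambda}1\,dx=|\Omega_\lambda|=1$, and it gives $|\chi_{\Omega_\lambda}|_2^2=|\Omega_\lambda|=1$, so $\Lambda_0\le 1=|\chi_{\Omega_\lambda}|_2^2$.

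For the lower bound, let $v\in L^2(\Omega_\lambda)$ (extended by zero outside $\Omega_\lambda$) be an arbitrary admissible function, i.e. $|v|_p^p=1$. Since $p\in[1,2)$, H\"older's inequality on $\Omega_\lambda$ with the conjugate exponents $\tfrac2p$ and $\tfrac{2}{2-p}$, applied to $|v|^p=|v|^p\cdot 1$, gives
\[
1=\int_{\Omega_\lambda}|v|^p\,dx\le\Big(\int_{\Omega_\lambda}|v|^2\,dx\Big)^{\frac p2}\Big(\int_{\Omega_\lambda}1\,dx\Big)^{\frac{2-p}{2}}=\Big(\int_{\Omega_\lambda}|v|^2\,dx\Big)^{\frac p2},
\]
where the last equality uses $|\Omega_\lambda|=1$. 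Raising to the power $\tfrac2p>0$ yields $|v|_2^2=\int_{\Omega_\lambda}|v|^2\,dx\ge1$, and taking the infimum over admissible $v$ gives $\Lambda_0\ge 1$. Equivalently, one can invoke Jensen's inequality for the convex map $t\mapsto t^{2/p}$ against the probability measure $dx$ on $\Omega_\lambda$, obtaining $1=(|v|_p^p)^{2/p}\le|v|_2^2$.

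Combining the two bounds gives $\Lambda_0=1=|\chi_{\Omega_\lambda}|_2^2$, with the infimum attained at $\chi_{\Omega_\lambda}$, as claimed. I do not anticipate any genuine obstacle: the only point worth keeping in mind is that the embedding $L^2(\Omega_\lambda)\hookrightarrow L^p(\Omega_\lambda)$ for $p<2$ holds with constant $1$ precisely because $|\Omega_\lambda|=1$ — on a set of infinite measure this inequality would fail — which is exactly why one rescales $\Omega$ to $\Omega_\lambda$ before studying the limit problem.
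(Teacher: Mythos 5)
Your proposal is correct and follows essentially the same route as the paper: the upper bound comes from testing with $\chi_{\Omega_\lambda}$, and the lower bound is the paper's inequality $1=|v|_p^p\leq|v|_2^p$ (valid because $|\Omega_\lambda|=1$), which you have merely written out as an explicit H\"older/Jensen step. No gap; your remark about the unit measure of $\Omega_\lambda$ being what makes the embedding constant equal to $1$ is exactly the point.
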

 \begin{proof}
 Clearly, $\Lambda_0\leq 1$, because $|\Omega_\lambda|=1=|\chi_{\Omega_{\lambda}}|_2^2=|\chi_{\Omega_{\lambda}}|_{p}^{p}$. On the other hand, for each $v\in \left\lbrace v\in L^2(\Omega_{\lambda})\::\: v=0\text{ in }\R^N\backslash \Omega_\lambda \text{ and }|v|^p_{p} = 1\right\rbrace$ it holds that
$1=|v|^p_{p}\leq|v|_{2}^{p}$, thus $1\leq \Lambda_{0}$.
 \end{proof}

\begin{proposition}\label{prop:Lamn}
For every $n\in\mathbb N$ there is $v_n\in \mathcal{H}_{0}^{s_{n}}(\Omega_{\lambda})$ such that $\Lambda_n=\|v_n\|_{s_{n}}^{2}$. Moreover, $v_n\to 1$ in $L^2(\Omega_\lambda)$, $\Lambda_n\to 1$ as $n\to \infty$, and $(v_{n})_{n\in\N}$ is a minimizing sequence for $\Lambda_{0}$.
\end{proposition}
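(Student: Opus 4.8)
The plan is to combine three ingredients already at hand: the Fourier representation $\|v\|_{s_n}^2=\int_{\R^N}|\xi|^{2s_n}|\widehat v(\xi)|^2\,d\xi$, elementary convexity inequalities on $\Omega_\lambda$ (which has unit measure), and the uniform $L^\infty$-bound of Proposition~\ref{prop1} applied to a rescaled equation. Since no compact Sobolev embedding survives the limit $s_n\to 0^+$, the strong $L^2$-convergence of $v_n$ cannot come from weak compactness; it will instead be squeezed out between the $L^1$- and $L^\infty$-norms of $v_n$.

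First I would produce the minimizers by the direct method. A minimizing sequence for $\Lambda_n$ is bounded in $\cH^{s_n}_0(\Omega_\lambda)$ (using \eqref{eq:poincin}), so along a subsequence it converges weakly there and strongly in $L^{p_n}(\Omega_\lambda)$ by the compact fractional Sobolev embedding ($p_n<2$); weak lower semicontinuity of $\|\cdot\|_{s_n}$ then gives a minimizer $v_n$, and, replacing $v_n$ by $|v_n|$ (which does not increase the Gagliardo seminorm and preserves $|v_n|_{p_n}^{p_n}=1$), we may take $v_n\ge 0$; moreover $0<\Lambda_n<\infty$. As a constrained minimizer, $v_n$ solves $(-\Delta)^{s_n}v_n=\Lambda_n v_n^{p_n-1}$ in $\Omega_\lambda$, $v_n=0$ on $\R^N\setminus\Omega_\lambda$ — the Lagrange multiplier equals $\Lambda_n$ after testing with $v_n$ and using $|v_n|_{p_n}^{p_n}=1$.

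Next I would establish $\Lambda_n\to 1$ \emph{without using any $L^\infty$-information}, so that the later appeal to Proposition~\ref{prop1} is not circular. For the upper bound, given $\delta>0$ choose $\phi_\delta\in\mathcal C^\infty_c(\Omega_\lambda)$ with $0\le\phi_\delta\le 1$ and $|\phi_\delta-\chi_{\Omega_\lambda}|_2<\delta$; then $\phi_\delta/|\phi_\delta|_{p_n}$ is admissible for $\Lambda_n$, and since $\|\phi_\delta\|_{s_n}^2\to|\phi_\delta|_2^2$ (by \cite[Corollary 3]{BBM01}, or by dominated convergence in $\int_{\R^N}|\xi|^{2s_n}|\widehat{\phi_\delta}|^2\,d\xi$) and $|\phi_\delta|_{p_n}\to|\phi_\delta|_p$ (by Lemma~\ref{lem2.2}), one gets $\limsup_{n\to\infty}\Lambda_n\le|\phi_\delta|_2^2/|\phi_\delta|_p^2$, which tends to $1$ as $\delta\to0$. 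For the lower bound, Jensen's inequality with the concave map $t\mapsto t^{p_n/2}$ on the probability space $\Omega_\lambda$ gives $1=|v_n|_{p_n}^{p_n}\le|v_n|_2^{p_n}$, i.e. $|v_n|_2\ge 1$; and, writing $\|v_n\|_{s_n}^2=\int_{\R^N}|\xi|^{2s_n}|\widehat{v_n}|^2\,d\xi$, using $|\xi|^{2s_n}\ge 1$ on $\{|\xi|\ge 1\}$, $|\xi|^{2s_n}\ge 1+2s_n\ln|\xi|$ on $\{|\xi|<1\}$, and $|\widehat{v_n}|_\infty\le|v_n|_1\le|\Omega_\lambda|^{1-1/p_n}|v_n|_{p_n}=1$, one obtains $\Lambda_n\ge|v_n|_2^2-Cs_n\ge 1-Cs_n$ for a dimensional constant $C$. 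Hence $\Lambda_n\to 1$.

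With $\Lambda_n\to 1$ in hand I would extract the $L^\infty$-bound and conclude. The rescaled function $u_n:=\Lambda_n^{-1/(2-p_n)}v_n$ solves $(-\Delta)^{s_n}u_n=|u_n|^{p_n-2}u_n$ in $\Omega_\lambda$ with $u_n=0$ outside, and since $\lim_{n\to\infty}\frac{s_n}{2-p_n}=0$ (because $p<2$), Proposition~\ref{prop1} gives $|u_n|_\infty\le 1+o(1)$; as $\Lambda_n^{1/(2-p_n)}\to 1$, also $|v_n|_\infty\le 1+o(1)$. Using $|\Omega_\lambda|=1$, this forces $|v_n|_2\le|v_n|_\infty\le 1+o(1)$, and then $\int_{\Omega_\lambda}v_n\ge|v_n|_2^2/|v_n|_\infty\ge 1-o(1)$ while $\int_{\Omega_\lambda}v_n\le|v_n|_2\le 1+o(1)$, so $\int_{\Omega_\lambda}v_n\to 1=|\Omega_\lambda|$. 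Writing $|v_n|_\infty\le 1+\eps_n$ with $\eps_n\to 0$, we then have $\int_{\Omega_\lambda}(1+\eps_n-v_n)\,dx=1+\eps_n-\int_{\Omega_\lambda}v_n\to 0$ with nonnegative integrand, whence $v_n\to 1$ in $L^1(\Omega_\lambda)$; since $|v_n-1|\le 1$ for large $n$, we conclude $v_n\to 1$ in $L^q(\Omega_\lambda)$ for every $1\le q<\infty$, in particular in $L^2(\Omega_\lambda)$. In particular $\int_{\Omega_\lambda}|v_n|^p\to 1$ and $\int_{\Omega_\lambda}|v_n|^2\to 1=\Lambda_0$ by Lemma~\ref{lem:lam0}, so $(v_n)_{n\in\N}$ is a minimizing sequence for $\Lambda_0$ (after normalizing by $|v_n|_p\to 1$ if exact admissibility is required). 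The delicate point is exactly this ordering: since compactness is unavailable, the strong convergence must be manufactured from the $L^\infty$-bound, and that bound presupposes $\Lambda_n\to 1$, which therefore has to be proved first by the Fourier and Jensen estimates alone.
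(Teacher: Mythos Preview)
Your proof is correct and reaches the same conclusions, but it proceeds along a genuinely different route from the paper in two places.

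For the lower bound $\liminf_n\Lambda_n\ge 1$, the paper argues indirectly: it first uses Proposition~\ref{prop1} (via the rescaled equation) to get a uniform $L^\infty$-bound on $v_n$, then Lemma~\ref{lem2.2} to show $|v_n|_p\to 1$, and finally the chain $\Lambda_0\le |v_n|_2^2/|v_n|_p^2\le \lambda_{1,s_n}^{-1}\Lambda_n/|v_n|_p^2$. Your argument is more direct and entirely self-contained: Jensen on the probability space $\Omega_\lambda$ gives $|v_n|_2\ge 1$, and the Fourier identity together with $|\xi|^{2s_n}\ge 1+2s_n\ln|\xi|$ and the bound $|\widehat{v_n}|_\infty\le|v_n|_1\le 1$ yields the quantitative estimate $\Lambda_n\ge |v_n|_2^2-Cs_n\ge 1-Cs_n$. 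This avoids any appeal to $L^\infty$-regularity at this stage and even gives a rate.

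For the strong $L^2$-convergence $v_n\to 1$, the paper argues by contradiction: assuming $\int_{\Omega_\lambda}|v_n-1|^2\ge\delta$ forces $\int_{\Omega_\lambda}v_n\le 1-\tfrac{\delta}{2}+o(1)$, which is then fed into the Young-type splitting $1=|v_n|_{p_n}^{p_n}\le (p_n-1)|v_n|_2^2+(2-p_n)|v_n|_1$ to reach a contradiction. You instead exploit the \emph{sharp} bound $|v_n|_\infty\le 1+o(1)$ delivered by Proposition~\ref{prop1} with $k=0$: together with $|v_n|_2\ge 1$ this squeezes both $|v_n|_2$ and $\int_{\Omega_\lambda}v_n$ to $1$, whence $1+\eps_n-v_n\ge 0$ with integral tending to $0$ gives $v_n\to 1$ in $L^1$, and boundedness upgrades this to $L^q$. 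Your argument is shorter and more transparent, but it relies on the constant in Proposition~\ref{prop1} being exactly $1$ when $k=0$; the paper's Young-inequality route only needs \emph{some} uniform $L^\infty$-bound and is therefore more robust if that sharp constant were unavailable.
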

\begin{proof}
Using the compact embedding of $\cH^{s_n}_0(\Omega_\lambda)$ into $L^{p_n}(\Omega_\lambda)$ and standard arguments, we have that the infimum $\Lambda_n$ is achieved at some nontrivial $v_n\in \cH^{s_n}_0(\Omega_\lambda)$. We can assume w.l.o.g. that $v_n$ is nonnegative.  By the Lagrange multiplier theorem, each $v_n$ is a solution of
\begin{align}\label{vn:eq}
    (-\Delta)^{s_n}v_n=\Lambda_{n} v_n^{p_n-1}\quad \text{ in }\Omega_\lambda,\qquad v_n\in \cH^{s_n}_0(\Omega_\lambda).
\end{align}
Let $\varphi\in C^\infty_c(\Omega_{\lambda})\backslash\{0\}$ and recall that $\lim_{n\to \infty}p_n = p\in [1,2)$, then
\begin{align*}
\Lambda_{n}
=\|v_{n}\|_{s_{n}}^{2}
\leq\frac{\|\varphi\|_{s_{n}}^{2}}{|\varphi|^2_{p_n}}
=\frac{|\varphi|_{2}^{2}}{|\varphi|^2_{p}}+o(1)
\qquad \text{ as }n\to \infty,
\end{align*}
because $|\varphi|_{p_n}\rightarrow|\varphi|_{p}$ and $\|\varphi\|_{s_{n}}^{2}\rightarrow|\varphi|_{2}^{2}$ as $n\rightarrow\infty$. Thus, passing to a subsequence, $\Lambda_{n}=\|v_{n}\|_{s_{n}}^{2}\rightarrow\Lambda_{0}^{\ast}$ as $n\to\infty$ for some $\Lambda_0^*\geq 0$.  Observe that 
\begin{align}
\Lambda_{0}^{\ast}\leq\frac{|\varphi|_{2}^{2}}{|\varphi|^2_{p}}\qquad\text{ for all }\varphi\in\mathcal{C}_{c}^{\infty}(\Omega_{\lambda})\backslash \{0\}.\label{eq:113}
\end{align}
Let $\Lambda_0$ be as in \eqref{l0}.  By Lemma \ref{lem:lam0}, \eqref{eq:113}, and the density of $C^\infty_c(\Omega_\lambda)$ in $L^2(\Omega)$,
\begin{align*}
\Lambda_{0}^{\ast}
\leq\Lambda_{0}
\leq\frac{|v_{n}|_{2}^{2}}{|v_{n}|_{p}^{2}}
\leq \lambda_{1,s_n}\frac{\|v_{n}\|_{s_{n}}^{2}}{|v_{n}|_{p}^{2}}
=(1+o(1))\frac{\Lambda_n}{|v_{n}|_{p}^{2}},
\end{align*}
as $n\to\infty$, where we have used that $1+o(1)=\lambda_{1,s_n}:=\inf\{\|v\|_{s_n}^2\::\: v\in\cH^{s_n}_0(\Omega_\lambda)\text{ and }|v|_2=1\}$ as $n\to\infty$, see \cite[Theorem 1.1]{FJW20}.  Notice that, by Proposition~\ref{prop1}, the sequence $(v_{n})_{n\in\N}$ is uniformly bounded in $L^{\infty}(\Omega_{\lambda})$. Thus, Lemma~\ref{lem2.2} yields that $\left\lvert\int_{\Omega}|v_{n}|^{p}-\int_{\Omega}|v_{n}|^{p_{n}}\right\rvert=o(1)$ as $n\rightarrow\infty$ and, since $|v_{n}|_{p_{n}}=1$, $    \lim\limits_{n\rightarrow\infty}|v_{n}|_{p}=1.$  Then $\Lambda_{0}\leq\Lambda_{0}^{\ast}$ and therefore $\Lambda_{0}=\Lambda_{0}^{\ast}$, namely,
\begin{align}
\|v_{n}\|_{s_{n}}^{2}=\Lambda_{n}\rightarrow\Lambda_{0}\quad\mbox{as}\quad n\rightarrow\infty.\label{eq:114}
\end{align}
Now, since
$\Lambda_{0}\leq\frac{|v_{n}|_{2}^{2}}{|v_{n}|_{p}^{2}}\leq \frac{\|v_{n}\|_{s_{n}}^{2}}{|v_{n}|_{p}^{2}}\lambda_{1,s_n}^{-1},$
\begin{align*}
\left(1+o(1)\right)\Lambda_{0}=|v_{n}|_{p}^{2}\Lambda_{0}\leq |v_{n}|_{2}^{2}\leq (\lambda_{1,s_{n}})^{-1}\Lambda_{n}=\left(1+o(1)\right)\left(\Lambda_{0}+o(1)\right)
\end{align*}
as $n\to\infty$. As a consequence, $v_{n}$ is a minimizing sequence for $\Lambda_{0}$, namely, 
\begin{align}\label{eq:115}
|v_{n}|_{2}^{2}\rightarrow \Lambda_{0}\quad\mbox{as}\quad n\rightarrow \infty.
\end{align}

Finally, we show that $v_n\to 1$ in $L^2(\Omega_\lambda)$ as $n\to\infty.$  By Lemma~\ref{lem:lam0} we have that $\Lambda_{0}=1.$ By contradiction, assume that there is $\delta>0$ and $n_0\in\mathbb N$ such that $\int_{\Omega_{\lambda}}|v_{n}-1|^{2}\,dx\geq\delta>0$ for all $n\geq n_0.$ Then, using \eqref{eq:115},
\begin{align}\label{eq:116}
\int_{\Omega_{\lambda}}v_{n}\,dx\leq 1-\frac{\delta}{2}+o(1)\qquad \text{ as }n\to\infty.
\end{align}
Let $\alpha_{n}:=2(p_{n}-1),$ $\beta_{n}:=2-p_{n},$ $r_{n}:=\frac{2}{\alpha_{n}},$ $q_{n}:=\frac{1}{\beta_{n}}.$ Notice that $r_{n},q_{n}>1$ for all $n\in\mathbb{N}$, $\tfrac{1}{r_{n}}+\tfrac{1}{q_{n}}=1$ and $\alpha_{n}+\beta_{n}=p_{n}$. Then, by Young's inequality,
\begin{align}\label{yi}
1=|v_{n}|_{p_{{n}}}^{p_{{n}}}=\int_{\Omega_{\lambda}}v_{n}^{\alpha_{n}}v_{n}^{\beta_{n}}\, dx\leq (p_{n}-1)|v_{n}|_{2}^{2}+(2-p_{n})|v_{n}|_{1}.
\end{align}
Then by \eqref{eq:115}, \eqref{eq:116}, \eqref{yi},
\begin{align*}
1&\leq(p_{n}-1)\left(1+o(1)\right)+(2-p_{n})\left(1+o(1)-\frac{\delta}{2}\right)\\
&=(p-1+o(1))\left(1+o(1)\right)+(2-p+o(1))\left(1+o(1)-\frac{\delta}{2}\right)
=1-\frac{2-p}{2}\delta+o(1)
\end{align*}
as $n\to\infty$ and the contradiction follows.
\end{proof}

We are ready to show Theorem~\ref{main:power}.

\begin{proof}[Proof of Theorem~\ref{main:power}]
Let $u_n\in \cH^{s_n}_0(\Omega)$ be the positive least-energy solution of \eqref{eq:peq} and let $w_{n}(x):=\lambda^{-\frac{2s_{n}}{2-p_{n}}}u_{n}(\lambda x)$.  Then $w_n$ is a positive least-energy solution of 
\begin{align}\label{wn:eq}
(-\Delta)^{s_{n}}w_{n}=|w_{n}|^{p_{n}-2}w_{n},\qquad w_{n}\in\mathcal{H}_{0}^{s_{n}}(\Omega_{\lambda}),
\end{align}
$\Omega_\lambda=\frac{\Omega}{|\Omega|}$, and $\|w_{n}\|_{s_{n}}
=\lambda^{-\frac{2s_{n}}{2-p_{n}}}\lambda^{\frac{2s_{n}-N}{2}}\|u_{n}\|_{s_{n}}=\lambda^{\frac{p_{n}N-2p_{n}s_{n}-2N}{2(2-p_{n})}}\|u_{n}\|_{s_{n}}.$ 
Passing to a subsequence, let $v_n$ be the minimizers of $\Lambda_n$ given in Proposition~\ref{prop:Lamn}. By uniqueness of positive solutions of sublinear problems (see \emph{e.g.} \cite[Theorem 6.1]{BFMST18}), the equations \eqref{vn:eq} and \eqref{wn:eq} imply that $w_{n}=\Lambda_{n}^{\frac{1}{p_{n}-2}}v_{n}.$
Then, by Proposition~\ref{prop:Lamn} and Lemma~\ref{lem:lam0}, $\lambda^{-\frac{2s_{n}}{2-p_{n}}}u_{n}(\lambda x)= w_{n}\rightarrow 1$ in  $L^{2}(\Omega_{\lambda})$ as $n\to\infty$. Since $\lim_{n\to\infty}p_n\in(1,2)$, we conclude that $u_{n}\rightarrow 1$ in $L^{2}(\Omega)$ as $n\rightarrow\infty$, as claimed.  The convergence in $L^q(\Omega)$ for $1\leq q<\infty$ now follows from Proposition~\ref{prop1} and the dominated convergence theorem.  Note that the limit $1$ is independent of the chosen subsequence of $(u_n)_{n\in\N}$, therefore the whole sequence $(u_n)_{n\in\N}$ must also converge to $1$ in $L^q(\Omega)$ for $1\leq q<\infty$.  This ends the proof.
\end{proof}

\begin{remark}\label{rmk:r:2}
One could also phrase the statement of Theorem \ref{main:power} as follows: Let $\Omega\subset \R^N$ be an open bounded Lipschitz set, $h:(0,1)\to (0,1)$ be a function such that $h(s)\to p$ as $s\to 0^+$ for some $p\in[0,1)$ and, for $s\in(0,1)$, let $u_s$ be the unique positive solution of
\begin{align*}
    (-\Delta)^{s} u_s = u_s^{h(s)}\quad \text{ in }\Omega,\qquad u_s=0\quad \text{ on }\R^N\backslash \Omega.
\end{align*}
Then $u_{s}\rightarrow 1$ in $L^{q}(\mathbb{R}^{N})$ as $s\rightarrow 0^+$ for all $1\leq q<\infty$.
\end{remark}

\section{Other sublinear-type problems}\label{gen:sec}

Recall that $\Omega\subset\mathbb{R}^{N}$ is an open bounded Lipschitz set. In this section, $(s_{n})_{n\in\N}$ is a sequence in $(0,1)$ such that $\lim_{n\to\infty}s_n=0$. Let $\Omega\subset\mathbb{R}^{N}$ be a bounded open set with Lipschitz boundary, and let
\begin{align}\label{eq:ac00}
\varepsilon>0,\qquad A>0,\qquad r>2.
\end{align}
Define 
\begin{align}\label{Lndef}
 L_{n}(u):=\tfrac{1}{2}\|u\|_{s_n}^2+\tfrac{A}{r}|u|^{r}_{r},\qquad \Sigma_{n}:=\left\lbrace v\in\mathcal{H}_{0}^{s}(\Omega)\cap L^{r}(\Omega)\::\:|\Omega|^{-1}\varepsilon|u|_{2}^{2}=1\right\rbrace,
\end{align}
and consider the following variational problem
\begin{align}\label{thnn}
    \Theta_n:=\inf\left\{L_n(u)\::\: u\in \Sigma_{n}\right\}.
\end{align}
Using the compact embedding $\mathcal{H}_{0}^{s}(\Omega)\hookrightarrow L^{2}(\Omega)$ and standard arguments, it follows that the infimum $\Theta_n$ is achieved at a non-trivial function $v_{n}\in \Sigma_n$ which does not change sign (since $\mathcal{E}_{s}(|v_{n}|,|v_{n}|)\leq\mathcal{E}_{s}(v_{n},v_{n})$).  Throughout this section we assume that
\begin{align}\label{vn}
v_{n}\in\Sigma_{n} \text{ is a non-negative function such that }\Theta_{n}=L_{n}(v_{n}).
\end{align}

\subsection{Auxiliary nonlinear eigenvalue problems}

Let $\eps>0,$ $A>0$, $r>2$, define $G(u):=|\Omega|^{-1}\varepsilon\int_{\Omega}|u|^{2}\, dx$ and 
\begin{align}\label{J00def}
 J(u):=\tfrac{1}{2}|u|_{2}^2+\tfrac{A}{r}|u|^{r}_{r},\qquad 
 \Sigma_{0}:=\left\lbrace v\in L^2(\R^N)\cap L^{r}(\R^N)\::\:u=0\text{ in }\R^N\backslash \Omega,\ G(u)=1\right\rbrace,
\end{align}

\begin{theorem}
\label{infimum}
Let $\Omega\subset\mathbb{R}^{N}$ be an open bounded Lipschitz set. Let $\Theta_{0}:=\inf\left\lbrace J(u)\::\: u\in \Sigma_0\right\rbrace.$ Then, $\Theta_{0}=\frac{|\Omega|}{2\varepsilon}+\frac{A|\Omega|}{r\varepsilon^{r/2}}.$
\end{theorem}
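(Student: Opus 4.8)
The plan is to exploit the fact that the normalization constraint defining $\Sigma_0$ completely fixes the $L^2$-norm of any admissible function, so that only the $L^r$-term in $J$ is genuinely free; the lower bound for that term then comes from Hölder's inequality applied on the finite-measure set $\Omega$, and the bound is saturated by a multiple of $\chi_\Omega$.

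First I would note that $u\in\Sigma_0$ forces $\varepsilon|\Omega|^{-1}|u|_2^2=1$, i.e. $|u|_2^2=|\Omega|/\varepsilon$; hence $\tfrac12|u|_2^2=\tfrac{|\Omega|}{2\varepsilon}$ for \emph{every} $u\in\Sigma_0$, and consequently $\Theta_0=\tfrac{|\Omega|}{2\varepsilon}+\tfrac{A}{r}\inf\{|u|_r^r:u\in\Sigma_0\}$. So it remains only to determine the infimum of $|u|_r^r$ over functions supported in $\Omega$ with $|u|_2^2=|\Omega|/\varepsilon$.

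Next, for $u$ vanishing outside $\Omega$ I would apply Hölder's inequality on $\Omega$ with conjugate exponents $r/2$ and $r/(r-2)$ (this is where $r>2$ enters) to obtain $|u|_2^2\le |u|_r^2\,|\Omega|^{1-2/r}$, equivalently $|u|_r^r\ge |u|_2^r\,|\Omega|^{1-r/2}$. Substituting $|u|_2^2=|\Omega|/\varepsilon$ yields $|u|_r^r\ge (|\Omega|/\varepsilon)^{r/2}|\Omega|^{1-r/2}=|\Omega|\,\varepsilon^{-r/2}$ for all $u\in\Sigma_0$, and therefore $\Theta_0\ge \tfrac{|\Omega|}{2\varepsilon}+\tfrac{A|\Omega|}{r\varepsilon^{r/2}}$.

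Finally, I would exhibit the minimizer explicitly: the constant $u_0:=\varepsilon^{-1/2}\chi_\Omega$ lies in $L^2(\mathbb R^N)\cap L^r(\mathbb R^N)$, vanishes outside $\Omega$, and satisfies $G(u_0)=\varepsilon|\Omega|^{-1}\cdot\varepsilon^{-1}|\Omega|=1$, so $u_0\in\Sigma_0$; moreover $|u_0|_2^2=|\Omega|/\varepsilon$ and $|u_0|_r^r=|\Omega|\,\varepsilon^{-r/2}$, giving $J(u_0)=\tfrac{|\Omega|}{2\varepsilon}+\tfrac{A|\Omega|}{r\varepsilon^{r/2}}$, which matches the lower bound. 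This proves the asserted value of $\Theta_0$ and, via the equality case in Hölder's inequality, identifies $u_0$ as the unique (up to sign) minimizer. I do not anticipate a real obstacle here; the only point requiring care is using Hölder's inequality in the correct direction and bookkeeping the powers of $|\Omega|$ and $\varepsilon$.
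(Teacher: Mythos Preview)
Your proof is correct and essentially identical to the paper's: both test with $\varepsilon^{-1/2}\chi_\Omega$ for the upper bound and use H\"older's inequality on $\Omega$ to bound $|u|_r^r$ from below, obtaining exactly the same estimate $|u|_r^r\ge |\Omega|\varepsilon^{-r/2}$ on $\Sigma_0$. Your closing uniqueness remark (via the equality case in H\"older) is a small bonus beyond the stated theorem; the paper establishes that separately, by a Lagrange-multiplier argument, in the subsequent result.
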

\begin{proof}
Since $\varepsilon^{-1/2}\chi_{\Omega}\in \Sigma_0$, we have that $\Theta_{0}\leq\frac{|\chi_{\Omega}|_{2}^{2}}{2\varepsilon}+\frac{A|\chi_{\Omega}|_{r}^{r}}{r\varepsilon^{r/2}}=\frac{|\Omega|}{2\varepsilon}+\frac{A|\Omega|}{r\varepsilon^{r/2}}.$ On the other hand, for every $u\in L^{r}(\Omega)$ such that $\tfrac{\varepsilon|u|_{2}^{2}}{|\Omega|}=1$, H\"older's inequality yields that $\frac{|\Omega|}{\varepsilon^{r/2}}
\leq|u|_{r}^{r}.$ Then, by \eqref{eq:ac00}, $\frac{|\Omega|}{2\varepsilon}+\frac{A|\Omega|}{r\varepsilon^{r/2}}\leq\frac{|u|_{2}^{2}}{2}+\frac{A|u|_{r}^{r}}{r},$ holds for all $u\in \Sigma_0$. This proves the result.
\end{proof}

\begin{theorem}
\label{acaprox}
Let $\Omega\subset\mathbb{R}^{N}$ be an open bounded Lipschitz set. Then 
\begin{align}
\Theta_{n}\rightarrow\Theta_{0}\quad\mbox{as}~n\rightarrow\infty\label{eq:ac1}
\end{align}
and $(v_{n})_{n\in\N}$ is a minimizing sequence for $\Theta_{0}$, that is 
\begin{align}
\frac{|v_{n}|_{2}^{2}}{2}+\frac{A|v_{n}|_{r}^{r}}{r}\rightarrow\Theta_{0}\quad\mbox{as}~n\rightarrow\infty.\label{eq:ac2}
\end{align}
\end{theorem}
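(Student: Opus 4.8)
The plan is to prove the two one-sided bounds $\limsup_{n\to\infty}\Theta_n\le\Theta_0$ and $\liminf_{n\to\infty}\Theta_n\ge\Theta_0$ separately, and then to read off \eqref{eq:ac2} from the lower-bound argument combined with the identity $|v_n|_2^2=|\Omega|/\varepsilon$ that is forced by the constraint $v_n\in\Sigma_n$.

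For the upper bound I would argue by comparison with smooth test functions. Given $\varphi\in\mathcal{C}_{c}^{\infty}(\Omega)\setminus\{0\}$, set $t_\varphi:=(|\Omega|/(\varepsilon|\varphi|_2^2))^{1/2}$, so that $t_\varphi\varphi\in\Sigma_n$ for every $n$ (and also $t_\varphi\varphi\in\Sigma_0$). Testing \eqref{thnn} against $t_\varphi\varphi$ and using that $\|t_\varphi\varphi\|_{s_n}^2\to|t_\varphi\varphi|_2^2$ as $n\to\infty$ (see \cite[Corollary 3]{BBM01}, exactly as in the proof of Lemma~\ref{haprox}), one gets $\limsup_{n\to\infty}\Theta_n\le\tfrac12|t_\varphi\varphi|_2^2+\tfrac{A}{r}|t_\varphi\varphi|_r^r=J(t_\varphi\varphi)$. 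Taking the infimum over $\varphi$, the resulting bound is $\ge\Theta_0$ because every $t_\varphi\varphi$ lies in $\Sigma_0$, while it is $\le\Theta_0$ by choosing $\varphi_j\in\mathcal{C}_{c}^{\infty}(\Omega)$ with $\varphi_j\to\varepsilon^{-1/2}\chi_\Omega$ in $L^2(\Omega)\cap L^r(\Omega)$ (possible since $\Omega$ is bounded): then $t_{\varphi_j}\to1$, hence $t_{\varphi_j}\varphi_j\to\varepsilon^{-1/2}\chi_\Omega$ in $L^2(\Omega)\cap L^r(\Omega)$ and $J(t_{\varphi_j}\varphi_j)\to J(\varepsilon^{-1/2}\chi_\Omega)=\Theta_0$ by Theorem~\ref{infimum}. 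Thus $\limsup_{n\to\infty}\Theta_n\le\Theta_0$.

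For the lower bound I would only use the constraint and two elementary inequalities. The constraint gives $|v_n|_2^2=|\Omega|/\varepsilon$; the fractional Poincaré inequality \eqref{eq:poincin} gives $\|v_n\|_{s_n}^2\ge\lambda_{1,s_n}|v_n|_2^2$; and Hölder's inequality on the bounded set $\Omega$ (exactly as in the proof of Theorem~\ref{infimum}) gives $|v_n|_r^r\ge|v_n|_2^r|\Omega|^{1-r/2}=|\Omega|\varepsilon^{-r/2}$. Combining these,
\[
\Theta_n=L_n(v_n)\ge\frac{\lambda_{1,s_n}}{2}\cdot\frac{|\Omega|}{\varepsilon}+\frac{A}{r}\cdot\frac{|\Omega|}{\varepsilon^{r/2}},
\]
and since $\lambda_{1,s_n}\to1$ by \eqref{asymp}, this forces $\liminf_{n\to\infty}\Theta_n\ge\tfrac{|\Omega|}{2\varepsilon}+\tfrac{A|\Omega|}{r\varepsilon^{r/2}}=\Theta_0$. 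Together with the upper bound, this proves \eqref{eq:ac1}.

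Finally, for \eqref{eq:ac2}: the constraint makes $\tfrac12|v_n|_2^2=\tfrac{|\Omega|}{2\varepsilon}$ constant in $n$, so it suffices to show $|v_n|_r^r\to|\Omega|\varepsilon^{-r/2}$. The Hölder estimate above is the lower bound; the matching upper bound follows from $\tfrac{A}{r}|v_n|_r^r=\Theta_n-\tfrac12\|v_n\|_{s_n}^2\le\Theta_n-\tfrac{\lambda_{1,s_n}}{2}\cdot\tfrac{|\Omega|}{\varepsilon}$, letting $n\to\infty$ and using \eqref{eq:ac1} and $\lambda_{1,s_n}\to1$, which gives $\limsup_{n\to\infty}\tfrac{A}{r}|v_n|_r^r\le\Theta_0-\tfrac{|\Omega|}{2\varepsilon}=\tfrac{A|\Omega|}{r\varepsilon^{r/2}}$. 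Hence $\tfrac12|v_n|_2^2+\tfrac{A}{r}|v_n|_r^r\to\Theta_0$. The argument is essentially routine; the only delicate point is the upper bound, where the natural candidate minimizer $\varepsilon^{-1/2}\chi_\Omega$ is not admissible for $\Theta_n$ (it does not belong to $\mathcal{H}_{0}^{s_n}(\Omega)$) and must be reached through smooth competitors rescaled to restore the constraint, with the convergence $\|\cdot\|_{s_n}^2\to|\cdot|_2^2$ invoked only for the fixed test functions and never for $v_n$, for which no such control is available.
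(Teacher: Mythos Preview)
Your argument is correct and follows essentially the same route as the paper: an upper bound on $\Theta_n$ via rescaled smooth test functions together with $\|\cdot\|_{s_n}^2\to|\cdot|_2^2$ and density, and a lower bound via the Poincar\'e inequality \eqref{eq:poincin} and $\lambda_{1,s_n}\to 1$. The only cosmetic difference is in the lower bound: the paper observes directly that $v_n\in\Sigma_0$ and hence $\Theta_0\le J(v_n)\le (\lambda_{1,s_n})^{-1}\tfrac12\|v_n\|_{s_n}^2+\tfrac{A}{r}|v_n|_r^r=\Theta_n+o(1)$, which immediately gives both \eqref{eq:ac1} and \eqref{eq:ac2}, whereas you unpack this into the two separate estimates $\|v_n\|_{s_n}^2\ge\lambda_{1,s_n}|\Omega|/\varepsilon$ and $|v_n|_r^r\ge|\Omega|\varepsilon^{-r/2}$ and then reassemble; the content is the same.
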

\begin{proof}
Let $\varphi\in\mathcal{C}_{c}^{\infty}(\Omega)\setminus\left\lbrace 0 \right\rbrace$ and set $\phi:=\left(\tfrac{|\Omega|}{\varepsilon}\right)^{1/2}\tfrac{\varphi}{|\varphi|_{2}}$ so that $|\phi|_{2}^{2}=\tfrac{|\Omega|}{\varepsilon}$. Then, 
\begin{align*}
\Theta_{n}&=\frac{\|v_{n}\|_{s_{n}}^{2}}{2}+\frac{A|v_{n}|^{r}_{r}}{r}\leq\frac{\|\phi\|_{s_{n}}^{2}}{2}+\frac{A|\phi|^{r}_{r}}{r}=\frac{|\phi|_{2}^{2}}{2}+\frac{A|\phi|^{r}_{r}}{r}+o(1)
=\frac{|\Omega|}{2\varepsilon}+\frac{A|\phi|^{r}_{r}}{r}+o(1)
\end{align*} 
as $n\rightarrow\infty$, where $(s_n)_{n\in\N}\subset (0,1)$ is the sequence associated to $\Theta_n$. Then, up to a subsequence,
$\Theta_{n}=\frac{\|v_{n}\|_{s_{n}}^{2}}{2}+\frac{A|v_{n}|^{r}_{r}}{r}\rightarrow\Theta_{0}^{\ast}$ as $n\rightarrow\infty$ for some $\Theta_{0}^{\ast}\geq 0$. In particular, it holds that
\begin{align*}
\Theta_{0}^{\ast}\leq\frac{|\Omega|}{2\varepsilon}+\frac{A}{r}\left(\frac{|\Omega|}{\varepsilon}\right)^{r/2}\frac{|\varphi|_{r}^{r}}{|\varphi|_{2}^{r}} \qquad \text{ for all }\varphi\in\mathcal{C}_{c}^{\infty}(\Omega)\setminus\left\lbrace 0\right\rbrace.
\end{align*}
Using the definition of $\Theta_0$ and a density argument, it follows that
\begin{align}
\Theta_{0}^{\ast}\leq\Theta_{0}.\label{eq:ac3}
\end{align}
On the other hand, using that $v_{n}\in L^{r}(\Omega)$ and $|v_{n}|_{2}^{2}=|\Omega|\varepsilon^{-1}$ for all $n\in\mathbb{N}$, together with \eqref{eq:poincin},
\begin{align}
\Theta_{0}\leq\frac{|v_{n}|_{2}^{2}}{2}+\frac{A|v_{n}|^{r}_{r}}{r}\leq\frac{(\lambda_{1,s_n})^{-1}\|v_{n}\|_{s_{n}}^{2}}{2}+\frac{A|v_{n}|^{r}_{r}}{r},\label{eq:ac4}
\end{align}
implying that $\Theta_{0}\leq\Theta_{n}+o(1)=\Theta_{0}^{\ast}+o(1)$ as $n\rightarrow\infty.$ This inequality combined with \eqref{eq:ac3} yields \eqref{eq:ac1}. Then, by \eqref{eq:ac4}, $\Theta_{0}\leq\frac{|v_{n}|_{2}^{2}}{2}+\frac{A|v_{n}|^{r}_{r}}{r}
=\Theta_n+o(1)
=\Theta_{0}+o(1)$ as $n\rightarrow\infty,$ which proves \eqref{eq:ac2}.
\end{proof}

The following result characterizes the minimizer of $\Theta_0$.

\begin{theorem}
\label{acmin}
Let $J$, $\Sigma_0$, and $G$ be as in \eqref{J00def}. If $u\in \Sigma_0$ is a minimizer for $\Theta_0$, then  $|u|=\varepsilon^{-1/2}$ a.e. in $\Omega$.
\end{theorem}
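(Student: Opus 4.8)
The plan is to exploit the fact that on the constraint set $\Sigma_0$ the quadratic part of $J$ is frozen, so that being a minimizer of $J$ is equivalent to realizing equality in the Hölder inequality already used in the proof of Theorem~\ref{infimum}; its equality case then forces $|u|$ to be constant, and the constraint fixes the constant.

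First I would rewrite the constraint: $G(u)=1$ means exactly $|u|_2^2=|\Omega|/\varepsilon$, so for \emph{every} $u\in\Sigma_0$ the term $\tfrac12|u|_2^2=\tfrac{|\Omega|}{2\varepsilon}$ is a fixed number. Hence, by Theorem~\ref{infimum}, for a minimizer $u\in\Sigma_0$ the equality $J(u)=\Theta_0=\tfrac{|\Omega|}{2\varepsilon}+\tfrac{A|\Omega|}{r\varepsilon^{r/2}}$ is equivalent to
\[
|u|_r^r=|\Omega|\,\varepsilon^{-r/2}.
\]
Next I would record the reverse bound: since $u=0$ in $\R^N\setminus\Omega$ and $u\in L^r(\Omega)$, Hölder's inequality applied to $|u|^2$ and $1$ on $\Omega$ with exponents $\tfrac r2$ and $\tfrac{r}{r-2}$ gives $|u|_2^2=\int_\Omega|u|^2\,dx\le |u|_r^2\,|\Omega|^{(r-2)/r}$, so that using $|u|_2^2=|\Omega|/\varepsilon$ we get $|u|_r^r\ge (|u|_2^2)^{r/2}|\Omega|^{-(r-2)/2}=|\Omega|\,\varepsilon^{-r/2}$, with equality if and only if equality holds in this Hölder inequality.

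Combining the two displays, a minimizer $u$ must realize equality in the above Hölder inequality. Since $|\Omega|<\infty$ and the constraint forces $|u|_2^2=|\Omega|/\varepsilon>0$ (so $u\not\equiv0$), the equality case of Hölder's inequality says that $|u|^r$ must coincide a.e.\ on $\Omega$ with a (positive) constant; hence $|u|\equiv c$ a.e.\ in $\Omega$ for some $c>0$. Plugging this into the constraint yields $c^2|\Omega|=|\Omega|/\varepsilon$, i.e.\ $c=\varepsilon^{-1/2}$, which is the claim.

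The argument is essentially routine; the only point requiring a little care is the equality case of Hölder's inequality, which should be invoked in the precise form stating that $\int_\Omega|u|^2\cdot 1\,dx=\||u|^2\|_{L^{r/2}(\Omega)}\,\|1\|_{L^{r/(r-2)}(\Omega)}$ holds if and only if $|u|^r$ and the constant function $1$ are linearly dependent on $\Omega$ — equivalently $|u|$ is a.e.\ constant on $\Omega$.
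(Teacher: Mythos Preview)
Your proof is correct and takes a genuinely different, more elementary route than the paper. The paper argues via the Lagrange multiplier rule: from the Euler--Lagrange equation $u+A|u|^{r-2}u-2\lambda_M|\Omega|^{-1}\varepsilon u=0$ it deduces that $|u|=K_0\chi_{V_0}$ for some constant $K_0>0$ and some set $V_0=\{u\neq 0\}\subset\Omega$, computes $K_0$ from the constraint, and then rules out $|V_0|<|\Omega|$ by a strict-inequality energy comparison. Your argument avoids the multiplier step entirely: you observe that the constraint freezes $\tfrac12|u|_2^2$, so minimality of $J$ is equivalent to $|u|_r^r=|\Omega|\varepsilon^{-r/2}$, which is exactly the equality case of the H\"older inequality $|u|_2^2\le|u|_r^2|\Omega|^{(r-2)/r}$ already used in Theorem~\ref{infimum}; that equality forces $|u|$ to be a.e.\ constant on $\Omega$, and the constraint pins down the constant. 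Your approach is shorter and sidesteps both the multiplier machinery and the contradiction argument; the paper's approach, on the other hand, yields as a by-product the structural fact that any constrained critical point (not just a minimizer) must be of the form $K_0\chi_{V_0}$, which is not visible from the H\"older equality argument.
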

\begin{proof}
Clearly, both $J$ and $G$ are differentiable on $L^{r}(\Omega)$.  Assume that $u\in \Sigma_0$ is a minimizer for $\Theta_0$. Since $u\neq 0$, there is a test function $\varphi_{u}\in\mathcal{C}_{c}^{\infty}(\Omega)$ such that $D_{\varphi_{u}}G(u)=2|\Omega|^{-1}\varepsilon\int_{\Omega}u\varphi_{u} dx\neq 0,$ where $D_{\varphi_{u}}G(u)$ is the Gâteaux derivative of $G$ at $u$ in the direction $\varphi_u$.

Then, by the Lagrange multiplier Theorem (see, for example, \cite[Chap. 2, Sec. 1, Theorem~1]{giaquinta}), there is a real number $\lambda_{M}$ such that the equation $D_{\varphi}J(u)-\lambda_{M}D_{\varphi}G(u)=0$ holds for all $\varphi\in\mathcal{C}_{c}^{\infty}(\Omega)$, that is,
\begin{align*}
\int_{\Omega}\left(u+A|u|^{r-2}u-2\lambda_{M}|\Omega|^{-1}\varepsilon u\right)\varphi \, dx=0\quad \text{ for all }~\varphi\in\mathcal{C}_{c}^{\infty}(\Omega).
\end{align*}
In consequence, $u$ satisfies that $u+A|u|^{r-2}u-2\lambda_{M}|\Omega|^{-1}\varepsilon u=0$ a.e. in $\Omega.$
If $x_{1}\in\Omega$ is such that $u(x_{1})\neq 0$ then, $A|u(x_{1})|^{r-2}=2\lambda_{M}|\Omega|^{-1}\varepsilon-1.$ Therefore,
\begin{align}
|u|=K_{0}\chi_{V_{0}},\qquad V_0:=\{x\in \Omega\::\: u\neq 0\}\label{eq:ac9}
\end{align}
for some constant $K_{0}>0$. Since $u$ must satisfy that $G(u)=1$, it follows that 
\begin{align}
\label{eq:ac91}
K_{0}=\left(\frac{|\Omega|}{\varepsilon|V_{0}|}\right)^{1/2}, 
\end{align}
and in particular, $|u|_{r}^{r}=\frac{|\Omega|^{r/2}}{\varepsilon^{r/2}|V_{0}|^{(r-2)/2}}.$ Now, let us assume that $|V_{0}|<|\Omega|$. Given that $u$ is a minimizer, \eqref{eq:ac9}  combined with \eqref{eq:ac00} and Theorem~\ref{infimum} imply that
\begin{align*}
\Theta_{0}&=\frac{|u|_{2}^{2}}{2}+\frac{A|u|^{r}_{r}}{r}=|V_{0}|\left(\frac{1}{2\varepsilon}\frac{|\Omega|}{|V_{0}|}+\frac{A}{r\varepsilon^{r/2}}\frac{|\Omega|^{r/2}}{|V_{0}|^{r/2}}\right)\\
&>|V_{0}|\left(\frac{1}{2\varepsilon}\frac{|\Omega|}{|V_{0}|}+\frac{A}{r\varepsilon^{r/2}}\frac{|\Omega|}{|V_{0}|}\right)=\frac{|\Omega|}{2\varepsilon}+\frac{A}{r\varepsilon^{r/2}}|\Omega|=\Theta_{0},
\end{align*}
a contradiction.  Therefore, $|V_{0}|=|\Omega|$. This implies that $|\Omega\setminus V_{0}|=0$, which leads us to conclude that $\chi_{V_{0}}=\chi_{\Omega}$ a.e. in $\Omega$, and by \eqref{eq:ac91} that $K_{0}=\varepsilon^{-1/2}$. The result now follows from \eqref{eq:ac9}.
\end{proof}

Recall that $\lambda_{1,s}=\lambda_{1,s}(\Omega)>0$ denotes the first Dirichlet eigenvalue of the fractional Laplacian $(-\Delta)^s$ in a domain $\Omega$ (see \eqref{fef}).
\begin{proposition}
\label{infbound}
Let $\eps>0,$ $A>0,$ $r>2,$ and $\eta>\lambda_{1,s}(\Omega)$. There is a positive weak solution $u\in\mathcal{H}_{0}^{s}(\Omega)\cap L^{r}(\Omega)$ of the equation $(-\Delta)^{s}u+Au^{r-1}=\eta u$ in $\Omega,$ that is,
\begin{align}
\label{eq:acweak}
\mathcal{E}_{s}(u,\phi)+A\int_{\Omega}u^{r-1}\phi dx-\eta\int_{\Omega}u\phi dx=0 \qquad \text{ for all }\phi\in\mathcal{C}_{c}^{\infty}(\Omega).
\end{align}
Moreover, $u\leq\left(\frac{\eta}{A}\right)^{\tfrac{1}{r-2}}$ a.e. in $\R^N$.
\end{proposition}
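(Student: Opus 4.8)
The plan is to obtain $u$ by the direct method of the calculus of variations, minimizing the energy functional
\begin{align*}
E(u):=\tfrac12\|u\|_{s}^2+\tfrac{A}{r}|u|_{r}^r-\tfrac{\eta}{2}|u|_{2}^2
\end{align*}
over the reflexive Banach space $X:=\mathcal{H}_{0}^{s}(\Omega)\cap L^{r}(\Omega)$ with norm $u\mapsto\|u\|_{s}+|u|_{r}$ (here $\mathcal{H}_{0}^{s}(\Omega)$ is a Hilbert space, $L^{r}(\Omega)$ is reflexive since $r>2$, and hence $X$ is reflexive). First I would check that $E$ is coercive on $X$: by H\"older's inequality $|u|_{2}^2\le|\Omega|^{1-2/r}|u|_{r}^2$, and since $r>2$, Young's inequality absorbs $\tfrac{\eta}{2}|u|_{2}^2$ into $\tfrac{A}{2r}|u|_{r}^r$ up to an additive constant, so $E(u)\ge\tfrac12\|u\|_{s}^2+\tfrac{A}{2r}|u|_{r}^r-C$; in particular $E$ is bounded below. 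Next, $E$ is sequentially weakly lower semicontinuous on $X$: the quadratic form $\|\cdot\|_{s}^2$ and the convex continuous functional $|\cdot|_{r}^r$ are weakly lower semicontinuous, while $u\mapsto-\tfrac{\eta}{2}|u|_{2}^2$ is weakly continuous because of the compact embedding $\mathcal{H}_{0}^{s}(\Omega)\hookrightarrow L^{2}(\Omega)$. A minimizing sequence for $E$ is then bounded in $X$ by coercivity, hence, after passing to a subsequence, converges weakly in $X$ to some $u$, which is a minimizer.

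To see that $u\not\equiv0$, I would evaluate $E$ along $t\varphi_{s}$ for $t>0$, where $\varphi_{s}$ is the first Dirichlet eigenfunction of $(-\Delta)^{s}$ from \eqref{fef}, which lies in $L^\infty(\Omega)\subset L^{r}(\Omega)$ by standard regularity: using $\|\varphi_{s}\|_{s}^2=\lambda_{1,s}$ and $|\varphi_{s}|_{2}=1$ one gets $E(t\varphi_{s})=\tfrac{t^2}{2}(\lambda_{1,s}-\eta)+\tfrac{At^r}{r}|\varphi_{s}|_{r}^r$, which is negative for $t>0$ small since $\eta>\lambda_{1,s}(\Omega)$ and $r>2$. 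Hence $\inf_{X}E<0=E(0)$, so the minimizer is nontrivial. Moreover $\mathcal{E}_{s}(|u|,|u|)\le\mathcal{E}_{s}(u,u)$ while the remaining two terms of $E$ depend only on $|u|$, so $|u|$ is also a minimizer and we may assume $u\ge0$, $u\not\equiv0$.

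Since $E$ is of class $C^1$ on $X$ — with $u\mapsto|u|_{r}^r$ differentiable on $L^{r}$ and derivative $\phi\mapsto r\int_{\Omega}|u|^{r-2}u\phi\,dx$ — the minimizer $u$ is a critical point of $E$, which is exactly \eqref{eq:acweak} for $\phi\in\mathcal{C}_{c}^{\infty}(\Omega)$; a density argument, using that $u^{r-1}\in L^{r'}(\Omega)$ and $u\in L^{2}(\Omega)$, extends \eqref{eq:acweak} to all $\phi\in X$. For the $L^\infty$ bound, set $M:=(\eta/A)^{1/(r-2)}$, so that $AM^{r-2}=\eta$, and test \eqref{eq:acweak} with $w:=(u-M)^+\in X$ (which vanishes on $\mathbb{R}^{N}\setminus\Omega$). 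The elementary pointwise inequality $(u(x)-u(y))(w(x)-w(y))\ge(w(x)-w(y))^2\ge0$ gives $\mathcal{E}_{s}(u,w)\ge0$, so
\begin{align*}
0=\mathcal{E}_{s}(u,w)+\int_{\Omega}\bigl(Au^{r-1}-\eta u\bigr)w\,dx\ge\int_{\{u>M\}}Au\,(u^{r-2}-M^{r-2})(u-M)\,dx\ge0,
\end{align*}
and since the last integrand is strictly positive on $\{u>M\}$, we get $|\{u>M\}|=0$, i.e.\ $u\le(\eta/A)^{1/(r-2)}$ a.e.\ in $\mathbb{R}^{N}$. Finally, with $u\in L^\infty(\Omega)$ the equation reads $(-\Delta)^{s}u+c(x)u=0$ in $\Omega$ with $c:=Au^{r-2}-\eta\in L^\infty(\Omega)$; rewriting it as $(-\Delta)^{s}u+(c+\|c\|_{\infty})u=\|c\|_{\infty}u\ge0$ with $c+\|c\|_{\infty}\ge0$, the strong maximum principle for the fractional Laplacian together with $u\ge0$, $u\not\equiv0$ yields $u>0$ in $\Omega$.

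I expect the technically most delicate point to be the bookkeeping needed to ensure that all three functionals are well defined and of class $C^1$ on the intersection space $X$ — in particular, that a bounded minimizing sequence is relatively weakly compact in $X$ and not merely in $\mathcal{H}_{0}^{s}(\Omega)$ — together with the density step that upgrades \eqref{eq:acweak} from $\mathcal{C}_{c}^{\infty}(\Omega)$ to $X$, so that $w=(u-M)^+$ becomes an admissible test function. Once these are in place, the coercivity, nontriviality and positivity arguments are routine; among them I would single out the truncation estimate for the $L^\infty$ bound as the one deserving the most care, since it combines the sign of the nonlinearity on $\{u>M\}$ with the convexity-type inequality for $\mathcal{E}_{s}(u,(u-M)^+)$.
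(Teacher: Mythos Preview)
Your proposal is correct and follows essentially the same approach as the paper: existence is obtained by global minimization of the same energy functional (the paper simply cites \cite[Corollary~6.3]{BFMST18} for this step, while you spell out the coercivity, weak lower semicontinuity, and nontriviality details), and the $L^\infty$ bound is proved by testing with the same truncation $(u-M)^+=(\eta_0-u)_-$ together with the identical pointwise inequality $(u(x)-u(y))(\phi(x)-\phi(y))\ge(\phi(x)-\phi(y))^2$. The only cosmetic difference is your positivity step: once $u\le M$ is known one has $(-\Delta)^{s}u=(\eta-Au^{r-2})u\ge0$ directly, so the shift by $\|c\|_\infty$ is unnecessary.
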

\begin{proof}
The existence of $u$ follows by global minimization and standard arguments (see, for example, \cite[Corollary 6.3]{BFMST18}). Let $\eta_{0}:=(\tfrac{\eta}{A})^\frac{1}{r-2}$ and $\phi:=(\eta_{0}-u)_-=-\min\{0,\eta_{0}-u\}\geq 0;$ then,
\begin{align*}
    u(\eta_{0}^{r-2}-u^{r-2})\phi
    =u(\eta_{0}^{r-2}-u^{r-2})\frac{\eta_{0}-u}{\eta_{0}-u}\phi
    =-u\phi^2\frac{\eta_{0}^{r-2}-u^{r-2}}{\eta_{0}-u}\leq 0,
\end{align*}
since $(\eta_{0}^{r-2}-u^{r-2})/(\eta_{0}-u)>0$. Moreover, $u(x)-\eta_{0}
=-(\eta_{0}-u(x))
=-(\eta_{0}-u(x))_++\phi(x),$ thus $u(x)-u(y)
=(u(x)-\eta_{0})-(u(y)-\eta_{0})
=(\eta_{0}-u(y))_+-(\eta_{0}-u(x))_+
+\phi(x)-\phi(y),$
and
\begin{align*}
(u(x)-u(y))&(\phi(x)-\phi(y))
=(\phi(x)-\phi(y))^2
+[(\eta_{0}-u(y))_+-(\eta_{0}-u(x))_+](\phi(x)-\phi(y))\\
&=(\phi(x)-\phi(y))^2
+(\eta_{0}-u(y))_+\phi(x)
+(\eta_{0}-u(x))_+\phi(y)
\geq (\phi(x)-\phi(y))^2;
\end{align*}
but then, by \eqref{eq:acweak}, $0={\mathcal E}_s(u,\phi)+A\int_\Omega u(x)(u^{r-2}(x)-\eta_{0}^{r-2})\phi(x)\, dx
\geq {\mathcal E}_s(\phi,\phi)\geq 0,$ which implies that $\phi\equiv 0$ and $u\leq\eta_{0}$ in $\Omega$. 
\end{proof}

\begin{lemma}
\label{nonoptimal}
Let $v_{n}$ be as in \eqref{vn}. Then, the sequence $(v_{n})_{n\in\N}$ is bounded in $L^{\infty}(\Omega)$.
\end{lemma}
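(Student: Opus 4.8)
The plan is to derive an Euler--Lagrange equation for the minimizer $v_n$, to bound the associated Lagrange multiplier uniformly in $n$ using the energy convergence from Theorem~\ref{acaprox}, and then to invoke the a priori $L^\infty$ estimate contained in the proof of Proposition~\ref{infbound}.

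First I would write down the Euler--Lagrange equation. Since $v_n\in\Sigma_n$ minimizes $\Theta_n$ and $v_n\not\equiv 0$, the constraint $G(u)=|\Omega|^{-1}\varepsilon|u|_2^2=1$ is nondegenerate at $v_n$ (there is $\varphi\in\mathcal{C}_c^\infty(\Omega)$ with $D_\varphi G(v_n)=2|\Omega|^{-1}\varepsilon\int_\Omega v_n\varphi\,dx\neq 0$), so the Lagrange multiplier theorem, applied exactly as in the proof of Theorem~\ref{acmin} (see \cite[Chap.~2, Sec.~1, Theorem~1]{giaquinta}), provides $\eta_n\in\mathbb R$ with
\[
\mathcal{E}_{s_n}(v_n,\phi)+A\int_\Omega v_n^{r-1}\phi\,dx=\eta_n\int_\Omega v_n\phi\,dx\qquad\text{for all }\phi\in\mathcal{C}_c^\infty(\Omega),
\]
and hence, by density, for all $\phi\in\mathcal{H}_0^{s_n}(\Omega)\cap L^r(\Omega)$. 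Testing with $\phi=v_n$ and using $|v_n|_2^2=|\Omega|/\varepsilon$ gives $\eta_n=\tfrac{\varepsilon}{|\Omega|}\big(\|v_n\|_{s_n}^2+A|v_n|_r^r\big)>0$.

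Next I would check that $(\eta_n)_{n\in\N}$ is bounded. Because $|v_n|_2^2=|\Omega|/\varepsilon$ is independent of $n$, the minimizing-sequence property \eqref{eq:ac2} in Theorem~\ref{acaprox} forces $\tfrac{A}{r}|v_n|_r^r\to\Theta_0-\tfrac{|\Omega|}{2\varepsilon}$, so $(|v_n|_r^r)_{n\in\N}$ is bounded; then from $\Theta_n=\tfrac12\|v_n\|_{s_n}^2+\tfrac{A}{r}|v_n|_r^r$ and $\Theta_n\to\Theta_0$ (see \eqref{eq:ac1}) the sequence $(\|v_n\|_{s_n}^2)_{n\in\N}$ is bounded as well. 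Consequently there is $M>0$, independent of $n$, with $0<\eta_n\leq M$ for all $n$.

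Finally, each $v_n\geq 0$ is a weak solution of $(-\Delta)^{s_n}v_n+Av_n^{r-1}=\eta_n v_n$ in $\Omega$ with $v_n=0$ in $\mathbb R^N\setminus\Omega$, so the comparison argument in the proof of Proposition~\ref{infbound} --- testing the equation against the admissible function $\big(v_n-(\eta_n/A)^{1/(r-2)}\big)_+\in\mathcal{H}_0^{s_n}(\Omega)\cap L^r(\Omega)$ --- applies without change and yields $v_n\leq(\eta_n/A)^{1/(r-2)}\leq(M/A)^{1/(r-2)}$ a.e. in $\mathbb R^N$, which is the claimed uniform bound. The only genuinely delicate point is the uniform control of the Lagrange multiplier $\eta_n$: it is essential that the constraint fixes $|v_n|_2^2$ while Theorem~\ref{acaprox} pins down the limiting energy $\Theta_0$, which together keep both $\|v_n\|_{s_n}^2$ and $|v_n|_r^r$ bounded as $n\to\infty$; everything else is routine.
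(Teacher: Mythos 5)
Your proposal is correct and follows essentially the same route as the paper: derive the Euler--Lagrange equation for the constrained minimizer $v_n$, observe that the multiplier $\eta_n=\tfrac{\varepsilon}{|\Omega|}\bigl(\|v_n\|_{s_n}^2+A|v_n|_r^r\bigr)$ (the paper's $2\lambda_n|\Omega|^{-1}\varepsilon$) is bounded because $\Theta_n\to\Theta_0$ and $|v_n|_r^r$ stays bounded by Theorem~\ref{acaprox}, and then apply the truncation/comparison bound $v_n\leq(\eta_n/A)^{1/(r-2)}$ from Proposition~\ref{infbound}. Your explicit remark that the comparison argument applies to $v_n$ as a weak solution (rather than only to the solution constructed in Proposition~\ref{infbound}) is a minor point of extra care, not a different method.
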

\begin{proof}
Since $v_{n}$ is a minimizer of $L_{n}$ (given in \eqref{Lndef}) under the restriction $G_{n}(u):=|\Omega|^{-1}\varepsilon|u|_{2}^{2}=1$, the Lagrange's multiplier theorem implies the existence of a real number $\lambda_{n}$ such that $v_{n}$ is a weak solution of $(-\Delta)^{s_n}v_n+Av_n^{r-1}=2\lambda_{n}|\Omega|^{-1}\varepsilon u$ in $\Omega.$ Moreover,
\begin{align}\label{ln}
\lambda_{n}=\frac{\|v_{n}\|_{s_{n}}^{2}+A|v_{n}|_{r}^{r}}{2}=\Theta_{n}+\left(\frac{r-2}{2r}\right)A|v_{n}|_{r}^{r},
\end{align}
where $\Theta_n$ is given in \eqref{thnn}. By Theorem~\ref{acaprox} it follows that $\lambda_{n}$ is bounded and, by Proposition~\ref{infbound}, $v_{n}\leq\left((2\lambda_{n}|\Omega|^{-1}\varepsilon)/A\right)^{\tfrac{1}{r-2}}$, which yields the result.
\end{proof}

\begin{theorem}
\label{acconvergence}
Let $v_{n}$ be as in \eqref{vn}. Then $v_{n}\rightarrow\varepsilon^{-1/2}$ in $L^{p}(\Omega)$ as $n\to\infty$ for every $1\leq p<\infty$.
\end{theorem}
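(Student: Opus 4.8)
The plan is to reduce the statement to $L^2$-convergence and then invoke the strict convexity of $t\mapsto t^{r/2}$, which is available exactly because $r>2$; the uniform $L^\infty$-bound of Lemma~\ref{nonoptimal} will play the role otherwise played by operator compactness, which is unavailable here since $s_n\to0$.

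First I would collect what is already at hand on $(v_n)_{n\in\N}$. Since $v_n\in\Sigma_n$, the constraint forces $|v_n|_2^2=|\Omega|\varepsilon^{-1}$ for every $n$. On the other hand, Theorems~\ref{acaprox} and \ref{infimum} give
\begin{align*}
\frac{|v_n|_2^2}{2}+\frac{A}{r}|v_n|_r^r\longrightarrow\Theta_0=\frac{|\Omega|}{2\varepsilon}+\frac{A|\Omega|}{r\varepsilon^{r/2}}\qquad\text{as }n\to\infty,
\end{align*}
and subtracting the (now constant) first term yields $|v_n|_r^r\to|\Omega|\varepsilon^{-r/2}$. Setting $g_n:=\varepsilon^{1/2}v_n\geq0$ and $h_n:=g_n^2\geq0$, and viewing $\Omega$ as a probability space with the renormalized measure $|\Omega|^{-1}\,dx$, this reads
\begin{align*}
\frac{1}{|\Omega|}\int_\Omega h_n\,dx=1\ \ \text{for all }n,\qquad \frac{1}{|\Omega|}\int_\Omega h_n^{r/2}\,dx\longrightarrow1 .
\end{align*}
By Lemma~\ref{nonoptimal} there is $M\geq1$ with $0\leq h_n\leq M$ a.e. in $\Omega$ for all $n$. (The candidate limit $\varepsilon^{-1/2}$ is precisely the minimizer of $\Theta_0$ identified in Theorem~\ref{acmin}.)

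The heart of the proof is a quantitative Jensen inequality. Since $r>2$, the function $q(t):=(1+t)^{r/2}-1-\tfrac{r}{2}\,t$ is nonnegative on $(-1,\infty)$ and strictly convex there, with $q(0)=q'(0)=0$ and $q''(0)=\tfrac{r}{2}(\tfrac{r}{2}-1)>0$; hence $t\mapsto q(t)/t^2$ extends to a strictly positive continuous function on the compact interval $[-1,M-1]$, and by compactness there is $c=c(r,M)>0$ such that $q(t)\geq c\,t^2$ for all $t\in[-1,M-1]$. Applying this with $t=h_n-1\in[-1,M-1]$, integrating over $\Omega$, and using $\frac{1}{|\Omega|}\int_\Omega(h_n-1)\,dx=0$ to annihilate the linear term, we get
\begin{align*}
1+o(1)=\frac{1}{|\Omega|}\int_\Omega h_n^{r/2}\,dx\ \geq\ 1+\frac{c}{|\Omega|}\int_\Omega(h_n-1)^2\,dx\qquad\text{as }n\to\infty,
\end{align*}
so $\int_\Omega(h_n-1)^2\,dx\to0$, i.e. $g_n^2\to1$ in $L^2(\Omega)$. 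Since $g_n\geq0$, one has $(g_n-1)^2\leq(g_n-1)^2(g_n+1)^2=(g_n^2-1)^2$, hence $g_n\to1$ in $L^2(\Omega)$, that is $v_n\to\varepsilon^{-1/2}$ in $L^2(\Omega)$. For the remaining exponents: if $2\leq p<\infty$ then $|v_n-\varepsilon^{-1/2}|_p^p\leq|v_n-\varepsilon^{-1/2}|_\infty^{\,p-2}\,|v_n-\varepsilon^{-1/2}|_2^2\to0$ by Lemma~\ref{nonoptimal}, and if $1\leq p<2$ the claim follows from Hölder's inequality on the bounded set $\Omega$.

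The step I expect to demand the most care is the quantitative convexity bound $q(t)\geq c\,t^2$, and in particular checking that the uniform $L^\infty$-bound of Lemma~\ref{nonoptimal} cannot be dropped: in the range $2<r<4$ one has $q(t)/t^2\to0$ as $t\to\infty$, so confining $h_n-1$ to a bounded interval is genuinely necessary to obtain a \emph{uniform} quadratic lower bound. This is also precisely where the absence of operator-compactness (recall $s_n\to0$) is bypassed: instead of extracting a strongly convergent subsequence, one converts the vanishing convexity defect $\frac{1}{|\Omega|}\int_\Omega h_n^{r/2}\,dx-1\to0$ directly into $L^2$-smallness of $h_n-1$.
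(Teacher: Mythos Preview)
Your proof is correct, and it takes a genuinely different route from the paper's. The paper argues by weak compactness: it sets $w_n:=v_n^2$, extracts a weakly convergent subsequence in $L^{r/2}(\Omega)$, uses lower semicontinuity together with the minimizer characterization of Theorem~\ref{acmin} to identify the weak limit as $\varepsilon^{-1}\chi_\Omega$, and then upgrades to strong convergence via the Radon--Riesz property (weak convergence plus norm convergence in the uniformly convex space $L^{r/2}$). Your argument instead bypasses weak limits and subsequences entirely: the quantitative Jensen bound $q(t)\geq c\,t^2$ on $[-1,M-1]$ converts the vanishing convexity defect $\frac{1}{|\Omega|}\int_\Omega h_n^{r/2}\,dx-1\to0$ \emph{directly} into $\int_\Omega(h_n-1)^2\,dx\to0$. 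This is more elementary and in principle quantitative; it also makes transparent why the $L^\infty$-bound of Lemma~\ref{nonoptimal} is essential (your own remark that $q(t)/t^2\to0$ as $t\to\infty$ when $2<r<4$ is exactly the point). The paper's approach, by contrast, is more modular---it reuses Theorem~\ref{acmin} rather than re-proving a pointwise inequality---and would adapt more readily to settings where only integrability, not boundedness, of the minimizing sequence is known.
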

\begin{proof}
By Theorems \ref{infimum}, \ref{acaprox}, and the fact that $v_n\in\Sigma_n$,
\begin{align}
\frac{A}{r}|v_{n}^{2}|_{r/2}^{r/2}
=
\frac{A}{r}|v_{n}|_{r}^{r}
=\Theta_0-\frac{|v_{n}|_{2}^{2}}{2}+o(1)
=\Theta_0-\frac{|\Omega|}{2\varepsilon}+o(1)
=\frac{A}{r}\frac{|\Omega|}{\varepsilon^{r/2}}+o(1)\label{eq:ac10}
\end{align}
as $n\rightarrow\infty,$ which implies that the sequence $(w_{n})_{n\in\N}:=(v_{n}^{2})_{n\in\N}$ is bounded in $L^{r/2}(\Omega)$. Then, there is $w^{\ast}\in L^{r/2}(\Omega)$ such that, up to a subsequence, 
\begin{align}
w_{n}\rightharpoonup w^{\ast}\quad\mbox{in}~L^{r/2}(\Omega)\quad\text{ as }n\to\infty.
\label{eq:ac11}
\end{align}
In consequence, $|w^{\ast}|_{r/2}^{r/2}\leq\liminf\limits_{n\rightarrow\infty}|w_{n}|_{r/2}^{r/2}=\liminf\limits_{n\rightarrow\infty}|v_{n}|_{r}^{r}.$ Then, by Theorem~\ref{acaprox},
\begin{align}
\frac{|\Omega|}{2\varepsilon}+\frac{A}{r}|w^{\ast}|_{r/2}^{r/2}\leq \frac{|\Omega|}{2\varepsilon}+\liminf\limits_{n\rightarrow\infty}\left(\frac{A}{r}|v_{n}|_{r}^{r}\right)=\Theta_{0}.\label{eq:ac12}
\end{align}
By \eqref{eq:ac11}, for every open set $\mathcal{O}\subset\Omega$,
\begin{align}
0\leq \int_{\mathcal{O}}v_{n}^{2}\, dx=\int_{\Omega}v_{n}^{2}\chi_{\mathcal{O}}\, dx\rightarrow\int_{\Omega}w^{\ast}\chi_{\mathcal{O}}\, dx=\int_{\mathcal{O}}w^{\ast}\, dx.\label{eq:ac13}
\end{align}
Hence, $\int_{\mathcal{O}}w^{\ast}\, dx\geq 0$ for every open set $\mathcal{O}\subset\Omega$ and thus, Lebesgue's differentiation theorem yields that $w^{\ast}\geq 0$ a.e. in $\Omega$. Moreover, taking $\mathcal{O}=\Omega$ in \eqref{eq:ac13}, $|\Omega|\varepsilon^{-1}=\int_{\Omega}v_{n}^{2}\, dx\rightarrow\int_{\Omega}w^{\ast}\, dx.$  Therefore, $\int_{\Omega}|w^{\ast}|^{r/2}\, dx=|\sqrt{w^{\ast}}|_{r}^{r}$ and $\int_{\Omega}w^{\ast}\, dx=|\sqrt{w^{\ast}}|^{2}_{2}=|\Omega|\varepsilon^{-1}.$
Then, \eqref{eq:ac12} yields the inequality $
\frac{1}{2}|\sqrt{w^{\ast}}|_{2}^{2}+\frac{A}{r}|\sqrt{w^{\ast}}|_{r}^{r}\leq\Theta_{0},$ which implies that $\sqrt{w^{\ast}}\in L^{r}(\Omega)$  is a minimizer of the functional $J(u)$ with the restriction $G(u)-1=0$. Consequently, Theorem~\ref{acmin} yields that $\sqrt{w^{\ast}}=\varepsilon^{-1/2}\chi_{\Omega}$. From \eqref{eq:ac10} and \eqref{eq:ac11},
\begin{align}
v_{n}^{2}\rightharpoonup\frac{1}{\varepsilon}\quad\mbox{in}~L^{r/2}(\Omega)\qquad \text{ as }n\to\infty.\label{eq:alle1}
\end{align}
Since \eqref{eq:ac10} means that $|v_{n}^{2}|_{r/2}^{r/2}=\tfrac{\Omega}{\varepsilon^{r/2}}+o(1)$ as $n\rightarrow\infty$, this result together with \eqref{eq:alle1} implies that $v_{n}^{2}\rightarrow\varepsilon^{-1}$ in $L^{r/2}(\Omega)$ as $n\rightarrow\infty$. Finally, since $(v_{n})_{n}$ is bounded in $L^{\infty}(\Omega)$ and, up to a subsequence, $v_{n}\rightarrow\varepsilon^{-1/2}$ a.e. in $\Omega$, from the dominated convergence theorem it follows that $v_{n}\rightarrow\varepsilon^{-1/2}$ in $L^{p}(\Omega)$ for every $1\leq p<\infty$, as desired.  Since the limit is independent of the chosen subsequence, the convergence holds for the whole sequence, as claimed. 
\end{proof}
Finally, as a consequence of this last result, we can show that the bound obtained during the proof of Lemma~\ref{nonoptimal} can be improved.

\begin{corollary}
\label{bestacbound}
Let $(v_{n})_{n\in\N}$ be as in \eqref{vn}, then
\begin{align*}
0\leq v_{n}\leq\left(\frac{1}{A}+\varepsilon^{\tfrac{2-r}{2}}\right)^{\tfrac{1}{r-2}}+o(1)\quad\mbox{as}~n\rightarrow\infty.
\end{align*}
\end{corollary}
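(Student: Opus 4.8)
The plan is to sharpen the pointwise bound already extracted in the proof of Lemma~\ref{nonoptimal} by computing the precise limit of the Lagrange multipliers $\lambda_n$. Recall from that proof that each $v_n$ is a weak solution of
\begin{align*}
(-\Delta)^{s_n}v_n+Av_n^{r-1}=2\lambda_n|\Omega|^{-1}\varepsilon v_n\quad\text{in }\Omega,\qquad v_n=0\quad\text{in }\R^N\setminus\Omega,
\end{align*}
with $\lambda_n=\Theta_n+\tfrac{r-2}{2r}A|v_n|_r^r$ by \eqref{ln}, and that the test-function argument used in the proof of Proposition~\ref{infbound} --- which relies only on the fact that $v_n$ solves this equation --- yields
\begin{align*}
0\leq v_n\leq\Big(\tfrac{2\lambda_n|\Omega|^{-1}\varepsilon}{A}\Big)^{\frac{1}{r-2}}\qquad\text{a.e. in }\R^N,\quad\text{for every }n\in\N.
\end{align*}
Thus it remains only to identify $\lim_{n\to\infty}\lambda_n$.

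For this I would invoke Theorems~\ref{infimum}, \ref{acaprox}, and \ref{acconvergence}. By Theorems~\ref{acaprox} and \ref{infimum}, $\Theta_n\to\Theta_0=\tfrac{|\Omega|}{2\varepsilon}+\tfrac{A|\Omega|}{r\varepsilon^{r/2}}$, while Theorem~\ref{acconvergence} gives $v_n\to\varepsilon^{-1/2}$ in $L^r(\Omega)$, hence $|v_n|_r^r\to|\Omega|\varepsilon^{-r/2}$ (this is also recorded in \eqref{eq:ac10}). Substituting into $\lambda_n=\Theta_n+\tfrac{r-2}{2r}A|v_n|_r^r$,
\begin{align*}
\lambda_n\longrightarrow\frac{|\Omega|}{2\varepsilon}+\frac{A|\Omega|}{r\varepsilon^{r/2}}+\frac{r-2}{2r}\,\frac{A|\Omega|}{\varepsilon^{r/2}}=\frac{|\Omega|}{2\varepsilon}+\frac{A|\Omega|}{2\varepsilon^{r/2}},
\end{align*}
since $\tfrac1r+\tfrac{r-2}{2r}=\tfrac12$. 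Consequently $\tfrac{2\lambda_n|\Omega|^{-1}\varepsilon}{A}\to\tfrac{1}{A}+\varepsilon^{\frac{2-r}{2}}$, and since $t\mapsto t^{\frac{1}{r-2}}$ is continuous at this finite, positive limit, the displayed estimate above gives
\begin{align*}
0\leq v_n\leq\Big(\tfrac1A+\varepsilon^{\frac{2-r}{2}}\Big)^{\frac{1}{r-2}}+o(1)\qquad\text{as }n\to\infty,
\end{align*}
which is the claim.

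There is no genuine obstacle here: the argument is essentially bookkeeping built on Theorem~\ref{acconvergence}. The one point worth stating carefully is that the pointwise estimate $v_n\leq(2\lambda_n|\Omega|^{-1}\varepsilon/A)^{1/(r-2)}$ holds for \emph{every} $n$, so that the $o(1)$ in the conclusion is legitimate; this is the case because that estimate follows purely from the Euler--Lagrange equation satisfied by $v_n$, exactly as in the proof of Proposition~\ref{infbound}, and does not use the hypothesis $\eta>\lambda_{1,s_n}(\Omega)$ that appears there only to ensure existence of the solution.
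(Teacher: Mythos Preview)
Your proof is correct and follows essentially the same approach as the paper's own proof: both start from the pointwise bound $v_n\leq(2\lambda_n|\Omega|^{-1}\varepsilon/A)^{1/(r-2)}$ obtained via Proposition~\ref{infbound}, insert the expression \eqref{ln} for $\lambda_n$, and then use Theorems~\ref{infimum}, \ref{acaprox}, and \ref{acconvergence} to pass to the limit. Your closing remark that the pointwise bound holds for every $n$ (independently of the hypothesis $\eta>\lambda_{1,s_n}$ in Proposition~\ref{infbound}) is a careful observation that the paper leaves implicit.
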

\begin{proof}
By Proposition~\ref{infbound}, we have that $v_{n}\leq A^{\tfrac{1}{2-r}}\left(2\lambda_{n}|\Omega|^{-1}\varepsilon\right)^{\tfrac{1}{r-2}}$. Using \eqref{ln},
\begin{align*}
v_{n}\leq A^{\tfrac{1}{2-r}}\left\lbrace 2|\Omega|^{-1}\varepsilon\left(\Theta_{n}+\left(\frac{r-2}{2r}\right)A|v_{n}|_{r}^{r}\right)\right\rbrace^{\tfrac{1}{r-2}}.
\end{align*}
Since, by Theorem~\ref{acconvergence}, $|v_{n}|_r^r\to \varepsilon^{-r/2}|\Omega|$ , we have, by Theorems~\ref{infimum} and \ref{acaprox}, that
\begin{align*}
v_{n}
\leq A^{\tfrac{1}{2-r}}\left\lbrace 2|\Omega|^{-1}\varepsilon\left(\frac{|\Omega|}{2\varepsilon}+\frac{A|\Omega|}{2\varepsilon^{r/2}}+o(1)\right)\right\rbrace^{\tfrac{1}{r-2}}
= A^{\tfrac{1}{2-r}}\left(1+A\varepsilon^{\tfrac{2-r}{2}}\right)^{\tfrac{1}{r-2}}+o(1)\quad\text{as $n\rightarrow\infty$.}
\end{align*}

\end{proof}

The following is an easy calculation that will be useful for our next result. 
\begin{lemma}\label{Flem}
For $M,r>2$, $a\in[0,M]$, $b\geq 0$, $a\neq b$, let $F(a,b):=\frac{a^{r-2}-b^{r-2}}{a-b}$. There are $C=C(r,M)>0$ and $\alpha=\alpha(r)\geq 0$ such that $F(a,b)\geq C b^{\alpha}$.
\end{lemma}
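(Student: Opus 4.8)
The plan is to reduce the two–variable inequality to elementary one–variable estimates. Since the exponent $r-2$ is positive, $t\mapsto t^{r-2}$ is strictly increasing on $[0,\infty)$, so $a^{r-2}-b^{r-2}$ and $a-b$ always have the same sign and $F(a,b)>0$ throughout. Consequently it suffices to estimate the quotient $\frac{y^{r-2}-x^{r-2}}{y-x}$ from below for $0\le x<y$, and afterwards to distinguish the two cases $b=y$ and $b=x$. Writing $p:=r-2$, the argument then splits according to whether $p\ge 1$ (equivalently $r\ge 3$) or $0<p<1$ (equivalently $2<r<3$).

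\emph{Case $r\ge 3$, i.e.\ $p\ge 1$.} Here I would use the elementary inequality
\[
y^{p}-x^{p}\ \ge\ y^{p-1}(y-x)\qquad (0\le x\le y),
\]
which follows from $y^{p}-x^{p}-y^{p-1}(y-x)=x\bigl(y^{p-1}-x^{p-1}\bigr)\ge 0$ since $p-1\ge 0$. Dividing by $y-x>0$ gives $\frac{y^{p}-x^{p}}{y-x}\ge y^{p-1}$. If $b$ is the larger of the two arguments this already reads $F(a,b)\ge b^{p-1}=b^{r-3}$; if $b$ is the smaller one, then $y=a$ and $F(a,b)\ge a^{p-1}\ge b^{p-1}$ because $a>b\ge 0$ and $p-1\ge 0$. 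Hence $F(a,b)\ge b^{r-3}$ in all cases, so the claim holds with $C=1$ and $\alpha=r-3\ge 0$ (the bound $M$ is not even needed in this regime).

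\emph{Case $2<r<3$, i.e.\ $0<p<1$.} Now $t\mapsto t^{p}$ is concave, so the chord over $[x,y]$ lies below the tangent at the right endpoint; equivalently, for the function $\phi(x):=x^{p}-p\,y^{p-1}x$ one has $\phi'(x)=p\bigl(x^{p-1}-y^{p-1}\bigr)>0$ for $x<y$, hence $\phi$ is increasing on $[0,y]$ and $x^{p}-p\,y^{p-1}x\le \phi(y)=(1-p)y^{p}$, that is,
\[
y^{p}-x^{p}\ \ge\ p\,y^{p-1}(y-x)\qquad (0\le x\le y).
\]
Dividing by $y-x$ gives $F(a,b)\ge p\,y^{p-1}$ with $y=\max\{a,b\}$. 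Since $p-1<0$ and $0<y\le M$ (using that both arguments lie in $[0,M]$, and $a\neq b$ forces $y>0$), monotonicity of $t\mapsto t^{p-1}$ yields $y^{p-1}\ge M^{p-1}$, whence $F(a,b)\ge (r-2)M^{r-3}$. So the claim holds with $C=(r-2)M^{r-3}>0$ and $\alpha=0$.

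The one genuinely delicate point is the concave regime $2<r<3$. There the naive mean value estimate $F(a,b)=(r-2)\xi^{r-3}$ with $\xi$ between $a$ and $b$ is not directly usable: the exponent $r-3$ is negative, and a bound of the shape $(r-2)b^{r-3}$ would violate the requirement $\alpha\ge 0$. The remedy is to discard the information that $\xi$ may be small and instead bound $\xi$ from above by $M$, which is exactly what converts the estimate into the admissible form $C\,b^{0}$; this is where the boundedness of the arguments is used. Everything else is a routine check on the ordering of $a$ and $b$.
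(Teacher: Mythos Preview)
Your proof is correct and uses the same case split as the paper (convex regime $r\ge 3$ versus concave regime $2<r<3$), but your arguments in each branch are more elementary. For $r\ge 3$ the paper substitutes $z=a/b$ and appeals to continuity of $\frac{z^{r-2}-1}{(z-1)(z^{r-3}+1)}$ on $(0,\infty)$ to obtain $F(a,b)\ge C(a^{r-3}+b^{r-3})$; your one-line identity $y^{p}-x^{p}-y^{p-1}(y-x)=x(y^{p-1}-x^{p-1})\ge 0$ bypasses that and gives $C=1$, $\alpha=r-3$ directly, without using $M$. In the concave regime both proofs invoke concavity, though the paper argues via monotonicity of the secant slope in $a$ (so $F(a,b)\ge F(M,b)$) while you use the tangent-line inequality $y^{p}-x^{p}\ge p\,y^{p-1}(y-x)$; the conclusions coincide, with your constant $(r-2)M^{r-3}$ made explicit.

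One point worth flagging: in the concave case you write ``using that both arguments lie in $[0,M]$'', but the lemma as stated only bounds $a$, not $b$. This is not a defect of your argument so much as an imprecision in the statement itself: for $2<r<3$ and fixed $a\in[0,M]$, one has $F(a,b)\sim b^{r-3}\to 0$ as $b\to\infty$, so no bound $F\ge Cb^{\alpha}$ with $C>0$ and $\alpha\ge 0$ can hold without an upper bound on $b$. The paper's proof has the same implicit restriction (its lower bound $F(M,b)$ also tends to $0$ as $b\to\infty$), and in the application $b=v_n$ is uniformly bounded by Corollary~\ref{bestacbound}, so the lemma is being used only in the regime where your assumption is valid.
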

\begin{proof}
If $r-2>1$ and $z:=\tfrac{a}{b}$, then $\frac{F(a,b)}{a^{r-3}+b^{r-3}}=\frac{z^{r-2}-1}{(z-1)(z^{r-3}+1)}.$ Since $\lim_{z\rightarrow 1}\frac{z^{r-2}-1}{(z-1)(z^{r-3}+1)}=\frac{r-2}{2},$ we can find $C=C(r)>0$ such that $F(a,b)\geq C(r)(a^{r-3}+b^{r-3})$ for all $n\in\mathbb{N}$.

If $0<r-2<1$, then the function $f(y)=y^{r-2}$ is concave, which implies that $F(a,b)\geq F(M,b)$ for all $a<M$ and $b\in\mathbb{R},$ 
where $\lim\limits_{b\rightarrow M}F(M,b)=(r-2)M^{r-3}$. Therefore, there is $C_{0}=C_{0}(r,M)>0$ such that $F(a,b)\geq C_{0}>0$.
\end{proof}

We are ready to show the main result in this section.

\begin{theorem}
\label{ac1convergence}
Let $\eps>0,$ $A>0$, $r>2$, $\eta_0:=1+A\varepsilon^{\frac{2-r}{2}}$, and let $(s_n)_{n\in\N}\subset (0,1)$ be such that $\lim_{n\to\infty}s_n=0$.  For $n$ sufficiently large, the problem
\begin{align}
\label{eq:ac14}
(-\Delta)^{s_n}u_{n}+Au_{n}^{r-1}-\eta_0 u_n=0\quad \text{ in }\Omega, \qquad u_n=0\quad \text{ on }\R^N\backslash \Omega,
\end{align}
 has a unique positive solution $u_n\in \cH^{s_n}_0(\Omega)\cap L^{r}(\Omega)$. Moreover,
 \begin{align*}
\text{ $u_{{n}}\to \varepsilon^{-1/2}$ \qquad in $L^{p}(\Omega)$ as $n\rightarrow\infty$  for every $1\leq p<\infty$.}
 \end{align*}
\end{theorem}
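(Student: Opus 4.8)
The plan is to relate the (to-be-constructed) unique positive solution $u_n$ of \eqref{eq:ac14} to the minimizers $v_n$ from \eqref{vn}, whose limit has already been pinned down in Theorem~\ref{acconvergence}; the link between the two will be a stability estimate obtained by cross-testing the two equations.

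\emph{Existence and uniqueness.} First I would note that $\lambda_{1,s_n}(\Omega)\to 1$ as $n\to\infty$ by \cite[Theorem 1.1]{FJW20}, so, since $\eta_0=1+A\varepsilon^{\frac{2-r}{2}}>1$, there is $n_0$ with $\eta_0>\lambda_{1,s_n}(\Omega)$ for all $n\ge n_0$. For such $n$, Proposition~\ref{infbound} applied with $\eta=\eta_0$ and $s=s_n$ produces a positive weak solution $u_n\in\mathcal H_0^{s_n}(\Omega)\cap L^r(\Omega)$ of \eqref{eq:ac14} with $0\le u_n\le(\eta_0/A)^{\frac1{r-2}}$ a.e. in $\R^N$; then $(-\Delta)^{s_n}u_n=(\eta_0-Au_n^{r-2})u_n\ge 0$, so $u_n$ is a nontrivial nonnegative supersolution and hence $u_n>0$ a.e. in $\Omega$ by the strong maximum principle. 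For uniqueness I would use that $t\mapsto(\eta_0 t-At^{r-1})/t=\eta_0-At^{r-2}$ is strictly decreasing on $(0,\infty)$, and run the convexity-along-curves argument of \cite[Section~6]{BFMST18} on the $C^1$, coercive functional $u\mapsto\tfrac12\|u\|_{s_n}^2+\tfrac{A}{r}|u|_r^r-\tfrac{\eta_0}{2}|u|_2^2$ (coercive because $|u|_2^2\le\delta|u|_r^r+C_\delta$), exactly as in the proof of Theorem~\ref{exisleast}: two positive solutions coincide, and the unique positive solution is the global minimizer of this functional.

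\emph{Identifying the limit of the Lagrange multipliers.} Next, recall from Lemma~\ref{nonoptimal} that $v_n$ solves $(-\Delta)^{s_n}v_n+Av_n^{r-1}=\eta_n v_n$ in $\Omega$ with $\eta_n:=2\lambda_n|\Omega|^{-1}\varepsilon$ and $\lambda_n$ as in \eqref{ln}; as above $v_n\le(\eta_n/A)^{\frac1{r-2}}$ and $v_n>0$ a.e. in $\Omega$. Combining \eqref{ln} with Theorems~\ref{infimum} and \ref{acaprox}, and using that $|v_n|_r^r\to\varepsilon^{-r/2}|\Omega|$ (which follows from Theorem~\ref{acconvergence} and the uniform bound of Lemma~\ref{nonoptimal} by dominated convergence), one computes $\lambda_n\to\frac{|\Omega|}{2\varepsilon}\big(1+A\varepsilon^{\frac{2-r}{2}}\big)$, whence $\eta_n\to\eta_0$ as $n\to\infty$. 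In particular there is $M>0$ with $0\le u_n\le M$ and $0\le v_n\le M$ a.e. for all $n$ large.

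\emph{Stability estimate and conclusion.} Since $u_n,v_n>0$ a.e. in $\Omega$ and are bounded, the quotients $u_n^2/v_n$ and $v_n^2/u_n$ are admissible test functions in $\mathcal H_0^{s_n}(\Omega)$ (using Hopf-type lower bounds, or approximation by $u_n^2/(v_n+\delta)$ with $\delta\downarrow0$, in the spirit of \cite{BFMST18}). Inserting them in the fractional Picone inequality $\mathcal E_{s_n}(w,z^2/w)\le\mathcal E_{s_n}(z,z)$, evaluating both sides through the equations for $u_n$ and $v_n$, and subtracting, I expect the stability inequality
\begin{align*}
A\int_\Omega\big(u_n^{r-2}-v_n^{r-2}\big)\big(u_n^2-v_n^2\big)\,dx\ \le\ (\eta_0-\eta_n)\int_\Omega\big(u_n^2-v_n^2\big)\,dx .
\end{align*}
Its right-hand side is bounded by $|\eta_0-\eta_n|\big(|u_n|_2^2+|v_n|_2^2\big)$, which tends to $0$ since $\eta_n\to\eta_0$, $|v_n|_2^2=|\Omega|/\varepsilon$, and $|u_n|_2^2\le M^2|\Omega|$. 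The left-hand integrand is nonnegative, and writing $u_n^{r-2}-v_n^{r-2}=(u_n-v_n)F(u_n,v_n)$ with $F$ as in Lemma~\ref{Flem} and $u_n^2-v_n^2=(u_n-v_n)(u_n+v_n)$, Lemma~\ref{Flem} (with the uniform bound $M$) gives $C=C(r,M)>0$ with $\big(u_n^{r-2}-v_n^{r-2}\big)\big(u_n^2-v_n^2\big)\ge C|u_n-v_n|^{\max\{r,3\}}$ pointwise. Hence $u_n-v_n\to0$ in $L^{\max\{r,3\}}(\Omega)\hookrightarrow L^2(\Omega)$; together with Theorem~\ref{acconvergence} this yields $u_n\to\varepsilon^{-1/2}$ in $L^2(\Omega)$, and the uniform bound $0\le u_n\le M$ upgrades this to $L^p(\Omega)$ for every $1\le p<\infty$ via dominated convergence (along an a.e.-convergent subsequence; uniqueness of the limit then gives the full sequence).

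The main obstacle is the stability estimate: making the cross-testing rigorous (admissibility of $u_n^2/v_n$, resting on the positivity of $u_n,v_n$ and the approximation device from \cite{BFMST18}) and, once the inequality is in hand, extracting from Lemma~\ref{Flem} a lower bound $\big(u_n^{r-2}-v_n^{r-2}\big)\big(u_n^2-v_n^2\big)\gtrsim|u_n-v_n|^{\max\{r,3\}}$ that is \emph{uniform in $n$} — which is precisely why the common $L^\infty$-bound $M$ coming from Lemma~\ref{nonoptimal} and Proposition~\ref{infbound} is indispensable.
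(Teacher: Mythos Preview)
Your proposal is correct and reaches the same conclusion as the paper, but the core stability step is carried out by a genuinely different mechanism. After the common preliminaries (existence/uniqueness via \cite{BFMST18}, the bound from Proposition~\ref{infbound}, and the identification $\eta_n\to\eta_0$ from \eqref{ln} and Theorems~\ref{acaprox}, \ref{acconvergence}), the paper does \emph{not} use Picone. Instead it sets $w_n:=u_n-v_n$, tests the equation for $w_n$ with $w_n$ itself, and then introduces the linearized eigenvalue
\[
\mu_n=\inf_{v\neq 0}\frac{\|v\|_{s_n}^2+\int_\Omega (Au_n^{r-2}-\eta_0)v^2}{|v|_2^2},
\]
showing $\mu_n=0$ by testing its Euler--Lagrange equation against $u_n$ (this is the argument borrowed from \cite[Proposition~6.2]{BFS16}). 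The identity $\mu_n=0$ turns the tested difference equation into $A\int_\Omega F(u_n,v_n)\,w_n^2\,v_n\,dx\to 0$, and then Lemma~\ref{Flem} together with $v_n\to\varepsilon^{-1/2}$ is used to peel off the weight $v_n$ and conclude $|w_n|_2\to 0$.

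Your route via the fractional Picone inequality, cross-testing with $v_n^2/(u_n+\delta)$ and $u_n^2/(v_n+\delta)$ and letting $\delta\downarrow 0$, produces the cleaner two-sided estimate $A\int_\Omega(u_n^{r-2}-v_n^{r-2})(u_n^2-v_n^2)\le(\eta_0-\eta_n)\int_\Omega(u_n^2-v_n^2)$, whose right side tends to $0$ and whose integrand is pointwise bounded below by $C|u_n-v_n|^{\max\{r,3\}}$ (this follows from Lemma~\ref{Flem} combined with $a+b\ge|a-b|$; for $r\ge 3$ one may alternatively invoke $|a^{r-2}-b^{r-2}|\ge|a-b|^{r-2}$ directly). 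This avoids the auxiliary eigenvalue problem entirely and gives convergence in $L^{\max\{r,3\}}$ in one stroke. The trade-off is exactly the obstacle you flag: justifying that $v_n^2/(u_n+\delta)\in\mathcal H_0^{s_n}(\Omega)$ and passing $\delta\to 0$ requires the strict positivity of $u_n,v_n$ in $\Omega$ and a product/quotient rule in $H^{s_n}$, whereas the paper's linearization argument only ever tests with $w_n$ and with the principal eigenfunction $z_n$, both manifestly admissible. Either approach is valid; yours is more direct once Picone is in hand, the paper's is more self-contained.
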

\begin{proof}
Since $\lim_{n\to\infty}s_n=0$, by \eqref{asymp}, there is $n_0\in\mathbb N$ so that $\eta_0:=1+A\varepsilon^{\frac{2-r}{2}}>\lambda_{1,s_n}$ for all $n\geq n_0$.  Then, the existence and uniqueness of a positive solution $u_n\in \cH^{s_n}_0(\Omega)\cap L^{r}(\Omega)$ of \eqref{eq:ac14} follows by arguing as in \cite[Corollary~6.3]{BFMST18}.

Let $v_n$ and $\Theta_n$ be as \eqref{vn}, and $\lambda_n$ be as in \eqref{ln}.  In particular,
\begin{align}
(-\Delta)^{s_{n}}v_{n}+Av_{n}^{r-1}-\eta_n v_{n}=0\qquad \text{ in }\Omega,\qquad \eta_n:=2|\Omega|^{-1}\varepsilon\lambda_{n}.\label{eq:ac15}
\end{align}
By \eqref{ln} and Theorems \ref{acaprox} and \ref{acconvergence}, we have that $\eta_n\to \eta_0$ as $n\to\infty$. 
Let $w_{n}:=u_{n}-v_{n}$, then
\begin{align*}
(-\Delta)^{s_{n}}w_{n}+\left(Au_{n}^{r-2}-\eta_{0}\right)w_{n}=\left(\eta_{0}-\eta_{n}-A(u_{n}^{r-2}-v_{n}^{r-2})\right)v_{n}\quad \text{ in }\Omega,
\end{align*}
Define $F(a,b):=\frac{a^{r-2}-b^{r-2}}{a-b},$ and notice that $F>0$ for $a\neq b$, $a,b\geq 0$. Then,
\begin{align}
\|w_n\|_{s_{n}}^2+\int_{\Omega}\left(Au_{n}^{r-2}-\eta_{0}\right)w_{n}^{2}\, dx&=(\eta_{0}-\eta_{n})\int_{\Omega}v_{n}w_{n}\, dx-A\int_{\Omega}F(u_{n},v_{n})w_{n}^{2}v_{n}\, dx\nonumber\\
&\leq (\eta_{0}-\eta_{n})\int_{\Omega}v_{n}w_{n}\, dx=o(1),\label{eq:ac17}
\end{align}
because $\eta_{n}\rightarrow\eta_{0}$ as $n\rightarrow\infty$ and because $w_n,v_n\in L^\infty(\Omega)$, by Proposition~\ref{infbound} and Corollary \ref{bestacbound}. 

Now we argue as in \cite[Proposition 6.2]{BFS16}. By using standard arguments, the problem 
\begin{align}\label{ss}
\mu_{n}=\inf_{v\in\mathcal{H}_{0}^{s_{n}}(\Omega)\setminus\left\lbrace 0\right\rbrace}\frac{\|v\|_{s_{n}}^{2}+\int_{\Omega}\left(Au_{n}^{r-2}-\gamma_{0}\right)v^{2}\, dx}{|v|_{2}^{2}},
\end{align}
has a nontrivial non-negative solution $z_{n}\in\mathcal{H}_{0}^{s_{n}}(\Omega)$ for each $n\in\mathbb{N}$. In particular, $z_{n}$ is a weak solution of $
(-\Delta)^{s_{n}}z_{n}+\left(Au_{n}^{r-2}-\gamma_{0}\right)z_{n}=\mu_{n}z_{n}$ in $\Omega.$ Testing with $u_{n}$ and integrating by parts,
\begin{align}\label{sp}
0=\int_{\Omega}\left((-\Delta)^{s_{n}}u_{n}+\left(Au_{n}^{r-2}-\eta_{0}\right)u_{n}\right)z_{n}\, dx=\mu_{n}\int_{\Omega}z_{n}u_{n}\, dx,
\end{align}
by \eqref{eq:ac14}. Let us show that $\mu_{n}=0$.  By Proposition~\ref{infbound}, $u_n\leq (\eta_0/A)^\frac{1}{r-2}$, and then $(-\Delta)^{s_{n}}u_{n}=\left(\eta_{0}-Au_{n}^{r-2}\right)u_{n}\geq 0$ in $\Omega$; by \eqref{eq:ac00}, we can apply the strong maximum principle (see, for example, \cite{JW19}) to conclude that $u_{n}>0$ in $\Omega$. Since $z_{n}\geq 0$ and $z_n\neq 0$, \eqref{sp} implies that $\mu_{n}=0$. Then, by \eqref{eq:ac17} and the definition of $\mu_n$, 
\begin{align*}
0=\mu|w_n|_2^2\leq \|w_n\|_{s_n}^2+\int_\Omega (Au_n^{r-2}-\eta_0)w_n^2\, dx =o(1)-A\int_\Omega F_n(u_n,v_n)w_n^2v_n\, dx\leq o(1)
\end{align*}
as $n\to\infty.$ In particular, $\lim\limits_{n\rightarrow\infty}A\int_{\Omega}F(u_{n},v_{n})w_{n}^{2}v_{n}\, dx=0$. Since Proposition~\ref{infbound} guarantees the existence of a constant $M>0$ such that $u_{n}\leq M$ for all $n\in\mathbb{N}$, we have, by Lemma~\ref{Flem}, that there are $C_{1}=C_{1}(r,M)>0$ and $\alpha=\alpha(r)\geq 0$ such that $F(u_n,v_n)\geq C_{1}v_n^{\alpha}$. As a consequence, 
\begin{align}
\label{eq:ac18}
0=\lim_{n\rightarrow\infty}A\int_{\Omega}F(u_{n},v_{n})w_{n}^{2}v_{n}\, dx\geq C_1\lim_{n\rightarrow\infty}\int_{\Omega}v_{n}^{\alpha+1}w_{n}^{2}\, dx\geq 0,
\end{align}
that is, $\lim\limits_{n\rightarrow\infty}\int_{\Omega}v_{n}^{\alpha+1}w_{n}^{2}\, dx=0$. Furthermore, by Theorem~\ref{acconvergence} and dominated convergence, we have that  $\lim_{n\to\infty}\int_{\Omega}|1-\varepsilon^{\tfrac{\alpha+1}{2}} v_{n}^{\alpha+1}|\, dx=0$. By Proposition~\ref{infbound} and Corollary \ref{bestacbound}, there is $C>0$ such that $|w_n|^2_\infty<C$ and then
\begin{align*}
0\leq \int_{\Omega}w_{n}^{2}\, dx
\leq \int_{\Omega}(1-\varepsilon^{\tfrac{\alpha+1}{2}} v_{n}^{\alpha+1})w_{n}^{2}\, dx+\varepsilon^{\tfrac{\alpha+1}{2}}\int_{\Omega}v_{n}^{\alpha+1}w_{n}^{2}\, dx=o(1)
\end{align*}
as $n\to\infty$, \emph{i.e.}, $\lim\limits_{n\rightarrow\infty}\int_{\Omega}w_{n}^{2}\, dx=0.$ Finally,
\begin{align*}
\int_{\Omega}|u_{n}-\varepsilon^{-1/2}|^{2}\, dx\leq\int_{\Omega}\left(|w_{n}|+|v_{n}-\varepsilon^{-1/2}|\right)^{2}\, dx\leq 4\int_{\Omega}w_{n}^{2}+|v_{n}-\varepsilon^{-1/2}|^{2} dx\rightarrow 0\quad\mbox{as}~n\rightarrow\infty,
\end{align*}
which proves the result for $p=2$. The general case, $1\leq p<\infty$, now follows from the dominated convergence theorem since, by Proposition~\ref{infbound}, $(u_{n})_{n}$ is bounded in $L^{\infty}(\Omega)$.
\end{proof}

\begin{proof}[Proof of Theorem~\ref{thm:f}]
The proof follows directly from Theorem~\ref{ac1convergence} using $r=p+1$, $A=1,$ and $\eps=(k-1)^\frac{2}{2-r}=(k-1)^\frac{2}{1-p}$.
\end{proof}

\small{
\subsection*{Acknowledgments}
We thank the anonymous referees for their helpful comments and suggestions. We also thank Héctor Chang, Mónica Clapp, and Sven Jarohs for helpful comments and suggestions. F. Angeles is  supported by a CONACyT postdoctoral grant A1-S-10457 (Mexico). A. Saldaña is supported  by CONACYT grant A1-S-10457 (Mexico) and by UNAM-DGAPA-PAPIIT grants IA101721 and IA100923 (Mexico).

\subsection*{Conflicts of interests}
 The authors declare no conflicts of interests. 
}

{
\small

}


\begin{thebibliography}{10}

\bibitem{AJS16}
N.~Abatangelo, S.~Jarohs, and A.~Salda{\~n}a.
\newblock On the maximum principle for higher-order fractional {L}aplacians.
\newblock {\em arXiv:1607.00929}, 2016.

\bibitem{AB17}
H.~Antil and S.~Bartels.
\newblock Spectral approximation of fractional {PDE}s in image processing and
  phase field modeling.
\newblock {\em Comput. Methods Appl. Math.}, 17(4):661--678, 2017.

\bibitem{ABS21}
H.~Antil, S.~Bartels, and A.~Schikorra.
\newblock {Approximation of fractional harmonic maps}.
\newblock {\em IMA Journal of Numerical Analysis}, 07 2022.
\newblock drac029.

\bibitem{barrios2015critical}
B.~Barrios, E.~Colorado, R.~Servadei, and F.~Soria.
\newblock A critical fractional equation with concave--convex power
  nonlinearities.
\newblock In {\em Annales de l'Institut Henri Poincar{\'e} C, Analyse non
  lin{\'e}aire}, volume 32:4, pages 875--900. Elsevier, 2015.

\bibitem{bass94}
R.~F. Bass.
\newblock {\em Probabilistic techniques in analysis}.
\newblock Springer Science \& Business Media, 1994.

\bibitem{beghin14}
L.~Beghin.
\newblock Geometric stable processes and related fractional differential
  equations.
\newblock {\em Electronic Communications in Probability}, 19:1--14, 2014.

\bibitem{BFMST18}
D.~Bonheure, J.~F\"{o}ldes, E.~Moreira~dos Santos, A.~Salda\~{n}a, and
  H.~Tavares.
\newblock Paths to uniqueness of critical points and applications to partial
  differential equations.
\newblock {\em Trans. Amer. Math. Soc.}, 370(10):7081--7127, 2018.

\bibitem{BFS16}
D.~Bonheure, J.~F{\"o}ldes, and A.~Saldana.
\newblock Qualitative properties of solutions to mixed-diffusion bistable
  equations.
\newblock {\em Calculus of variations and partial differential equations},
  55(3):1--23, 2016.

\bibitem{BBM01}
J.~Bourgain, H.~Brezis, and P.~Mironescu.
\newblock Another look at {S}obolev spaces.
\newblock In {\em Optimal control and partial differential equation.
  Conference}, pages 439--455, 2001.

\bibitem{CDV17}
L.~Caffarelli, S.~Dipierro, and E.~Valdinoci.
\newblock A logistic equation with nonlocal interactions.
\newblock {\em Kinetic \& Related Models}, 10(1):141, 2017.

\bibitem{cantrell2004spatial}
R.~S. Cantrell and C.~Cosner.
\newblock {\em Spatial ecology via reaction-diffusion equations}.
\newblock John Wiley \& Sons, 2004.

\bibitem{CS22}
H.~Chang-Lara and A.~Saldaña.
\newblock Classical solutions to integral equations with zero order kernels.
\newblock {\em arXiv:2208.12841}, 2022.

\bibitem{CV22}
H.~Chen and L. Véron.
\newblock Bounds for eigenvalues of the Dirichlet problem for the logarithmic Laplacian.
\newblock {\em Advances in Calculus of Variations}, 2022.

\bibitem{CW19}
H.~{Chen} and T.~{Weth}.
\newblock {The {D}irichlet problem for the logarithmic Laplacian}.
\newblock {\em {Commun. Partial Differ. Equations}}, 44(11):1100--1139, 2019.

\bibitem{DRS17}
J.~D{\'a}vila, L.~L. R{\'\i}os, and Y.~Sire.
\newblock Bubbling solutions for nonlocal elliptic problems.
\newblock {\em Revista Matem{\'a}tica Iberoamericana}, 33(2):509--546, 2017.

\bibitem{DNPV12}
E.~Di~Nezza, G.~Palatucci, and E.~Valdinoci.
\newblock Hitchhiker's guide to the fractional {S}obolev spaces.
\newblock {\em Bull. Sci. Math.}, 136(5):521--573, 2012.

\bibitem{DIS22}
A.~Dieb, I.~Ianni, and A.~Saldaña.
\newblock Uniqueness and nondegeneracy for {D}irichlet fractional problems in
  bounded domains via asymptotic methods.
\newblock {\em arXiv:2207.11199}, 2022.

\bibitem{JF}
M.~M. Fall and S.~Jarohs.
\newblock Overdetermined problems with fractional {L}aplacian.
\newblock {\em ESAIM Control Optim. Calc. Var.}, 21(4):924--938, 2015.

\bibitem{FRRO16}
X.~Fern\'{a}ndez-Real and X.~Ros-Oton.
\newblock Boundary regularity for the fractional heat equation.
\newblock {\em Rev. R. Acad. Cienc. Exactas F\'{\i}s. Nat. Ser. A Mat. RACSAM},
  110(1):49--64, 2016.

\bibitem{feulefack2021logarithmic}
P.~A. Feulefack.
\newblock The logarithmic {S}chr\"{o}dinger operator and associated {D}irichlet
  problems.
\newblock {\em J. Math. Anal. Appl.}, 517(2):Paper No. 126656, 2023.

\bibitem{feulefack2021nonlocal}
P.~A. Feulefack and S.~Jarohs.
\newblock Nonlocal operators of small order.
\newblock {\em Annali di Matematica Pura ed Applicata},  (1923-): 1-29, 2022.

\bibitem{FJW20}
P.~A. Feulefack, S.~Jarohs, and T.~Weth.
\newblock Small order asymptotics of the {D}irichlet eigenvalue problem for the
  fractional {L}aplacian.
\newblock {\em J. Fourier Anal. Appl.}, 28(2):Paper No. 18, 44, 2022.

\bibitem{FKT20}
R.~L. Frank, T.~K\"{o}nig, and H.~Tang.
\newblock Classification of solutions of an equation related to a conformal log
  {S}obolev inequality.
\newblock {\em Adv. Math.}, 375:107395, 27, 2020.

\bibitem{giaquinta}
M.~Giaquinta and S.~Hildebrandt.
\newblock {\em Calculus of {V}ariations {I}}.
\newblock A series of comprehensive studies in mathematics. Springer, 2004.

\bibitem{HSS22}
V.~Hern{\'a}ndez~Santamar{\'\i}a and A.~Salda{\~n}a.
\newblock Small order asymptotics for nonlinear fractional problems.
\newblock {\em Calculus of Variations and Partial Differential Equations},
  61(3):1--26, 2022.

\bibitem{JSW20}
S.~Jarohs, A.~Salda\~{n}a, and T.~Weth.
\newblock A new look at the fractional {P}oisson problem via the logarithmic
  {L}aplacian.
\newblock {\em J. Funct. Anal.}, 279(11):108732, 50, 2020.

\bibitem{JW19}
S.~Jarohs and T.~Weth.
\newblock On the strong maximum principle for nonlocal operators.
\newblock {\em Mathematische Zeitschrift}, 293(1):81--111, 2019.

\bibitem{MR3626549}
M.~Kassmann and A.~Mimica.
\newblock Intrinsic scaling properties for nonlocal operators.
\newblock {\em J. Eur. Math. Soc. (JEMS)}, 19(4):983--1011, 2017.

\bibitem{LW21}
A.~Laptev and T.~Weth.
\newblock Spectral properties of the logarithmic Laplacian.
\newblock {\em Analysis and Mathematical Physics}, 11 : 1-24, 2021.¸

\bibitem{MSW19}
V.~Millot, Y.~Sire, and K.~Wang.
\newblock Asymptotics for the fractional {A}llen--{C}ahn equation and
  stationary nonlocal minimal surfaces.
\newblock {\em Archive for Rational Mechanics and Analysis}, 231(2):1129--1216,
  2019.

\bibitem{PV18}
B.~Pellacci and G.~Verzini.
\newblock Best dispersal strategies in spatially heterogeneous environments:
  optimization of the principal eigenvalue for indefinite fractional {N}eumann
  problems.
\newblock {\em J. Math. Biol.}, 76(6):1357--1386, 2018.

\bibitem{RSV17}
X.~Ros-Oton, J.~Serra, and E.~Valdinoci.
\newblock Pohozaev identities for anisotropic integrodifferential operators.
\newblock {\em Communications in Partial Differential Equations},
  42(8):1290--1321, 2017.

\bibitem{vsikic06}
H.~{\v{S}}iki{\'c}, R.~Song, and Z.~Vondra{\v{c}}ek.
\newblock Potential theory of geometric stable processes.
\newblock {\em Probability Theory and Related Fields}, 135(4):547--575, 2006.

\bibitem{SV17}
J.~Sprekels and E.~Valdinoci.
\newblock A new type of identification problems: optimizing the fractional
  order in a nonlocal evolution equation.
\newblock {\em SIAM J. Control Optim.}, 55(1):70--93, 2017.

\bibitem{Sti19}
P.~R. Stinga.
\newblock User's guide to the fractional {L}aplacian and the method of
  semigroups.
\newblock In {\em Handbook of fractional calculus with applications. {V}ol. 2},
  pages 235--265. De Gruyter, Berlin, 2019.

\bibitem{W97}
M.~Willem.
\newblock {\em Minimax Theorems}, volume~24.
\newblock Springer Science \& Business Media, 1997.

\end{thebibliography}
\end{document}